\documentclass{amsart}
\usepackage{amssymb}
\usepackage{latexsym}
\usepackage{amsmath}
\usepackage{enumerate}

\newtheorem{theorem}{Theorem}[section]
\newtheorem{lemma}[theorem]{Lemma}
\newtheorem{corollary}[theorem]{Corollary}
\newtheorem{proposition}[theorem]{Proposition}

\theoremstyle{definition}
\newtheorem{definition}[theorem]{Definition}

\numberwithin{equation}{section}

%    Absolute value notation

%    Blank box placeholder for figures (to avoid requiring any
%    particular graphics capabilities for printing this document).

\title{The $\mathsf{HOD}$ Hypothesis and a supercompact cardinal}
\author{Yong Cheng}
%    Address of record for the research reported here
\address{School of Philosophy,
Wuhan University, Wuhan, Hubei Province, P.R.China, 430072}
%    Current address
%\curraddr{Kolde-Ring 60, Room 5, 48151 Muenster}
\email{world-cyr@hotmail.com}
%    \thanks will become a 1st page footnote.
\thanks{I am indebted to W. Hugh Woodin for our communications about the idea of his work, his patience to
answer my questions about the HOD Hypothesis, and in particular the discussions of his Local Universality Theorem. I should
like to thank the referees for their helpful suggestions and corrections for improvement, David Asper\'{o} for suggestions on the
improvement of Section 2, the Chinese Ministry of Education for their support via the Humanities and Social Sciences Planning
Fund (project no. 17YJA72040001) and Wuhan University in China for the support via the Luo Jia young scholar research
project and the Young Scholar Academic Team Project.}

\subjclass[2000]{03E55, 03E99}

\keywords{The $\mathsf{HOD}$ Hypothesis, Supercompact cardinal, $\mathsf{HOD}$-supercompact cardinal}

\begin{document}

\begin{abstract}
In this paper, we prove that: if $\kappa$ is supercompact and the $\mathsf{HOD}$ Hypothesis holds, then there is a proper class of regular cardinals in $V_{\kappa}$ which are measurable in $\mathsf{HOD}$.  From \cite{Woodin 5}, Woodin also proved this result. As a corollary, we prove Woodin's Local Universality Theorem.
This work shows that under the assumption of the $\mathsf{HOD}$ Hypothesis and supercompact cardinals, large cardinals in $\mathsf{V}$ are reflected to be large cardinals in $\mathsf{HOD}$ in a local way, and reveals the huge difference between $\mathsf{HOD}$-supercompact cardinals and supercompact cardinals under the $\mathsf{HOD}$ Hypothesis.
\end{abstract}

\maketitle 
\section{Introduction}

The $\mathsf{HOD}$ Hypothesis is an important hypothesis about $\mathsf{HOD}$ proposed by W.Hugh Woodin in \cite{Woodin new}, which says that there is a proper class of regular cardinals that are not $\omega$-strongly measurable in $\mathsf{HOD}$ (see Definition \ref{strongly measurable} and Definition \ref{HOD conjecuture}). In \cite{Woodin 2}, Woodin uses the term ``The $\mathsf{HOD}$ Conjecture" to denote the same statement as the $\mathsf{HOD}$ Hypothesis. For this paper, our main references are  \cite{Woodin 2} and \cite{Woodin new}; all basic facts about the $\mathsf{HOD}$ Hypothesis we know are in \cite{Woodin 2} and \cite{Woodin new}. Our notations are standard, see \cite{Jech} and \cite{Set theory}.

Examining under which hypothesis $\mathsf{HOD}$ and $\mathsf{V}$ are close to each other and how $\mathsf{HOD}$ and $\mathsf{V}$ can be pushed apart via forcing is a very interesting area of research. From \cite{joint work with Joel}, via forcing, behaviors of large cardinals from $\mathsf{V}$ can become disordered in $\mathsf{HOD}$. A natural question is whether the $\mathsf{HOD}$ Hypothesis has some effect on the behavior of large cardinals from $\mathsf{V}$ in $\mathsf{HOD}$.  We want to know whether under the $\mathsf{HOD}$ Hypothesis, behaviors of large cardinals from $\mathsf{V}$ become more regular in  $\mathsf{HOD}$; Especially, whether and how, under the $\mathsf{HOD}$ Hypothesis, large cardinals in $\mathsf{V}$ can be transferred into $\mathsf{HOD}$. In this paper, we answer this question for one supercompact cardinal and prove the following main result: if $\kappa$ is supercompact and the $\mathsf{HOD}$ Hypothesis holds, then there is a proper class of regular cardinals below $\kappa$ which are measurable in $\mathsf{HOD}$. From \cite{Woodin 5}, Woodin also proved this theorem. Woodin proved the Global Universality Theorem in \cite[Theorem 201]{Woodin 2} and  announced his Local Universality Theorem in \cite{Woodin 5}. As a corollary of the above main result, we have Woodin's Local Universality Theorem (see Corollary \ref{Local Universality Theorem}).

This paper is organized in the following way: In Section 2, we discuss the three main motivations for the $\mathsf{HOD}$ Hypothesis; In Section 3, we give a systematical and self-contained introduction to the $\mathsf{HOD}$ Hypothesis and its basic facts which would be used in later passages; In Section 4, we prove our main result Theorem \ref{main result of the paper}; In Section 5, we conclude with some natural and interesting questions.

\section{The Motivation of the $\mathsf{HOD}$ Hypothesis}

The inner model program for one supercompact cardinal, the limits of the large cardinal hierarchy and the $\mathsf{HOD}$ Dichotomy Theorem are the three main motivations for the $\mathsf{HOD}$ Hypothesis.

\begin{enumerate}[(1)]
\item Inner model theory has a long and complex history, starting with Jensen's work on $\mathsf L$ from the 1960's. There is a large variety of inner models (by `inner models' we mean transitive models of $\mathsf{ZFC}$ containing all the ordinals), and one natural classification criterion for them is their structural simplicity and their invariance with respect to extensions of the universe via forcing. In one extreme we have $\mathsf L$. $\mathsf L$ has a well--understood fine structure and all models of set theory with the same ordinals have exactly the same version of $\mathsf L$. It follows that we can decide most natural questions in mathematics by working in $\mathsf L$ (more accurately put, by working in the theory $\mathsf{ZFC}$ + $\mathsf{V=L}$). In the other extreme, we have the universe, $\mathsf V$, which in a typical theory $T$ of the form $\mathsf{ZFC}$ + large cardinals is quite underdetermined.\footnote{There will be for example forcing extensions satisfying this same theory $T$ but disagreeing with the ground model about the truth value of, for example, the Continuum Hypothesis.}  $\mathsf{HOD}$ is also, to a large extent, such an underdetermined inner model. Given the above, it would seem that $\mathsf L$ would be a natural choice for our universe. $\mathsf L$ has a serious drawback, though, which is that it can contain only very weak large cardinals.

The main goal of inner model theory is to build, under suitable assumptions,\footnote{For example, but not only, under the assumption that the relevant large cardinal axiom holds in $\mathsf V$.} inner models containing suitable large cardinals but with as many of the nice structural properties of $\mathsf L$ as possible (in particular, it would be desirable to be able to run a `fine--structural' analysis of these models).  Also, these inner models would be typically supposed to be as small as possible (in the sense of containing, besides all ordinals, just the bare minimum amount of information that would enable them to accommodate the large cardinal hypothesis at hand). Inner models of large cardinals in this sense are always so--called \emph{extender models}, i.e., models constructed in the same way as $\mathsf L$ but incorporating in the construction certain (carefully chosen) approximations to the relevant elementary embeddings that we would like the final model to capture.  The strongest large cardinal hypothesis within reach of the present inner model theory is some accumulation of Woodin cardinals. This is much stronger than, say, the existence of a measurable, but far weaker than, for example, the existence of a supercompact cardinal.\footnote{Supercompact cardinals figure prominently in many consistency proofs in higher set theory; famously, in the consistency proof of the maximal forcing axiom Martin's Maximum due to Foreman--Magidor--Shelah in 1984, for example.}

A surprising fact due to Woodin is that if the inner model program can be extended to prove that if there is a supercompact cardinal then there is a so--called $\mathsf{L}$--like weak extender model for a supercompact cardinal, then that $\mathsf{L}$--like model accommodates all large cardinal axioms that have ever been considered\footnote{Woodin uses the term \emph{Ultimate--$\mathsf{L}$} to refer to the hypothetical inner model that includes supercompact cardinals and therefore all large cardinals.This Ultimate--$\mathsf{L}$ would be robust enough with respect to forcing that one would be able to answer essentially all natural questions by working in $\mathsf{V}=$ Ultimate--$\mathsf{L}$. This would make a very strong case for adopting the axiom $\mathsf{V}=$ Ultimate--$\mathsf{L}$. The construction of this Ultimate--$\mathsf{L}$, if possible, would be a natural culmination of the inner model program the way it is currently understood.}  and is close to $\mathsf{V}$ in a certain well--defined sense. If that construction of an $\mathsf{L}$--like model is definable (so the model is contained in $\mathsf{HOD}$), then $\mathsf{HOD}$ must necessarily be close to $\mathsf{V}$ in the relevant sense, and in particular the $\mathsf{HOD}$ Hypothesis must be true.
Therefore, if the inner model program can be extended to the level of one supercompact cardinal, then the $\mathsf{HOD}$ Hypothesis must be true (if there is a supercompact cardinal).  This motivates the $\mathsf{HOD}$ Hypothesis, in the sense that  the $\mathsf{HOD}$ Hypothesis is a good test question for the success of the inner model program for one supercompact cardinal and, by the comments above, for the happy conclusion of the inner model program.

\item If the $\mathsf{HOD}$ Hypothesis is provable, then one can in a natural hierarchy of large cardinal axioms give a threshold for inconsistency, against just $\mathsf{ZF}$ as background theory, which closely parallels Kunen's inconsistency in the $\mathsf{ZFC}$ context.

\begin{theorem}\label{Kunen theorem}
(Woodin, \cite{Woodin 2})\quad ($\mathsf{ZF}$) \quad Assume ``$\mathsf{ZFC} \,+$ there is a supercompact cardinal" implies the $\mathsf{HOD}$ Hypothesis. Suppose $\delta$ is an extendible cardinal and $\lambda > \delta$. Then there is no non-trivial elementary embedding $j: \mathsf{V}_{\lambda+2} \rightarrow \mathsf{V}_{\lambda+2}$.
\end{theorem}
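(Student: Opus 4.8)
The plan is to argue by contradiction at the level of consistency: suppose $M \models \mathsf{ZF}$ has an extendible cardinal $\delta$, an ordinal $\lambda > \delta$, and a non-trivial elementary embedding $j \colon \mathsf{V}_{\lambda+2} \to \mathsf{V}_{\lambda+2}$. Working inside $M$ I would then manufacture a model of ``$\mathsf{ZFC}$ $+$ there is a supercompact cardinal $+$ the $\mathsf{HOD}$ Hypothesis fails'', contradicting the standing hypothesis that ``$\mathsf{ZFC}$ $+$ there is a supercompact cardinal'' provably implies the $\mathsf{HOD}$ Hypothesis. First I would record the $\mathsf{ZF}$-provable consequences of the existence of $j$: with $\kappa_0 = \operatorname{crit}(j)$ and $\kappa_{n+1} = j(\kappa_n)$, the critical sequence is cofinal in $\lambda$, so $\lambda$ is a strong limit cardinal of cofinality $\omega$ with $j(\lambda) = \lambda$, and Kunen's argument shows the axiom of choice fails, so in particular $\mathsf{HOD} \neq \mathsf{V}$.

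Since $\delta$ is extendible it is supercompact, and I would invoke the $\mathsf{HOD}$ Dichotomy at $\delta$ (whose proof only needs the extendibility embeddings at $\delta$, hence survives the absence of full choice in $M$): either (i) $\mathsf{HOD}$ is a weak extender model for the supercompactness of $\delta$, so that $\mathsf{HOD}$ is ``close'' to $\mathsf{V}$ above $\delta$, $\delta$ is supercompact in $\mathsf{HOD}$, and no regular cardinal $\geq \delta$ is $\omega$-strongly measurable in $\mathsf{HOD}$; or (ii) every regular cardinal $\geq \delta$ is $\omega$-strongly measurable in $\mathsf{HOD}$. The goal is to reach a contradiction in each horn, using $j$.

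In horn (i) I would aim for an outright contradiction inside $\mathsf{HOD}$ itself: from the agreement of $\mathsf{HOD}$ and $\mathsf{V}$ above $\delta$ on cofinalities and on successors of singular cardinals, together with the $I_1$-type reflection that $j$ imposes on $\mathsf{V}_{\lambda+1}$ (and hence, via the weak extender model property, on $(\mathsf{V}_{\lambda+1})^{\mathsf{HOD}}$), I would try to push a suitable restriction or derived form of $j$ down to a non-trivial elementary embedding of $(\mathsf{V}_{\bar\lambda+2})^{\mathsf{HOD}}$ into itself for an appropriate $\bar\lambda > \delta$; since $\mathsf{HOD} \models \mathsf{ZFC}$, this contradicts Kunen's Theorem inside $\mathsf{HOD}$. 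In horn (ii) the $\mathsf{HOD}$ Hypothesis already fails above $\delta$ in $M$, and I would pass to an inner model $N$ with $\mathsf{HOD} \subseteq N \subseteq \mathsf{V}$, $N \models \mathsf{ZFC}$, obtained by adjoining to $\mathsf{HOD}$ just enough of the extendibility data at $\delta$ that $\delta$ (or some cardinal above it) is supercompact in $N$, while $N$ is kept sufficiently $\mathsf{HOD}$-like that $\mathsf{HOD}^{N}$ still sees every regular cardinal $\geq \delta$ as $\omega$-strongly measurable; then $N$ is the desired model of $\mathsf{ZFC}$ $+$ supercompact $+$ $\neg\,\mathsf{HOD}$ Hypothesis.

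I expect the horn-(i) reduction to be the main obstacle: extracting a genuine elementary self-map of a rank-initial segment of $\mathsf{HOD}$ from the mere closeness of $\mathsf{HOD}$ and $\mathsf{V}$ is delicate, since closeness is a statement about cardinal arithmetic and cofinalities rather than about embeddings, so one must exploit the very strong structural constraints that a non-trivial $j \colon \mathsf{V}_{\lambda+2} \to \mathsf{V}_{\lambda+2}$ places on $\mathsf{V}_{\lambda+1}$ to recover it. A secondary difficulty is to guarantee, in horn (ii), that the constructed $N$ genuinely falsifies the $\mathsf{HOD}$ Hypothesis---that is, that $\mathsf{HOD}^{N}$ inherits the relevant $\omega$-strong measurability from $\mathsf{HOD}^{M}$ despite $N$ having been enlarged---which forces one to be careful about exactly how much of the extendibility structure is absorbed into $N$. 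A cleaner route, if available, would be to show directly that the presence of $j$ forces horn (ii), leaving only the inner-model construction $N$ to be carried out.
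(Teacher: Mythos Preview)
The paper does not prove this theorem; it is quoted in Section~2 as a result of Woodin from \cite{Woodin 2}, used solely to motivate the $\mathsf{HOD}$ Hypothesis. There is therefore no proof in the paper against which to compare your proposal.

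On the proposal itself: the dichotomy architecture is the right shape, but both horns are left at the level of intention rather than argument, and you correctly flag the gaps yourself. In horn~(i), ``closeness'' in the sense of cardinal arithmetic and cofinalities does not by itself let you restrict $j$ to $\mathsf{HOD}$; the actual mechanism is the Universality Theorem for weak extender models (Theorem~\ref{general universality theorem} in this paper), which says that any elementary embedding of rank-initial segments of $\mathsf{HOD}$ with critical point $\geq \delta$ already belongs to $\mathsf{HOD}$. One then arranges an iterate or restriction of $j$ with critical point above $\delta$ and obtains the Kunen contradiction inside $\mathsf{HOD}$, which models $\mathsf{ZFC}$. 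Your sketch gestures toward this but does not name the absorption principle that makes it work. In horn~(ii), the construction of $N$ is the entire content of the argument, and ``adjoin to $\mathsf{HOD}$ just enough extendibility data while keeping $\mathsf{HOD}^{N}$ under control'' is a specification, not a construction: nothing in the proposal explains how $\mathsf{HOD}^{N}$ is to be computed or why it inherits the $\omega$-strong measurability from $\mathsf{HOD}^{M}$. You should also justify explicitly why the $\mathsf{HOD}$ Dichotomy is available in $\mathsf{ZF}$ at an extendible cardinal; Theorem~\ref{HOD Dichotomy} as stated here is the $\mathsf{ZFC}$ version, and the $\mathsf{ZF}$ analogue is a separate (and nontrivial) component of Woodin's analysis.
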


It is a matter of fact that most large cardinal hypotheses can be naturally stated in terms of the existence of elementary embeddings of the form $j:(\mathsf{V}, \in)\longrightarrow (\mathsf M, \in)$ different from the identity, where $\mathsf M$ is some  transitive model. The closer the structure $\mathsf M$ is to $\mathsf V$, the stronger is the large cardinal situation posited. Usually, the relevant large cardinal is the critical point of $j$ (i.e., the least ordinal $\kappa$ such that $\kappa < j(\kappa)$). The above has been traditionally a general template for generating large cardinal axioms and explains, in many cases, why most large cardinal axioms considered to date tend to build a linearly ordered hierarchy with respect to consistency strength.\footnote{Typically, if $j:(\mathsf V, \in)\longrightarrow (\mathsf M, \in)$ is an elementary embedding with critical point $\kappa$ and $\mathsf M$ is ``sufficiently close to $\mathsf V$'',  then $\mathsf M$ thinks that $\kappa$ is the critical point of an elementary embedding $i:(\mathsf M, \in)\longrightarrow (\mathsf N, \in)$  in which the target model $\mathsf N$ is in principle ``less close to $\mathsf M$'' than $\mathsf M$ was to $\mathsf V$. By elementarily of $j$ and since $\kappa < j(\kappa)$ it follows that, in $\mathsf V$, there is a $\lambda < \kappa$ which is the critical point of an elementary embedding $i:(\mathsf V, \in)\longrightarrow (\mathsf N, \in)$, where $\mathsf N$ has the second (weaker) degree of closure relative to $\mathsf V$. } For example, $\kappa$ is a supercompact cardinal if and only if for every ordinal $\lambda$ it holds that $\kappa$ is the critical point of some elementary embedding $j:(\mathsf V, \in)\longrightarrow (\mathsf M, \in)$, where $\mathsf M$ is a transitive class closed under $\lambda$--sequences (i.e., for every sequence $(a_i\,:\,i<\lambda)$, if each $a_i$ is in $\mathsf M$, then $(a_i\,:\,i<\lambda)\in \mathsf M$). A natural upper limit for large cardinal axioms given by the above template is therefore the situation where $\mathsf M$ is actually all of $\mathsf V$; in other words, the statement that there is an elementary embedding $j: (\mathsf V, \in)\longrightarrow (\mathsf V, \in)$ which is not the identity.\footnote{The way it is expressed here, this is a second order statement, but there are various ways to make sense of this in a first order way.} The existence of such an elementary embedding was proposed by W.\ Reinhardt in his doctoral dissertation from 1967. A few years later (in 1971), Kenneth Kunen proved in a landmark result that such elementary embeddings cannot possibly exist. One hypothesis used crucially in Kunen's proof -- and in all other alternative proofs found afterwards -- is that $\mathsf V$ satisfies the Axiom of Choice. In fact, after more than $40$ years it is not yet known whether the nonexistence of a non-trivial elementary embedding $j:(\mathsf V, \in)\longrightarrow (\mathsf V, \in)$ can be proved just assuming $\mathsf V\models\mathsf{ZF}$.\footnote{As a matter of fact, no other inconsistency in the realm of large cardinal hypotheses has been discovered. It could well be that the existence of a non--trivial elementary embedding $j:(\mathsf V, \in)\longrightarrow (\mathsf V, \in)$ in the absence of choice is consistent. There could even be a rich hierarchy of consistent large cardinal hypotheses extending the hypothesis that there is such an elementary embedding, and therefore incompatible with choice; this would indicate that the Axiom of Choice eventually fails as we climb up the large cardinal hierarchy in very much the same way that $\mathsf V = \mathsf L$ fails as we climb up the large cardinal hierarchy (specifically, when we pass the $0^\sharp$ barrier).}
Theorem \ref{Kunen theorem} suggests that proving the $\mathsf{HOD}$ Hypothesis would have a huge foundational significance, in that it would provide a route to showing that there are no nontrivial elementary embeddings from $\mathsf{V}$ to $\mathsf{V}$ even if $\mathsf{AC}$ fails.

\item  The $\mathsf{HOD}$ Dichotomy says that either $\mathsf{HOD}$ is close to $\mathsf{V}$ or else $\mathsf{HOD}$ is far from $\mathsf{V}$.

\begin{theorem}\label{HOD Dichotomy}
(Woodin, $\mathsf{HOD}$ Dichotomy Theorem, Theorem 2 in \cite{Woodin 1}) \quad  Assume that $\delta$ is an extendible cardinal. Then exactly one of the following holds.
\begin{enumerate}[(a)]
  \item For every singular cardinal $\gamma>\delta$, $\gamma$ is singular in $\mathsf{HOD}$ and $\gamma^{+}=(\gamma^{+})^{\mathsf{HOD}}$.
  \item Every regular cardinal greater than $\delta$ is measurable in $\mathsf{HOD}$.
\end{enumerate}
\end{theorem}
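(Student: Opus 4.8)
The plan is to pivot on whether $\mathsf{HOD}$ is a \emph{weak extender model for the supercompactness of $\delta$}: that for every $\lambda>\delta$ there is a $\delta$-complete normal fine ultrafilter $U$ on $P_{\delta}(\lambda)$ with $P_{\delta}(\lambda)\cap\mathsf{HOD}\in U$ and $U\cap\mathsf{HOD}\in\mathsf{HOD}$. I would show that if $\mathsf{HOD}$ has this property then (a) holds, and that if it does not then (b) holds. Since ``is a weak extender model / is not one'' is exhaustive, at least one of (a), (b) follows; and they are incompatible, because under (b), for any singular cardinal $\gamma>\delta$ the ordinal $\gamma^{+}$ is a regular cardinal above $\delta$, hence measurable and so a limit cardinal of $\mathsf{HOD}$, whereas $(\gamma^{+})^{\mathsf{HOD}}$ is always a \emph{successor} cardinal of $\mathsf{HOD}$; thus $(\gamma^{+})^{\mathsf{HOD}}\neq\gamma^{+}$ and (a) fails. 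Hence exactly one of (a), (b) holds. This is in essence Woodin's argument, and two of its ingredients are substantial theorems that I would cite rather than reprove.

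For ``weak extender model $\Rightarrow$ (a)'' I would invoke Woodin's Universality Theorem: if $N$ is a weak extender model for the supercompactness of $\delta$, then $N$ is universal above $\delta$, and in particular every $\mathsf{V}$-singular cardinal $\gamma>\delta$ is singular in $N$ and $(\gamma^{+})^{N}=\gamma^{+}$. The idea is that such an $N$ absorbs all sufficiently short extenders and club-generated measures acting above $\delta$, so a failure of singularity or of successor computation at some $\gamma$ would be witnessed by such an object and contradict the absorption. The full proof belongs to the theory of suitable extender models and is not where the content of the dichotomy lies.

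The core is the other direction, which I would establish contrapositively: \emph{if some regular cardinal $\kappa>\delta$ is not $\omega$-strongly measurable in $\mathsf{HOD}$, then $\mathsf{HOD}$ is a weak extender model for the supercompactness of $\delta$.} Fix $\lambda>\delta$. Using the extendibility of $\delta$, choose an elementary embedding $j\colon\mathsf{V}_{\eta}\to\mathsf{V}_{\xi}$ with critical point $\delta$, with $j(\delta)>\eta$, and with $\eta$ well above $\lambda$ and $\kappa$. Then $U_{0}=\{X\subseteq P_{\delta}(\lambda):j[\lambda]\in j(X)\}$ is a $\delta$-complete normal fine ultrafilter on $P_{\delta}(\lambda)$, and one must show that (a suitable such) $U_{0}$ concentrates on $\mathsf{HOD}$, i.e. $P_{\delta}(\lambda)\cap\mathsf{HOD}\in U_{0}$, and --- the genuinely delicate point, which I expect to be the main obstacle --- that $U_{0}$ is amenable to $\mathsf{HOD}$, i.e. $U_{0}\cap\mathsf{HOD}\in\mathsf{HOD}$, even though $j$, and hence $U_{0}$, need not be ordinal-definable. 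This is exactly where the failure of $\omega$-strong measurability of $\kappa$ is used: the stationary partitions of $\{\alpha<\kappa:\operatorname{cf}(\alpha)=\omega\}$ that exist inside $\mathsf{HOD}$ (because $\kappa$ is not $\omega$-strongly measurable there) supply enough internal rigidity to recover $U_{0}\cap\mathsf{HOD}$, and a reflection argument off the extendibility embeddings passes from the single $\kappa$ to every $\lambda>\delta$.

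Granting this, the dichotomy closes. If $\mathsf{HOD}$ is not a weak extender model for the supercompactness of $\delta$, then by the contrapositive every regular cardinal $\kappa>\delta$ \emph{is} $\omega$-strongly measurable in $\mathsf{HOD}$; and every $\omega$-strongly measurable cardinal of $\mathsf{HOD}$ is measurable in $\mathsf{HOD}$ --- from a witnessing $\theta<\kappa$ with $(2^{\theta})^{\mathsf{HOD}}<\kappa$ and the non-splittability of the $\omega$-club filter on $\kappa$, a Solovay-style argument carried out inside $\mathsf{HOD}$ produces a $\kappa$-complete ultrafilter on $\kappa$. So (b) holds. Combined with the first direction and the incompatibility noted above, exactly one of (a), (b) holds. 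The two places requiring real work are the Universality Theorem (which I would cite) and, above all, the amenability argument of the previous paragraph.
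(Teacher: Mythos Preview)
The paper does not give a proof of this theorem; it appears in Section~2 purely as motivation, cited from \cite{Woodin 1}, so there is no proof in the paper to compare your argument against. Your outline is correct and is precisely Woodin's argument, and in fact the paper's Section~3 later collects (as cited results, without full proofs) exactly the ingredients you name: Theorem~\ref{property of weak extender model} for ``weak extender model $\Rightarrow$ (a)'', Theorem~\ref{strong mc imply mc} for ``$\omega$-strongly measurable in $\mathsf{HOD}$ $\Rightarrow$ measurable in $\mathsf{HOD}$'', and Theorems~\ref{extendible imply sc}, \ref{HOD partion pair} and the reflection step in the proof of Theorem~\ref{equi be HC and MC} for the contrapositive direction via the stationary-partition argument. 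Your pivot on ``$\mathsf{HOD}$ is a weak extender model for $\delta$ supercompact'' is, by Theorem~\ref{big thm from woodin in SEM}, the same as pivoting on the $\mathsf{HOD}$ Hypothesis, which is how the paper organizes the equivalences.
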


Note that the two possible scenarios, (a) and (b), given by the $\mathsf{HOD}$ Dichotomy Theorem look indeed very different (i.e., (b) looks like a quite small subset of the logical negation of (a)). In fact, (a) says that $\mathsf{HOD}$ is close to  $\mathsf{V}$ in the way that $\mathsf{L}$ is close to $\mathsf{V}$ when $0^\sharp$ does not exist, and (b) says that $\mathsf{HOD}$ is small compared to $\mathsf{V}$ also in very much the same way that $\mathsf{L}$ is small compared to $\mathsf{V}$ when $0^\sharp$ exists.\footnote{More precisely, Jensen's Dichotomy Theorem for $\mathsf{L}$ says that exactly one of the following holds: (1) $\mathsf{L}$ is correct about singular cardinals and computes their successors correctly or (2) Every uncountable cardinal is inaccessible in $\mathsf{L}$. Theorem \ref{HOD Dichotomy} can therefore be seen as a generalization for $\mathsf{HOD}$ of Jensen's theorem for  $\mathsf{L}$.}
The $\mathsf{HOD}$ Dichotomy Theorem \ref{HOD Dichotomy} motivates the $\mathsf{HOD}$ Hypothesis: The $\mathsf{HOD}$ Hypothesis rules out possibility (b) and therefore says that only (a) can be the case and therefore $\mathsf{HOD}$ is always close to $\mathsf{V}$.
\end{enumerate}

\section{The $\mathsf{HOD}$ Hypothesis}

In this section, we give a self-contained exposition of Woodin's results about The $\mathsf{HOD}$ Hypothesis. Intuitively, the $\mathsf{HOD}$ Hypothesis just says that $\mathsf{HOD}$ is close to $\mathsf{V}$ in a certain sense.

The following Theorem \ref{key fact on forcing} is very important and we use it several times in this paper. Firstly, we list  some important facts about forcing with respect to $\mathsf{HOD}$ which are used to prove Theorem \ref{key fact on forcing}.
\begin{proposition}\label{key fact}
\begin{enumerate}[(1)]
\item (Forklore, \cite{Woodin 1})\quad If $\mathbb{P}$ is a weakly homogeneous and ordinal definable poset in $\mathsf{V}$ and $G$ is a $\mathsf{V}$-generic filter on $\mathbb{P}$, then $\mathsf{HOD}^{\mathsf{V}[G]}\subseteq \mathsf{HOD}^{\mathsf{V}}$.
\item (\cite[Lemma 4, Theorem 5]{Woodin 1})\quad  If $\kappa>\omega$ is an regular cardinal, $\mathbb{P}$ is a poset with $|\mathbb{P}|<\kappa, G$ is a $\mathbb{P}$-generic over $\mathsf{V}$, then in $\mathsf{V}[G],\mathsf{V}$ is $\Sigma_2$ definable from $\mathsf{V}\cap\mathcal{P}(\kappa)$.
  \item (Vop\v{e}nka, \cite[Theorem 6]{Woodin 1})\quad  For every ordinal $\kappa$, there exists $\mathbb{B}\in \mathsf{HOD}$ such that $\mathsf{HOD}\models \mathbb{B}$ is a a complete Boolean algebra, and for any $E\subseteq\kappa$, there exists a $\mathsf{HOD}$-generic filter $G$ on $\mathbb{B}$ such that $\mathsf{HOD}[E]\subseteq \mathsf{HOD}_{\{G\}}=\mathsf{HOD}_{\{E\}}=\mathsf{HOD}[G]$.
  \item (\cite[Theorem 15.43]{Jech})\quad  Let $G$ be generic on $\mathbb{B}$. If $M$ is a model of $\mathsf{ZFC}$ such that $\mathsf{V}\subseteq M\subseteq \mathsf{V}[G]$, then there exists a complete subalgebra $D\subseteq \mathbb{B}$ such that $M=\mathsf{V}[D\cap G]$.
\end{enumerate}
\end{proposition}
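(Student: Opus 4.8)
All four items are standard pieces of forcing technology, so my plan is to give a short argument or a precise pointer for each rather than a single unified proof; only item~(1) has content worth spelling out.

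For (1) the plan is the classical weak--homogeneity (symmetry) argument. First I would record that, since every automorphism $\pi$ of $\mathbb{P}$ fixes each check--name $\check{\alpha}$, no two conditions can force contradictory statements about checked ordinal parameters: if $p\Vdash\varphi(\check{\bar{\alpha}})$ and $q\Vdash\neg\varphi(\check{\bar{\alpha}})$, then picking $\pi$ with $\pi(p)$ compatible with $q$ gives $\pi(p)\Vdash\varphi(\check{\bar{\alpha}})$, a contradiction; hence the forcing relation restricted to such statements does not depend on the condition. Since $\mathbb{P}$ is ordinal definable, this restricted forcing relation is ordinal definable in $\mathsf{V}$. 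Consequently, if $a\subseteq\mathrm{Ord}$ is ordinal definable in $\mathsf{V}[G]$ via $a=\{\beta:\mathsf{V}[G]\models\psi(\beta,\bar{\alpha})\}$, then $a=\{\beta:\mathbb{1}\Vdash\psi(\check{\beta},\check{\bar{\alpha}})\}$, so $a\in\mathsf{V}$ and $a$ is ordinal definable in $\mathsf{V}$. Finally I would use the definable well--ordering of $\mathsf{HOD}$ to code the transitive closure of $\{x\}$, for $x\in\mathsf{HOD}^{\mathsf{V}[G]}$, by such a set of ordinals and then induct on rank to conclude $\mathsf{HOD}^{\mathsf{V}[G]}\subseteq\mathsf{HOD}^{\mathsf{V}}$.

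For (2) I would simply invoke Woodin's small--forcing ground--model definability lemma: when $|\mathbb{P}|<\kappa$ and $\kappa$ is regular, $\mathbb{P}$ has the $\delta$--cover and $\delta$--approximation properties for $\delta=|\mathbb{P}|^{+}\le\kappa$, and $\mathsf{V}$ is the unique transitive class $W\models\mathsf{ZFC}$ containing all ordinals with $W\cap\mathcal{P}(\kappa)=\mathsf{V}\cap\mathcal{P}(\kappa)$ that has those properties in $\mathsf{V}[G]$; unwinding this characterization exhibits ``$x\in\mathsf{V}$'' as a $\Sigma_{2}$ property in the parameter $\mathsf{V}\cap\mathcal{P}(\kappa)$, which is \cite[Lemma 4, Theorem 5]{Woodin 1}. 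For (3) I would build the Vop\v{e}nka poset inside $\mathsf{HOD}$: enumerate the ordinal--definable subsets of $\mathcal{P}(\kappa)$ as $\langle b_{\xi}:\xi<\lambda\rangle$ via the definable well--order of $\mathsf{HOD}$, let $\mathbb{P}_{\kappa}$ be $\lambda$ ordered by $\xi\le\eta\iff b_{\xi}\subseteq b_{\eta}$ (an ordinal--definable, hence $\mathsf{HOD}$, poset), and let $\mathbb{B}$ be its Boolean completion; for $E\subseteq\kappa$ put $G_{E}=\{\xi<\lambda:E\in b_{\xi}\}$. Vop\v{e}nka's theorem shows $G_{E}$ is $\mathsf{HOD}$--generic, and $E$ is recovered from $G_{E}$ since $\alpha\in E$ iff the index of $\{x\subseteq\kappa:\alpha\in x\}$ lies in $G_{E}$; bookkeeping these mutual definabilities together with the genericity then gives $\mathsf{HOD}[E]\subseteq\mathsf{HOD}_{\{G_{E}\}}=\mathsf{HOD}_{\{E\}}=\mathsf{HOD}[G_{E}]$, as in \cite[Theorem 6]{Woodin 1}. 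Item (4) is the intermediate--model theorem for Boolean--valued extensions, \cite[Theorem 15.43]{Jech}: taking a set of ordinals $A$ with $M=\mathsf{V}[A]$ and letting $D$ be the complete subalgebra of $\mathbb{B}$ generated by a $\mathbb{B}$--name for $A$ yields the claim.

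The only place that demands genuine care is (1): one must verify that weak homogeneity really does make the forcing relation on checked--ordinal statements condition--independent, and that the rank induction survives the passage through sets that are not themselves sets of ordinals, which is precisely where the definable well--order of $\mathsf{HOD}$ and the coding of hereditarily ordinal definable sets enter. Items (2)--(4) are quotations from the literature and contribute nothing further to the argument.
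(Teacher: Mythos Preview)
Your sketches are correct, but note that the paper does not prove this proposition at all: it is presented as a list of quoted facts with citations to \cite{Woodin 1} and \cite{Jech}, and the author proceeds immediately to Theorem~\ref{key fact on forcing}. So there is no ``paper's own proof'' to compare against; you have supplied more than the paper does.

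That said, your outlines match the standard arguments behind the cited results. For (1) the weak--homogeneity symmetry argument is exactly the folklore proof; your rank induction via coding by sets of ordinals is the usual way to pass from $\mathsf{OD}$ sets of ordinals to all of $\mathsf{HOD}$. For (2) your appeal to the $\delta$--approximation and $\delta$--cover characterization is precisely the content of \cite[Lemma~4, Theorem~5]{Woodin 1}. For (3) your Vop\v{e}nka--poset construction is the standard one, and the chain of equalities $\mathsf{HOD}[E]\subseteq\mathsf{HOD}_{\{G_E\}}=\mathsf{HOD}_{\{E\}}=\mathsf{HOD}[G_E]$ follows as you indicate. Item (4) is a straight citation. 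One minor point on (1): you do not actually need a separate rank induction once you know every $\mathsf{OD}^{\mathsf{V}[G]}$ set of ordinals lies in $\mathsf{HOD}^{\mathsf{V}}$, since $\mathsf{HOD}$ is already determined by its sets of ordinals; but what you wrote is fine.
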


\begin{theorem}\label{key fact on forcing}
(\cite[Corollary 7]{Woodin 1})\quad  Let $\mathbb{P}\in \mathsf{OD}$ be a weakly homogeneous poset. Suppose $G$ is a $\mathsf{V}$-generic filter on $\mathbb{P}$. Then $\mathsf{HOD}^{\mathsf{V}}$ is a generic extension of $\mathsf{HOD}^{\mathsf{V}[G]}$.
\end{theorem}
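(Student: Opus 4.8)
The plan is to exhibit $\mathsf{HOD}^{V}$ as an intermediate $\mathsf{ZFC}$-model between $\mathsf{HOD}^{V[G]}$ and a single set-generic extension of $\mathsf{HOD}^{V[G]}$, and then to let Proposition \ref{key fact}(3)--(4) do the rest. First, since $\mathbb{P}$ is weakly homogeneous and ordinal definable, Proposition \ref{key fact}(1) gives $\mathsf{HOD}^{V[G]}\subseteq\mathsf{HOD}^{V}$; together with the trivial inclusions $\mathsf{HOD}^{V}\subseteq V\subseteq V[G]$ this puts $\mathsf{HOD}^{V}$ between $\mathsf{HOD}^{V[G]}$ and $V[G]$, and it remains to squeeze it from above by a set.

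The heart of the argument is to produce a set of ordinals $E\in V[G]$ with $\mathsf{HOD}^{V}\subseteq\mathsf{HOD}^{V[G]}_{\{E\}}$. The obvious candidate $E=G$ (coded by ordinals) is not clearly enough, since $V$ need not be recoverable inside $V[G]$ from $G$ alone. Instead I would fix a regular cardinal $\kappa>\abs{\mathbb{P}}$ and take $E$ to be a set of ordinals coding $V\cap\mathcal{P}(\kappa)$ (any coding lying in $V$ will do), so $E\in V\subseteq V[G]$. By Proposition \ref{key fact}(2), $V$ is $\Sigma_{2}$-definable in $V[G]$ from the parameter $V\cap\mathcal{P}(\kappa)$, hence definable in $V[G]$ from $E$ and ordinal parameters. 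Therefore, if $x=\{\xi<\lambda:V\models\varphi(\xi,\beta)\}$ is any set of ordinals that is ordinal definable in $V$, then $x$ is ordinal definable in $V[G]$ from $E$, so $x\in\mathsf{HOD}^{V[G]}_{\{E\}}$. Since every element of $\mathsf{HOD}^{V}$ is coded by a set of ordinals belonging to $\mathsf{HOD}^{V}$ (using the definable well-ordering of $\mathsf{HOD}^{V}$) and $\mathsf{HOD}^{V[G]}_{\{E\}}$ is a model of $\mathsf{ZFC}$, this upgrades to the desired inclusion $\mathsf{HOD}^{V}\subseteq\mathsf{HOD}^{V[G]}_{\{E\}}$.

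To finish, work in $V[G]$ and apply Proposition \ref{key fact}(3) to $E$: there are a complete Boolean algebra $\mathbb{B}\in\mathsf{HOD}^{V[G]}$ and a $\mathsf{HOD}^{V[G]}$-generic filter $H$ on $\mathbb{B}$ with $\mathsf{HOD}^{V[G]}_{\{E\}}=\mathsf{HOD}^{V[G]}[H]$. Combining the two previous paragraphs, $\mathsf{HOD}^{V[G]}\subseteq\mathsf{HOD}^{V}\subseteq\mathsf{HOD}^{V[G]}[H]$ with $\mathsf{HOD}^{V}\models\mathsf{ZFC}$, so Proposition \ref{key fact}(4) (with ground model $\mathsf{HOD}^{V[G]}$, forcing $\mathbb{B}$, and generic $H$) produces a complete subalgebra $D\subseteq\mathbb{B}$ in $\mathsf{HOD}^{V[G]}$ with $\mathsf{HOD}^{V}=\mathsf{HOD}^{V[G]}[D\cap H]$, exhibiting $\mathsf{HOD}^{V}$ as a set-generic extension of $\mathsf{HOD}^{V[G]}$. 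I expect the main obstacle to lie in the middle step: choosing $E$ so that it captures all of $\mathsf{HOD}^{V}$ at once --- the essential point is to use the ground-model $\Sigma_{2}$-definability of Proposition \ref{key fact}(2) with a bounded parameter, rather than trying to reconstruct $V$ or $\mathsf{HOD}^{V}$ from $G$, and to note that it suffices to treat sets of ordinals once one recalls that every set is coded by one.
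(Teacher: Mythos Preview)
Your proposal is correct and follows essentially the same route as the paper's proof: both use Proposition~\ref{key fact}(2) to code $V$ in $V[G]$ by a set of ordinals $E$ encoding $\mathcal{P}(\kappa)\cap V$, conclude $\mathsf{HOD}^{V}\subseteq\mathsf{HOD}^{V[G]}_{\{E\}}$, then apply Vop\v{e}nka's theorem (Proposition~\ref{key fact}(3)) and the intermediate-model theorem (Proposition~\ref{key fact}(4)) in combination with the inclusion from Proposition~\ref{key fact}(1). The paper merely makes the coding of $E$ explicit (as a binary relation on an ordinal whose Mostowski collapse recovers $S=\mathcal{P}(\kappa)\cap V$), whereas you leave this as ``any coding lying in $V$''; otherwise the arguments are identical.
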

\begin{proof}
Let $\kappa$ be an uncountable regular cardinal such that $|\mathbb{P}|<\kappa$. By Proposition \ref{key fact}(2), $\mathsf{V}$ is definable in $\mathsf{V}[G]$ from $S$ where \ $S=\mathcal{P}(\kappa)\cap \mathsf{V}$. In $\mathsf{V}$, let $\delta=|S|$ and $E$ be a binary relation on $\delta$ such that the Mostowski collapse of $(\delta, E)$ is $(trcl(\{S\}),\in)$. Then $\mathsf{HOD}^{\mathsf{V}}\subseteq\mathsf{HOD}^{\mathsf{V}[G]}_{\{E\}}$. By Proposition \ref{key fact}(3), there is a $\mathsf{HOD}^{\mathsf{V}[G]}$-generic filter $H$ on a Vop\v{e}nka algebra
such that $\mathsf{HOD}^{\mathsf{V}[G]}_{\{E\}}=\mathsf{HOD}^{\mathsf{V}[G]}[H]$. By Proposition \ref{key fact}(1), $\mathsf{HOD}^{\mathsf{V}[G]}\subseteq \mathsf{HOD}^{\mathsf{V}}$. Since $\mathsf{HOD}^{\mathsf{V}[G]}\subseteq \mathsf{HOD}^{\mathsf{V}}\subseteq \mathsf{HOD}^{\mathsf{V}[G]}_{\{E\}}=\mathsf{HOD}^{\mathsf{V}[G]}[H]$, by Proposition \ref{key fact}(4), $\mathsf{HOD}^{\mathsf{V}}$ is a generic extension of $\mathsf{HOD}^{\mathsf{V}[G]}$.
\end{proof}

\begin{definition}\label{strongly measurable}
(Woodin, \cite[Definition 189]{Woodin 2})\quad Let $\lambda$ be an uncountable regular cardinal. Then $\lambda$ is $\omega$-strongly measurable in $\mathsf{HOD}$ iff there is $\kappa<\lambda$ such that $(2^{\kappa})^{\mathsf{HOD}}<\lambda$ and there is no partition $\langle S_{\alpha}\mid\alpha<\kappa\rangle$ of $cof(\omega)\cap\lambda$  into stationary sets such that $\langle S_{\alpha}\mid\alpha<\kappa\rangle\in \mathsf{HOD}$.
\end{definition}

\begin{definition}\label{HOD conjecuture}
(Woodin, \cite[Definition 3.42]{Woodin new})\quad The $\mathsf{HOD}$ Hypothesis denotes the following statement: there is a proper class of regular cardinals that are not $\omega$-strongly measurable in $\mathsf{HOD}$.
\end{definition}

In Woodin's recent paper \cite{Woodin new}, the $\mathsf{HOD}$ Conjecture denotes the following statement as in Definition \ref{conjecture new}. In Woodin's old paper such as \cite{Woodin 1},\cite{Woodin 2} and \cite{Woodin 3}, the $\mathsf{HOD}$ Conjecture denotes the same statement as the $\mathsf{HOD}$ Hypothesis.

\begin{definition}\label{conjecture new}
(Woodin, \cite[Definition 3.48]{Woodin new})\quad The $\mathsf{HOD}$ Conjecture denotes the following statement: the theory $\mathsf{ZFC}\, +$ ``there exists a supercompact cardinal" proves the $\mathsf{HOD}$ Hypothesis.
\end{definition}

\begin{theorem}\label{strong mc imply mc}
(Woodin, \cite[Lemma 10]{Woodin 1})\quad Suppose $\kappa$ is $\omega$-strongly measurable in $\mathsf{HOD}$. Then $\kappa$ is measurable in $\mathsf{HOD}$.
\end{theorem}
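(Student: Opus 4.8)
The plan is to construct, inside $\mathsf{HOD}$, a $\kappa$-complete nonprincipal ultrafilter on $\kappa$; since $\kappa$ is regular in $\mathsf{V}$ it remains regular (and uncountable) in $\mathsf{HOD}$, so this is enough. Fix a witness $\gamma<\kappa$ to the $\omega$-strong measurability of $\kappa$: thus $(2^{\gamma})^{\mathsf{HOD}}<\kappa$, and there is no sequence in $\mathsf{HOD}$ partitioning $S:=cof(\omega)\cap\kappa$ into $\gamma$ sets each stationary in $\mathsf{V}$. We record two facts for repeated use. First, clubs of $\kappa$ are absolute, so any $A\subseteq\kappa$ stationary in $\mathsf{V}$ is stationary as computed in $\mathsf{HOD}$; consequently the collection $\mathcal I$ of those $A\in\mathsf{HOD}$ with $A\subseteq S$ and $A$ nonstationary in $\mathsf{V}$ is, from the standpoint of $\mathsf{HOD}$, a proper, normal, $\kappa$-complete ideal (closure under subsets, under unions of fewer than $\kappa$ sets, and under diagonal unions all descend from the nonstationary ideal of $\mathsf{V}$). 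Second, since $\mathsf{HOD}$ has a definable well-ordering, any recursion whose steps are uniformly definable keeps its output in $\mathsf{HOD}$.

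The first main step is a dichotomy: \emph{either} there is a set $T\in\mathsf{HOD}$, $T\subseteq S$, stationary in $\mathsf{V}$, that cannot be split inside $\mathsf{HOD}$ into two subsets both stationary in $\mathsf{V}$ (call such a $T$ \emph{indivisible}), \emph{or} $S$ can be partitioned, inside $\mathsf{HOD}$, into $\gamma$ subsets each stationary in $\mathsf{V}$ — the latter contradicting the choice of $\gamma$. To prove this, suppose no indivisible $T$ exists, so every $\mathsf{V}$-stationary subset of $S$ lying in $\mathsf{HOD}$ splits in $\mathsf{HOD}$ into two $\mathsf{V}$-stationary pieces. Working in $\mathsf{HOD}$ and choosing splittings canonically, I would build a binary tree $\langle T_{s}\mid s\in{}^{<\gamma}2\rangle$ of $\mathsf{V}$-stationary sets: at successor stages the splitting hypothesis applies, and at a limit level $\ell<\gamma$ one retains a branch $f$ precisely when $\bigcap_{\alpha<\ell}T_{f\restriction\alpha}$ is still stationary in $\mathsf{V}$. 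The bound $(2^{\gamma})^{\mathsf{HOD}}<\kappa$ is exactly what lets this recursion of length $\gamma$ run below $\kappa$: each level has at most $2^{\gamma}<\kappa$ nodes, so $\kappa$-completeness of $\mathcal I$ is available at the limits. One then argues that some branch survives to level $\gamma$; the ``off-branch'' siblings along such a branch form a sequence in $\mathsf{HOD}$ of $\gamma$ pairwise disjoint $\mathsf{V}$-stationary subsets of $S$, and throwing the remainder of $S$ into one of them yields the forbidden partition. Hence the first alternative holds, and we obtain an indivisible $T$.

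From such a $T$ the ultrafilter is $U=\{A\subseteq\kappa\mid A\in\mathsf{HOD},\ T\setminus A\text{ is nonstationary in }\mathsf{V}\}$. Indivisibility of $T$ makes $U$ an ultrafilter on $\mathcal P(\kappa)\cap\mathsf{HOD}$; it is nonprincipal because singletons are nonstationary, and $\kappa$-complete (indeed normal) because a union of fewer than $\kappa$ nonstationary subsets of $\kappa$, and a diagonal union of such, is again nonstationary. What still has to be verified is that $U$ lies in $\mathsf{HOD}$ and is seen there as a $\kappa$-complete nonprincipal ultrafilter; this is the delicate point, since ``stationary in $\mathsf{V}$'' is not prima facie a predicate available to $\mathsf{HOD}$. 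The approach I would take is to show that, below $T$, the ideal $\mathcal I$ coincides with the restriction to $T$ of the nonstationary ideal of $\mathsf{HOD}$, so that $U\restriction T$ is literally the club ultrafilter of $\mathsf{HOD}$ on $T$ and hence definable in, and an element of, $\mathsf{HOD}$.

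I expect the two hard points to be exactly these. The first is the survival of a branch to level $\gamma$ in the splitting tree: simply peeling off one $\mathsf{V}$-stationary piece at a time does not work, because the running remainder may cease to be stationary before stage $\gamma$, so a genuinely global argument — combining normality of $\mathcal I$ with the cardinal arithmetic $(2^{\gamma})^{\mathsf{HOD}}<\kappa$ — is required. The second is the reconciliation of stationarity in $\mathsf{V}$ with definability over $\mathsf{HOD}$; this is really what $\omega$-strong measurability is about, and without settling it one cannot conclude that the ultrafilter just constructed actually belongs to $\mathsf{HOD}$.
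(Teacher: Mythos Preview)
The paper does not actually prove this theorem; it is simply quoted from \cite[Lemma~10]{Woodin 1}. Your outline follows that standard argument closely, and the overall architecture (splitting tree to produce either an atom or a forbidden partition, then reading off a normal measure from the atom) is correct. The place where your write-up goes wrong is the final step, and the error is conceptual rather than technical.

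You say that showing $U\in\mathsf{HOD}$ is ``the delicate point, since `stationary in $\mathsf V$' is not prima facie a predicate available to $\mathsf{HOD}$,'' and you propose to repair this by proving that $\mathsf V$-stationarity and $\mathsf{HOD}$-stationarity coincide below the atom $T$. But this is not needed. Membership in $\mathsf{HOD}$ does not require definability \emph{inside} $\mathsf{HOD}$; it requires hereditary ordinal definability \emph{in $\mathsf V$}. The predicate ``$X\subseteq\kappa$ is nonstationary'' is ordinal definable in $\mathsf V$, and $T\in\mathsf{HOD}\subseteq\mathsf{OD}$, so
\[
U=\{A\in\mathcal P(\kappa)\cap\mathsf{HOD}\ :\ T\setminus A\text{ is nonstationary in }\mathsf V\}
\]
is $\mathsf{OD}$; since every element of $U$ already lies in $\mathsf{HOD}$, we get $U\in\mathsf{HOD}$ immediately. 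That $\mathsf{HOD}$ then \emph{sees} $U$ as a $\kappa$-complete nonprincipal ultrafilter is just the first-order statement about $U$ and $\mathcal P(\kappa)^{\mathsf{HOD}}$ that you have already verified externally. The same remark dissolves your unease about the tree: you do not ``work in $\mathsf{HOD}$''; you work in $\mathsf V$, choosing each splitting as the $<_{\mathsf{OD}}$-least one, so the entire tree is $\mathsf{OD}$ with nodes in $\mathsf{HOD}$ and hence lies in $\mathsf{HOD}$.

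Your first flagged difficulty (branch survival) also has a clean resolution once the construction is set up this way. Build the full binary tree of height $\gamma$ without pruning: if a node is $\mathsf V$-stationary, split it into two $\mathsf V$-stationary pieces; otherwise split arbitrarily. Each $x\in S$ then determines a unique branch $f_x\in{}^{\gamma}2$, and $f_x\in\mathsf{HOD}$ since it is definable from the ordinal $x$ and the tree. Thus $\{T_f:f\in({}^{\gamma}2)^{\mathsf{HOD}}\}$ partitions $S$ into at most $(2^{\gamma})^{\mathsf{HOD}}<\kappa$ pieces, so by $\kappa$-completeness of the nonstationary ideal some $T_f$ is $\mathsf V$-stationary; every initial segment $T_{f\restriction\alpha}\supseteq T_f$ is then also $\mathsf V$-stationary, so all $\gamma$ off-branch siblings along $f$ are $\mathsf V$-stationary and pairwise disjoint, yielding the forbidden partition.
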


Note that if $V=\mathsf{HOD}$, then no cardinals can be $\omega$-strongly measurable in $\mathsf{HOD}$. So $\kappa$ is measurable in $\mathsf{HOD}$ does not imply $\kappa$ is $\omega$-strongly measurable in $\mathsf{HOD}$.

\begin{definition}
(Woodin, \cite[Definition 132]{Woodin 2})\quad Suppose $N$ is a proper class inner model of $\mathsf{V}$ and $\delta$ is a supercompact cardinal. Then $\delta$ is $N$-supercompact if for all $\lambda>\delta$, there exists an elementary embedding $j: \mathsf{V}\rightarrow M$ such that $crit(j)=\delta, j(\delta)>\lambda, M^{\mathsf{V}_{\lambda}}\subseteq M$ and $j(N\cap \mathsf{V}_{\delta})\cap \mathsf{V}_{\lambda}=N\cap \mathsf{V}_{\lambda}$.\footnote{The notion of $N$-supercompactness is a generalization of supercompactness. $\kappa$ is supercompact does not imply that $\kappa$ is $\mathsf{HOD}$-supercompact.}
\end{definition}

\begin{theorem}\label{extendible imply sc}
(Woodin, \cite[Lemma 188]{Woodin 2})\quad Suppose that $\delta$ is an extendible cardinal. Then $\delta$ is $\mathsf{HOD}$-supercompact.
\end{theorem}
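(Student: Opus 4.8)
The plan is to extract from the extendibility of $\delta$ a single elementary embedding $j\colon\mathsf{V}_\eta\to\mathsf{V}_\zeta$ with $crit(j)=\delta$ and $j(\delta)$ large that \emph{already} respects $\mathsf{HOD}$ below rank $\lambda$, and then to derive from $j$ a genuine class embedding $J\colon\mathsf{V}\to M$ inheriting this. The conceptual heart is an easy observation about $\mathsf{HOD}$. Write $\mathsf{HOD}^{\mathsf{V}_\alpha}$ for the $\mathsf{HOD}$ of the structure $(\mathsf{V}_\alpha,\in)$ (the sets in $\mathsf{V}_\alpha$ hereditarily definable over it from ordinal parameters $<\alpha$). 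Then for $\alpha\le\beta$ with $\beta$ a limit ordinal one has $\mathsf{HOD}^{\mathsf{V}_\alpha}\subseteq\mathsf{HOD}^{\mathsf{V}_\beta}$, since anything definable over $\mathsf{V}_\alpha$ from parameters $<\alpha$ is definable over $\mathsf{V}_\beta$ from $\alpha$ and those same parameters, and this passes to the hereditary closure; and $\mathsf{HOD}=\bigcup_\alpha\mathsf{HOD}^{\mathsf{V}_\alpha}$, by reflection. Consequently, for the given $\lambda>\delta$, the \emph{set} $\mathsf{HOD}\cap\mathsf{V}_\lambda$ is the (eventually constant) increasing union of the sets $\mathsf{HOD}^{\mathsf{V}_\alpha}\cap\mathsf{V}_\lambda$, so I would fix an ordinal $\alpha_0=\alpha_0(\lambda)$ past which it is constant: $\mathsf{HOD}^{\mathsf{V}_\alpha}\cap\mathsf{V}_\lambda=\mathsf{HOD}\cap\mathsf{V}_\lambda$ for all $\alpha\ge\alpha_0$. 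This is far weaker than saying that $\mathsf{V}_\alpha$ computes $\mathsf{HOD}$ correctly, and it is all I will need to know about the target of the extendibility embedding.

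Next I would fix an inaccessible $\eta>\max(\lambda,\alpha_0)$ and use extendibility to obtain an elementary $j\colon\mathsf{V}_\eta\to\mathsf{V}_\zeta$ with $crit(j)=\delta$ and $j(\delta)>\eta$; note $\zeta>j(\delta)>\lambda$. Since $\eta>\alpha_0$, $\mathsf{HOD}^{\mathsf{V}_\eta}\cap\mathsf{V}_\delta$ — which is exactly $\mathsf{V}_\eta$'s own version of $\mathsf{HOD}\cap\mathsf{V}_\delta$ — equals $\mathsf{HOD}\cap\mathsf{V}_\delta$, so by elementarity $j(\mathsf{HOD}\cap\mathsf{V}_\delta)=\mathsf{HOD}^{\mathsf{V}_\zeta}\cap\mathsf{V}_{j(\delta)}$; intersecting with $\mathsf{V}_\lambda$ and using stabilization again (valid because $\zeta>\eta>\alpha_0$) yields $j(\mathsf{HOD}\cap\mathsf{V}_\delta)\cap\mathsf{V}_\lambda=\mathsf{HOD}\cap\mathsf{V}_\lambda$. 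To pass to a class embedding I would take the normal fine $\delta$-complete ultrafilter $U$ on $\mathcal{P}_\delta(\mathsf{V}_\lambda)$ (the subsets of $\mathsf{V}_\lambda$ of size $<\delta$) derived from $j$ with seed $j[\mathsf{V}_\lambda]$ — legitimate since $\eta$ inaccessible and $>\lambda$ places all the relevant subsets in $\mathrm{dom}(j)=\mathsf{V}_\eta$ — and form $J\colon\mathsf{V}\to M=\mathrm{Ult}(\mathsf{V},U)$. The standard theory then supplies $crit(J)=\delta$, $J(\delta)>\lambda$, $M^{\mathsf{V}_\lambda}\subseteq M$, $(\mathsf{V}_{\lambda+1})^M=\mathsf{V}_{\lambda+1}$, and a factor embedding $k\colon M\to\mathsf{V}_\zeta$ with $k\circ J=j$ on $\mathsf{V}_\eta$ and $k\restriction(\mathsf{V}_{\lambda+1})^M=\mathrm{id}$. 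Finally, $Y:=J(\mathsf{HOD}\cap\mathsf{V}_\delta)\cap\mathsf{V}_\lambda$ is a subset of $\mathsf{V}_\lambda$, hence an element of $(\mathsf{V}_{\lambda+1})^M$, hence fixed by $k$; and $k(Y)=k(J(\mathsf{HOD}\cap\mathsf{V}_\delta))\cap\mathsf{V}_\lambda=j(\mathsf{HOD}\cap\mathsf{V}_\delta)\cap\mathsf{V}_\lambda=\mathsf{HOD}\cap\mathsf{V}_\lambda$. Thus $J(\mathsf{HOD}\cap\mathsf{V}_\delta)\cap\mathsf{V}_\lambda=\mathsf{HOD}\cap\mathsf{V}_\lambda$, so $J$ witnesses the $\lambda$-instance in the definition of $\mathsf{HOD}$-supercompactness. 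As $\lambda$ was arbitrary, and since $\delta$, being extendible, is supercompact, $\delta$ is $\mathsf{HOD}$-supercompact.

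I expect the only genuinely delicate point to be the observation in the first paragraph: recognizing that one does not need the extendibility target $\mathsf{V}_\zeta$ to compute $\mathsf{HOD}$ correctly, but only that $\mathsf{HOD}\cap\mathsf{V}_\lambda$ has already stabilized by the rank one reaches — which is automatic here, since $\zeta>j(\delta)>\eta>\alpha_0$. Everything else is bookkeeping: the derivation of the class ultrapower $J$ and its factor map $k$ from the set-sized $j$, and the closure property $M^{\mathsf{V}_\lambda}\subseteq M$, are routine facts about supercompactness embeddings that I would simply cite (e.g.\ from Kanamori's \emph{The Higher Infinite}). One should also note that there are inaccessibles above $\lambda$ — but that is immediate from the extendibility of $\delta$, e.g.\ by applying an extendibility embedding to $\mathsf{V}_{\lambda+1}$.
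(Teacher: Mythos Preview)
The paper does not include its own proof of this theorem; it is stated with a citation to Woodin \cite[Lemma 188]{Woodin 2} and used as a black box. So there is nothing in the paper to compare your argument against.

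That said, your proof is correct and is essentially the standard argument. The key insight you isolate --- that one does not need the target $\mathsf{V}_\zeta$ of the extendibility embedding to compute $\mathsf{HOD}$ correctly in full, only that $\mathsf{HOD}^{\mathsf{V}_\alpha}\cap\mathsf{V}_\lambda$ has stabilized to $\mathsf{HOD}\cap\mathsf{V}_\lambda$ by stage $\alpha_0$, and that $\zeta>\eta>\alpha_0$ is automatic --- is exactly the point. The monotonicity $\mathsf{HOD}^{\mathsf{V}_\alpha}\subseteq\mathsf{HOD}^{\mathsf{V}_\beta}$ for $\alpha\le\beta$ and the exhaustion $\mathsf{HOD}=\bigcup_\alpha\mathsf{HOD}^{\mathsf{V}_\alpha}$ are correct as stated, and together give the stabilization you need. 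The passage from the set-sized embedding $j$ to a class embedding $J\colon\mathsf{V}\to M$ via the derived normal fine ultrafilter on $\mathcal{P}_\delta(\mathsf{V}_\lambda)$, and the factor-map calculation transferring $j(\mathsf{HOD}\cap\mathsf{V}_\delta)\cap\mathsf{V}_\lambda=\mathsf{HOD}\cap\mathsf{V}_\lambda$ back to $J$, are routine; the only thing one might want to make explicit is that every $y\in\mathsf{V}_{\lambda+1}$ is represented in the ultrapower by a function in $\mathsf{V}_\eta$ (so that the partial factor map $k$ is defined there and fixes it), but this follows easily from $\eta$ being inaccessible above $\lambda$.
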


Note that if $\delta$ is extendible, then $\delta$ is a limit of $\mathsf{HOD}$-supercompact cardinals.

\begin{definition}
(Woodin, \cite[Definition 15]{Woodin 1})\quad Suppose $N$ is a transitive class, $\mathsf{Ord}\subseteq N$ and $N\models \mathsf{ZFC}$. $N$ is a weak extender model for $\delta$ supercompact if for every $\gamma>\delta$ there exists a normal fine $\delta$-complete measure $\mathcal{U}$ on $P_{\delta}(\gamma)$ such that $N\cap P_{\delta}(\gamma)\in \mathcal{U}$ and $\mathcal{U}\cap N\in N$.
\end{definition}

The notion of ``weak extender model for $\delta$ supercompact" is very important in the study of Inner Model Theory for one supercompact cardinal. Woodin speculates that the extension to the level of one supercompact cardinal should yield as a theorem that if $\delta$ is supercompact then there exists $N\subseteq \mathsf{HOD}$ such that $N$ is a weak extender model for $\delta$ supercompact(c.f\cite{The HOD Dichotomy}).

\begin{theorem}\label{property of weak extender model}
(Woodin, \cite[Theorem 138]{Woodin 2})\quad Suppose $N$ is a weak extender model for $\delta$ supercompact and $\gamma>\delta$ is a singular cardinal. Then $\gamma$ is singular in $N$ and $\gamma^{+}=(\gamma^{+})^{N}$.
\end{theorem}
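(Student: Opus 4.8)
The plan is to exploit the \emph{universality} of supercompactness measures. Fix a cardinal $\eta > \gamma^{+}$. By the weak extender model hypothesis there is a normal fine $\delta$-complete measure $\mathcal{U}$ on $P_{\delta}(\eta)$ with $N \cap P_{\delta}(\eta) \in \mathcal{U}$ and $\mathcal{U} \cap N \in N$. The idea is to build three inner models out of $\mathcal{U}$ — the ultrapower $M$ of $\mathsf{V}$, the ultrapower $M_{N}$ of $N$ formed internally in $N$, and a factor embedding $k \colon M_{N} \to M$ between them — and then to push the ``correctness at $\gamma$'' that $M$ enjoys all the way down into $N$.

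First I would let $j \colon \mathsf{V} \to M = \mathrm{Ult}(\mathsf{V}, \mathcal{U})$, so $crit(j) = \delta$, $j(\delta) > \eta$ and ${}^{\eta}M \subseteq M$. Since $\eta > \gamma^{+}$ and $\gamma$ is singular in $\mathsf{V}$, every cofinal sequence in $\gamma$ of length $< \gamma$ and every bijection between $\gamma$ and an ordinal in the interval $(\gamma,\gamma^{+})$ already lies in $M$; hence $\gamma$ is singular in $M$ and $(\gamma^{+})^{M} = \gamma^{+}$. Next I would form, inside $N$, the ultrapower $j_{N} \colon N \to M_{N} = \mathrm{Ult}(N, \mathcal{U} \cap N)$. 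The point of the two conditions $N \cap P_{\delta}(\eta) \in \mathcal{U}$ and $\mathcal{U} \cap N \in N$ is exactly to guarantee that $\mathcal{U} \cap N$ is, inside $N$, a normal fine $\delta$-complete measure on ($N$'s version of) $P_{\delta}(\eta)$, so that $N$ believes $M_{N}$ to be closed under $\eta$-sequences, with $crit(j_{N}) = \delta$ and $j_{N}(\delta) > \eta$. Because $N \models {}^{\eta}M_{N} \subseteq M_{N}$ and $M_{N} \subseteq N$, every witness to a cardinality or a cofinality computation at an ordinal $\le \eta$ that exists in $N$ already lies in $M_{N}$, and conversely; so $N$ and $M_{N}$ have exactly the same cardinals and cofinalities at all ordinals $\le \eta$, and it therefore suffices to prove that $\gamma$ is singular in $M_{N}$ and $(\gamma^{+})^{M_{N}} = \gamma^{+}$.

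Then I would invoke the canonical factor embedding $k \colon M_{N} \to M$ given by $k([f]_{\mathcal{U} \cap N}) = [f]_{\mathcal{U}}$, which satisfies $k \circ j_{N} = j \restriction N$. Using that $\mathcal{U} \cap N$ is fine and that $P_{\delta}(\eta)^{N}$ has full $\mathcal{U}$-measure, the seed $j_{N}''\eta = [\mathrm{id}]_{\mathcal{U}\cap N}$ is sent by $k$ to $[\mathrm{id}]_{\mathcal{U}} = j''\eta$, and since $j''\xi$ has order type $\xi$ for every $\xi \le \eta$ this forces $k$ to fix every ordinal $\le \eta$, i.e. $crit(k) > \eta > \gamma^{+}$. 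Elementarity of $k$ together with $crit(k) > \gamma^{+}$ then gives that $M_{N}$ and $M$ have the same cardinals and the same cofinalities at every ordinal $\le \gamma^{+}$. Combining the three agreements yields
\[
(\gamma^{+})^{N} = (\gamma^{+})^{M_{N}} = (\gamma^{+})^{M} = \gamma^{+},
\]
and, likewise, $\gamma$ is singular in $N$ iff it is singular in $M_{N}$ iff it is singular in $M$ — which it is. This proves the theorem.

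The step I expect to be the real obstacle is the verification in the second paragraph that $\mathcal{U} \cap N$ genuinely is a normal fine $\delta$-complete measure \emph{inside} $N$ on $P_{\delta}(\eta)^{N}$: one must check that $P_{\delta}(\eta)^{N}$ carries full $\mathcal{U}$-measure (not merely $N \cap P_{\delta}(\eta)$) and that normality and fineness survive the restriction to $N$; this is precisely where the exact form of the weak extender model hypothesis does its work, and it is also what makes the critical-point computation for $k$ go through. Everything after that — the observation that the $\eta$-closure of $M$ makes $M$ correct at $\gamma$, and the two transfers of cardinal and cofinality computations (through ${}^{\eta}M_{N} \subseteq M_{N}$ and through $k$) — is routine bookkeeping with cardinal arithmetic.
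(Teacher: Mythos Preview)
The paper does not prove this theorem; it merely cites \cite[Theorem 138]{Woodin 2} and uses the statement as a black box (for instance, in the proof of Theorem \ref{big thm from woodin in SEM}). So there is no in-paper argument to compare against.

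That said, your argument is correct and is essentially the standard proof one finds in Woodin's work and in the \emph{$\mathsf{HOD}$ Dichotomy} notes \cite{Woodin 1}: form the external ultrapower $j:\mathsf V\to M$ by $\mathcal U$, form the internal ultrapower $j_N:N\to M_N$ by $\mathcal U\cap N$, check that the factor map $k:M_N\to M$ has critical point above $\eta$, and read off the agreement on cardinals and cofinalities at $\gamma$ along the chain $N\leftrightarrow M_N\leftrightarrow M\leftrightarrow \mathsf V$.

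On the point you flag as the ``real obstacle'': there is in fact no obstacle, because $N\cap P_\delta(\eta)=P_\delta(\eta)^N$ outright. If $\sigma\in N$ with $\sigma\subseteq\eta$ and $|\sigma|<\delta$ in $\mathsf V$, then (since $\delta$ is a cardinal) $\mathrm{ot}(\sigma)<\delta$, and the unique order isomorphism $\mathrm{ot}(\sigma)\to\sigma$ is definable from $\sigma$ and hence lies in $N$, giving $|\sigma|^N<\delta$. With this equality in hand, the verification that $\mathcal U\cap N$ is normal, fine, and $\delta$-complete in $N$ is routine, and your computation that $k$ fixes every ordinal $\le\eta$ (via $k(j_N{''}\eta)=j{''}\eta$ and restriction to initial segments) goes through exactly as written.
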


\begin{theorem}\label{Magidor supercompact}
(Magidor, \cite[Theorem 22.10]{Set theory})\quad $\delta$ is supercompact if and only if for every $\kappa>\delta$, there exist $\alpha<\delta$ and an elementary embedding $j: \mathsf{V}_{\alpha}\rightarrow \mathsf{V}_{\kappa}$ with critical point $\bar{\delta}$ such that $j(\bar{\delta})=\delta$.
\end{theorem}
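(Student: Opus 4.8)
The plan is to prove both implications by reflection arguments of the standard Magidor type.

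\emph{Forward direction.} Assume $\delta$ is supercompact and fix $\kappa>\delta$. Put $\lambda=|V_{\kappa+1}|$ and use $\lambda$-supercompactness of $\delta$ to fix an elementary embedding $i\colon\mathsf{V}\to M$ with $crit(i)=\delta$, $i(\delta)>\lambda$, and ${}^{\lambda}M\subseteq M$. The closure of $M$ yields $V_{\kappa+1}\in M$ (so $(V_{\kappa})^{M}=V_{\kappa}$) and, since $|V_{\kappa}|\le\lambda$, also $i\restriction V_{\kappa}\in M$. Inside $M$ the map $i\restriction V_{\kappa}\colon V_{\kappa}\to(V_{i(\kappa)})^{M}$ is an elementary embedding with critical point $\delta$ that sends $\delta$ to $i(\delta)$; since $\kappa<\lambda<i(\delta)$, it witnesses in $M$ the statement
\[
 \Phi\bigl(i(\delta),i(\kappa)\bigr)\colon\quad \exists\,\alpha<i(\delta)\ \exists\,j\ \bigl(j\colon V_{\alpha}\to V_{i(\kappa)}\ \text{is elementary},\ crit(j)=\bar\delta\ \text{for some }\bar\delta,\ \text{and}\ j(\bar\delta)=i(\delta)\bigr).
\]
The formula $\Phi(x,y)$ is genuinely first order, since ``$j$ is elementary'' is expressible through the definable satisfaction relations of the set structures $V_{\alpha}$ and $V_{y}$. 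Applying elementarity of $i$ gives $\mathsf{V}\models\Phi(\delta,\kappa)$, which is precisely the desired conclusion for this $\kappa$; as $\kappa>\delta$ was arbitrary, the forward direction follows.

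\emph{Backward direction.} Assume the right-hand side; by the definition of supercompactness it suffices, for each ordinal $\gamma$, to produce a normal fine $\delta$-complete ultrafilter on $P_{\delta}(\gamma)$. Fix $\gamma$ and choose a limit ordinal $\kappa>\max(\delta,\gamma)$ with $V_{\kappa}\prec_{\Sigma_{n}}\mathsf{V}$ for a suitably large finite $n$ (such $\kappa$ are unbounded by the L\'evy reflection theorem). Apply the hypothesis to get $\alpha<\delta$ and an elementary embedding $j\colon V_{\alpha}\to V_{\kappa}$ with $crit(j)=\bar\delta$ and $j(\bar\delta)=\delta$; then $\alpha$ is a limit ordinal and $V_{\alpha}$ satisfies a large enough fragment of $\mathsf{ZFC}$. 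The key observation is that $j$ itself manufactures supercompactness measures at $\bar\delta$: for each $\bar\gamma<\alpha$ the set $j[\bar\gamma]:=\{j(\xi):\xi<\bar\gamma\}$ is a subset of $\kappa$ bounded by $j(\bar\gamma)<\kappa$, hence lies in $V_{\kappa}$, and the seed ultrafilter
\[
 U_{\bar\gamma}=\bigl\{X\in V_{\alpha}:X\subseteq P_{\bar\delta}(\bar\gamma)\ \text{and}\ j[\bar\gamma]\in j(X)\bigr\}
\]
is, by the standard computation, a normal fine $\bar\delta$-complete ultrafilter on $P_{\bar\delta}(\bar\gamma)$ --- and genuinely so in $\mathsf{V}$, because $\bar\gamma<\alpha$ with $\alpha$ a limit forces $V_{\alpha}$ to contain every subset of $P_{\bar\delta}(\bar\gamma)$ together with every sequence of such subsets of length below $\bar\delta$. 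Since moreover $U_{\bar\gamma}\in V_{\alpha}$, we obtain $V_{\alpha}\models$ ``$\bar\delta$ is supercompact'', and hence, by elementarity of $j$ and $j(\bar\delta)=\delta$, that $V_{\kappa}\models$ ``$\delta$ is supercompact''. As $\gamma<\kappa$ and $V_{\kappa}$ computes ``is a normal fine $\delta$-complete ultrafilter on $P_{\delta}(\gamma)$'' correctly, we read off the required ultrafilter on $P_{\delta}(\gamma)$ inside $\mathsf{V}$. Carrying this out for every $\gamma$ shows $\delta$ is supercompact.

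\emph{Main obstacle.} The real content is the absoluteness bookkeeping rather than any clever construction. In the forward direction one must verify that $M$ really contains $i\restriction V_{\kappa}$ and correctly recognizes it as an elementary embedding; in the backward direction one must verify that $V_{\alpha}$ and $V_{\kappa}$ are tall enough --- limit ordinals, closed under the relevant power sets and short sequences --- for ``being a normal fine $\delta$-complete ultrafilter'' and ``being supercompact'' to transfer correctly to and from $\mathsf{V}$. The seed computation establishing normality, fineness and $\bar\delta$-completeness of $U_{\bar\gamma}$, and the first order expressibility of $\Phi$, are routine. Two small points of care remain: the forward argument should be run for \emph{every} $\kappa>\delta$, not merely for $\kappa$ in some club; and in the backward direction it is cleanest to let $\kappa$ range over an unbounded class and extract a $\gamma$-supercompactness witness for each $\gamma$ separately, rather than trying to reflect the statement ``$\delta$ is supercompact'' directly from $V_{\kappa}$ down to $\mathsf{V}$.
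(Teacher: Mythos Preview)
The paper does not supply a proof of this theorem: it is quoted from Kanamori \cite[Theorem 22.10]{Set theory} and used as a black box. So there is no ``paper's own proof'' to compare against; your write-up stands on its own, and it is essentially correct and follows the standard argument.

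Two minor comments. In the backward direction the hypothesis $V_\kappa\prec_{\Sigma_n}\mathsf{V}$ is never used: all you need is that $\kappa$ is a limit ordinal with $\kappa>\gamma$, so that $V_\kappa$ contains $\mathcal{P}(P_\delta(\gamma))$ together with all $<\delta$-sequences of such sets, which already guarantees the absoluteness you invoke. You may simply drop the reflection clause. Also, the detour through ``$V_\alpha\models\bar\delta$ is supercompact'' can be shortened: it is enough to pick a single $\bar\gamma<\alpha$ with $j(\bar\gamma)\geq\gamma$ (such $\bar\gamma$ exists since $j$ is cofinal), observe $U_{\bar\gamma}\in V_\alpha$, reflect the single statement ``$\bar\delta$ is $\bar\gamma$-supercompact'' to $V_\kappa$, and then project. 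But your version is correct as written.
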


The following theorem is a generalization of Magidor's characterization of supercompactness and an alternative formulation of ``weak extender model for $\delta$ supercompact" in terms of suitable elementary embeddings.

\begin{theorem}\label{characteration of extender model}
(Woodin, \cite[Theorem 21]{Woodin 1})\quad Let $N$ be a proper class inner model of $\mathsf{ZFC}$. Then the following are equivalent:\footnote{Theorem \ref{characteration of extender model}  is a reformulation of \cite[Theorem 21]{Woodin 1} in terms of Magidor's characterization of supercompactness.}
\begin{enumerate}[(1)]
  \item $N$ is a weak extender model for $\delta$ supercompact.
  \item For every $\kappa>\delta$, there exist
  $\alpha<\delta$ and an elementary embedding $j: \mathsf{V}_{\alpha+1}\rightarrow \mathsf{V}_{\kappa+1}$ such that:
\begin{enumerate}[(a)]
  \item $crit(j)=\bar{\delta}$ and $j(\bar{\delta})=\delta$;
  \item $j\upharpoonright (N\cap \mathsf{V}_{\alpha})\in N$ and $j(N\cap \mathsf{V}_{\alpha})=N\cap \mathsf{V}_{\kappa}$.
\end{enumerate}
\end{enumerate}
\end{theorem}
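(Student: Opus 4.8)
The plan is to derive the equivalence $(1)\Leftrightarrow(2)$ by running the proof of Magidor's characterisation of supercompactness (Theorem~\ref{Magidor supercompact}) with the inner model $N$ carried along at every step; this is the repackaging of Woodin's \cite[Theorem 21]{Woodin 1} indicated in the footnote. I unpack (1) as follows: $N$ is a weak extender model for $\delta$ supercompact iff for every $\gamma>\delta$ there is a normal fine $\delta$-complete measure $\mathcal{U}$ on $P_{\delta}(\gamma)$ with $N\cap P_{\delta}(\gamma)\in\mathcal{U}$ and $\mathcal{U}\cap N\in N$; for such a $\mathcal{U}$ the ultrapower $j=j_{\mathcal{U}}:\mathsf{V}\to M=\mathrm{Ult}(\mathsf{V},\mathcal{U})$ has $crit(j)=\delta$, $j(\delta)>\gamma$ and ${}^{\gamma}M\subseteq M$; moreover $N\cap P_{\delta}(\gamma)\in\mathcal{U}$ is equivalent to $j[\gamma]\in j(N)$, and $\mathcal{U}\cap N\in N$ allows $N$ to form $j^{N}_{\mathcal{U}\cap N}:N\to\mathrm{Ult}(N,\mathcal{U}\cap N)$, together with the factor embedding $k:\mathrm{Ult}(N,\mathcal{U}\cap N)\to j(N)$ satisfying $k\circ j^{N}_{\mathcal{U}\cap N}=j\upharpoonright N$.

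For $(1)\Rightarrow(2)$, fix $\kappa>\delta$ and choose a strong limit cardinal $\gamma>|\mathsf{V}_{\kappa+1}|$; let $\mathcal{U}$ and $j:\mathsf{V}\to M$ be as above for this $\gamma$. Since $\gamma$ is a strong limit and ${}^{\gamma}M\subseteq M$, we have $\mathsf{V}_{\kappa+1}\subseteq M$ and $\mathsf{V}_{\kappa+1}^{M}=\mathsf{V}_{\kappa+1}$, and the restriction $e:=j\upharpoonright\mathsf{V}_{\kappa+1}$ --- a sequence of length $<\gamma$ of elements of $M$ --- lies in $M$, is elementary from $\mathsf{V}_{\kappa+1}$ into $\mathsf{V}_{j(\kappa)+1}^{M}=j(\mathsf{V}_{\kappa+1})$, and satisfies $crit(e)=\delta$ and $e(\delta)=j(\delta)$. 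The technical core is to establish, inside $M$, the two coherence facts $j(N)\cap\mathsf{V}_{\kappa}=N\cap\mathsf{V}_{\kappa}$ and $j\upharpoonright(N\cap\mathsf{V}_{\kappa})\in j(N)$; this uses $j[\gamma]\in j(N)$ (whence $j\upharpoonright\gamma\in j(N)$) together with a delicate analysis of the factor map $k$, and it is here that the full strength of the weak extender model hypothesis on $N$ is spent. Granting this, the pair $\alpha:=\kappa$, $e$ witnesses \emph{in $M$} the statement ``there are $\alpha<j(\delta)$ and an elementary $\hat e:\mathsf{V}_{\alpha+1}\to\mathsf{V}_{j(\kappa)+1}$ with $crit(\hat e)=\bar\delta$, $\hat e(\bar\delta)=j(\delta)$, $\hat e\upharpoonright(j(N)\cap\mathsf{V}_{\alpha})\in j(N)$ and $\hat e(j(N)\cap\mathsf{V}_{\alpha})=j(N)\cap\mathsf{V}_{j(\kappa)}$''. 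Pulling this statement back along the elementary embedding $j$ yields precisely clause (2) for $\kappa$ (with $\alpha<\delta$, since ``$\alpha<j(\delta)$'' pulls back to ``$\alpha<\delta$'').

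For $(2)\Rightarrow(1)$, note first that, ignoring clause (b), (2) is exactly the right-hand side of Theorem~\ref{Magidor supercompact}, so $\delta$ is supercompact. To see $N$ is a weak extender model for it, fix $\gamma>\delta$ and apply (2) with a large $\kappa>\gamma$ chosen so that Magidor's construction goes through (one wants $\kappa$ of small enough cofinality that the ordinals in the range of $e$ are cofinal in $\kappa$, so that $\gamma\le e(\bar\gamma)$ for some $\bar\gamma<\alpha$). From the resulting $e:\mathsf{V}_{\alpha+1}\to\mathsf{V}_{\kappa+1}$ --- with $crit(e)=\bar\delta$, $e(\bar\delta)=\delta$, $e\upharpoonright(N\cap\mathsf{V}_{\alpha})\in N$ and $e(N\cap\mathsf{V}_{\alpha})=N\cap\mathsf{V}_{\kappa}$ --- one builds, exactly as in the converse half of the proof of Theorem~\ref{Magidor supercompact}, a normal fine $\delta$-complete measure $\mathcal{U}$ on $P_{\delta}(\gamma)$ (using the seed naturally attached to $e$ and projecting down from $P_{\delta}(e(\bar\gamma))$ if necessary). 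Then $e(N\cap\mathsf{V}_{\alpha})=N\cap\mathsf{V}_{\kappa}$ delivers $N\cap P_{\delta}(\gamma)\in\mathcal{U}$, and $e\upharpoonright(N\cap\mathsf{V}_{\alpha})\in N$ lets $N$ compute $\mathcal{U}\cap N$, so $\mathcal{U}\cap N\in N$; hence $N$ is a weak extender model for $\delta$ supercompact.

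The main obstacle is the pair of coherence facts used in the second paragraph (and their mirror images in the third): that the supercompactness embedding $j$ arising from an $N$-capturing measure on $P_{\delta}(\gamma)$ restricts on $\mathsf{V}_{\kappa+1}$ to an embedding lying in $M$ with $j(N)\cap\mathsf{V}_{\kappa}=N\cap\mathsf{V}_{\kappa}$ and $j\upharpoonright(N\cap\mathsf{V}_{\kappa})\in j(N)$. Proving these requires the careful analysis of the seed $j[\gamma]$ and of the factor embedding $\mathrm{Ult}(N,\mathcal{U}\cap N)\to j(N)$ that forms the technical heart of Woodin's theory of weak extender models; everything else --- choosing $\gamma$, restricting $j$, reflecting with Magidor's statement, and reconstructing the measure from $e$ --- is the proof of Theorem~\ref{Magidor supercompact} with $N$ merely tracked alongside, and involves no new ideas.
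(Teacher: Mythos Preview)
The paper does not contain a proof of this theorem. It is stated with attribution to Woodin (\cite[Theorem~21]{Woodin 1}) and immediately used as a black box to derive Theorem~\ref{general universality theorem} and later results; no argument is given in the paper itself. So there is nothing in the paper to compare your attempt against.

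As to the attempt on its own: your outline has the right architecture---it is indeed Magidor's proof run with $N$ carried along, and you correctly isolate the two coherence facts $j(N)\cap \mathsf{V}_{\kappa}=N\cap \mathsf{V}_{\kappa}$ and $j\upharpoonright(N\cap \mathsf{V}_{\kappa})\in j(N)$ as the crux of $(1)\Rightarrow(2)$. But you do not prove them; you write ``Granting this'' and defer to ``the technical heart of Woodin's theory of weak extender models''. That is an honest acknowledgement, but it means what you have is a plan, not a proof. The missing piece is not deep once one sees it: from $N\cap P_{\delta}(\gamma)\in\mathcal{U}$ one gets $j[\gamma]\in j(N)$, and since every element of $\mathrm{Ult}(N,\mathcal{U}\cap N)$ is of the form $j^{N}(f)(j[\gamma])$ for $f\in N$, the factor map $k$ is onto the Skolem hull of $j[N]\cup\{j[\gamma]\}$ in $j(N)$; a short computation then shows $crit(k)>\gamma$, whence $j(N)$ and $N$ agree on $\mathsf{V}_{\kappa}$ and $j\upharpoonright(N\cap \mathsf{V}_{\kappa})$ is computable in $j(N)$ from $j[\gamma]$. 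Your converse direction is essentially correct as sketched.
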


As a corollary of Theorem \ref{characteration of extender model}, we have the following universality theorem for weak extender model for $\delta$ supercompact.

\begin{theorem}\label{general universality theorem}
(Woodin, \cite[Theorem 144]{Woodin 2})\quad Suppose $N$ is a weak extender model for $\delta$ supercompact and $\gamma>\delta$ is a cardinal of $N$. If $j:(H_{\gamma^{+}})^{N}\rightarrow (H_{j(\gamma)^{+}})^{N}$ is an elementary embedding with $crit(j)\geq\delta$. Then $j \in N$.
\end{theorem}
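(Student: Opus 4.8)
The plan is to obtain this universality statement from the embedding characterization of weak extender models, Theorem~\ref{characteration of extender model}, by reflecting a putative counterexample below $\delta$ and transporting it back up inside $N$. First some preliminaries: in $(H_{\gamma^{+}})^{N}$ the cardinal $\gamma$ is the largest cardinal, so by elementarity $j(\gamma)$ is a cardinal of $N$; write $\eta = j(\gamma)$, so $\delta < \gamma \leq \eta$ and $j\colon (H_{\gamma^{+}})^{N}\to (H_{\eta^{+}})^{N}$. If $crit(j) > \gamma$ then $j$ fixes every element of $(H_{\gamma^{+}})^{N}$ and $j = \mathrm{id}\in N$, so assume $\delta \leq crit(j)\leq \gamma$. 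As $j$ is a subset of $(H_{\gamma^{+}})^{N}\times (H_{\eta^{+}})^{N}$, it lies in $\mathsf{V}_{\theta}$ for every $\theta$ above $\max\bigl(|(H_{\gamma^{+}})^{\mathsf{V}}|,\,|(H_{\eta^{+}})^{\mathsf{V}}|\bigr)$; fix such a $\theta$ which is moreover a $\beth$-fixed point $\kappa > \eta$ with $\mathsf{V}_{\kappa}\prec_{\Sigma_{2}} \mathsf{V}$, so that $\mathsf{V}_{\kappa} = H_{\kappa}$ computes $(H_{\gamma^{+}})^{N}$, $(H_{\eta^{+}})^{N}$ and their $N$-successors correctly and has $j$ as an element. (When $\gamma$ or $\eta$ is singular, Theorem~\ref{property of weak extender model} gives $(\gamma^{+})^{N} = \gamma^{+}$, resp.\ $(\eta^{+})^{N}=\eta^{+}$, keeping these computations compatible.)

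Now apply Theorem~\ref{characteration of extender model}, direction $(1)\Rightarrow (2)$, with this $\kappa$: there are $\alpha < \delta$ and an elementary embedding $\pi\colon \mathsf{V}_{\alpha+1}\to \mathsf{V}_{\kappa+1}$ with $crit(\pi) = \bar{\delta}$, $\pi(\bar{\delta}) = \delta$, $\sigma := \pi\upharpoonright (N\cap \mathsf{V}_{\alpha})\in N$, and $\pi(N\cap \mathsf{V}_{\alpha}) = N\cap \mathsf{V}_{\kappa}$. Since $N\cap \mathsf{V}_{\alpha}\in \mathsf{V}_{\alpha+1}$ and $\pi$ is fully elementary, $\sigma$ is a fully elementary embedding of $(N\cap \mathsf{V}_{\alpha};\in)$ into $(N\cap \mathsf{V}_{\kappa};\in)$ with $crit(\sigma) = \bar{\delta}$ and $\sigma(\bar{\delta}) = \delta$; as $\kappa$ ranges over arbitrarily large values this already shows $\delta$ is supercompact in $N$ (Magidor's characterization, Theorem~\ref{Magidor supercompact}, inside $N$). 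Suppose toward a contradiction that $j\notin N$; writing $\bar{N} := N\cap \mathsf{V}_{\kappa} = \pi(N\cap \mathsf{V}_{\alpha})$ and noting $j\in \mathsf{V}_{\kappa}$, hence $j\notin \bar{N}$, the structure $\mathsf{V}_{\kappa+1}$ satisfies, with the parameters $\bar{N}$ and $\delta$ only, the sentence ``there are a cardinal $\gamma^{\ast} > \delta$ of $\bar{N}$ and an elementary embedding $e\colon (H_{(\gamma^{\ast})^{+}})^{\bar{N}}\to (H_{(e(\gamma^{\ast}))^{+}})^{\bar{N}}$ with $crit(e)\geq \delta$ and $e\notin \bar{N}$''. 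Both parameters lie in $\mathrm{ran}(\pi)$, so $\pi^{-1}$ pulls this sentence back to $\mathsf{V}_{\alpha+1}$: there are a cardinal $\bar{\gamma} < \alpha$ of $N\cap \mathsf{V}_{\alpha}$ with $\bar{\gamma} > \bar{\delta}$ and an elementary embedding $\bar{e}\colon (H_{\bar{\gamma}^{+}})^{N}\to (H_{\rho^{+}})^{N}$, where $\rho = \bar{e}(\bar{\gamma}) < \alpha$, with $crit(\bar{e})\geq \bar{\delta}$ and $\bar{e}\notin N$.

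The contradiction should come from the coherence of $\pi$ with $N$. Since $\pi$ is elementary and $\bar{e}$, together with all its values, lies in $N\cap \mathsf{V}_{\alpha}$, one has $\pi(\bar{e})\circ \sigma = \sigma\circ \bar{e}$ on $(H_{\bar{\gamma}^{+}})^{N}$, hence $\bar{e} = \sigma^{-1}\circ \bigl(\pi(\bar{e})\upharpoonright \sigma[(H_{\bar{\gamma}^{+}})^{N}]\bigr)\circ \sigma$; since $\sigma$ and $\sigma^{-1}\upharpoonright \mathrm{ran}(\sigma)$ lie in $N$, it is enough to place the small partial map $\pi(\bar{e})\upharpoonright \sigma[(H_{\bar{\gamma}^{+}})^{N}]$ in $N$, as this would force $\bar{e}\in N$, contradicting $\bar{e}\notin N$ and hence showing $j\in N$. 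This is where the remaining content of Theorem~\ref{characteration of extender model} must be used --- principally the clause $\pi(N\cap \mathsf{V}_{\alpha}) = N\cap \mathsf{V}_{\kappa}$, that $\pi$ carries a rank-initial segment of $N$ \emph{onto} $N\cap \mathsf{V}_{\kappa}$, together with the agreement of $\pi$ with $N$ above $\delta$ --- to recognize $\pi(\bar{e})$, and with it $\bar{e}$, as reconstructible inside $N$. I expect this last step to be the main obstacle. The delicate points are: to show that the reflected embedding $\pi(\bar{e})$ is pinned down by data available in $N$ --- equivalently, to control the factor map between an embedding and the $\sigma$-conjugate of its $\pi$-image --- rather than being merely one of possibly many embeddings at its level not in $N$; and to verify that the relevant restrictions of $\pi$ and of $\pi(\bar{e})$ are genuine \emph{members} of $N$ rather than only subsets of it. This seems to require exploiting the coherence of $\pi$ with $N$ both below and above $\delta$ with care, treating $\pi$ as more than a black-box elementary embedding.
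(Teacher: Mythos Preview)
The paper does not give its own proof of this theorem; it is stated with attribution to \cite{Woodin 2} and the remark that it follows as a corollary of Theorem~\ref{characteration of extender model}. So there is nothing detailed here to compare against beyond the indication that the characterization is the right tool.

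Your reflection-and-reconstruction strategy has a genuine gap, and it is exactly the one you flag. The commutation $\pi(\bar e)(\sigma(x)) = \sigma(\bar e(x))$ shows that $\bar e$ and $\pi(\bar e)\upharpoonright \sigma[(H_{\bar\gamma^+})^N]$ are interdefinable over $\sigma$; and since $\pi(N\cap \mathsf{V}_\alpha) = N\cap \mathsf{V}_\kappa$ as sets, elementarity of $\pi$ gives $\pi(\bar e)\in N \Longleftrightarrow \bar e\in N$ outright. So the reconstruction is circular: you have used only $\sigma\in N$, and that alone cannot decide membership of $\bar e$ in $N$. The structural reason the contradiction cannot close is that reflecting below $\delta$ discards the hypothesis you need: the reflected embedding $\bar e$ has $crit(\bar e)\geq \bar\delta$, but $N$ is not assumed to be a weak extender model for $\bar\delta$ supercompact, so there is no residual leverage at that level to exclude $\bar e\notin N$.

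Woodin's argument is direct rather than by contradiction. One works with the normal fine $\delta$-complete measure $U$ on $P_\delta(\lambda)$ (for $\lambda$ sufficiently large) satisfying $N\cap P_\delta(\lambda)\in U$ and $U\cap N\in N$, and uses the two clauses in tandem: the first yields $i_U[\lambda]\in i_U(N)$, and the second lets $N$ compute the ultrapower map on its own sets of size at most $\lambda$. From this one shows that $j$ (equivalently its derived extender) lies in $N$. In the embedding formulation of Theorem~\ref{characteration of extender model} this amounts to exploiting that $\pi$ arises from such a measure, not merely that its restriction $\sigma$ to $N\cap \mathsf{V}_\alpha$ lies in $N$---which is precisely the extra content your approach leaves on the table.
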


\begin{theorem}\label{big thm from woodin first}
(Woodin, \cite[Theorem 193]{Woodin 2})\quad Suppose the $\mathsf{HOD}$ Hypothesis holds and $\delta$ is $\mathsf{HOD}$-supercompact. Then $\mathsf{HOD}$ is a weak extender model for $\delta$ supercompact.\footnote{Theorem \ref{big thm from woodin first} is a reformulation of \cite[Theorem 193]{Woodin 2}.}
\end{theorem}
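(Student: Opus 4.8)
The plan is to verify clause (2) of Theorem~\ref{characteration of extender model} for the inner model $\mathsf{HOD}$ (one could equally well verify the measure-theoretic definition of a weak extender model directly), by a reflection argument off the $\mathsf{HOD}$-supercompactness of $\delta$. So fix $\kappa>\delta$; the goal is to produce $\alpha<\delta$ and an elementary embedding $j:\mathsf{V}_{\alpha+1}\rightarrow\mathsf{V}_{\kappa+1}$ with $crit(j)=\bar\delta$, $j(\bar\delta)=\delta$, $j\upharpoonright(\mathsf{HOD}\cap\mathsf{V}_{\alpha})\in\mathsf{HOD}$ and $j(\mathsf{HOD}\cap\mathsf{V}_{\alpha})=\mathsf{HOD}\cap\mathsf{V}_{\kappa}$.

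First I would choose a sufficiently large cardinal $\lambda>\kappa$ and, using that $\delta$ is $\mathsf{HOD}$-supercompact, fix $i:\mathsf{V}\rightarrow M$ with $crit(i)=\delta$, $i(\delta)>\lambda$, $M^{\mathsf{V}_{\lambda}}\subseteq M$ and $i(\mathsf{HOD}\cap\mathsf{V}_{\delta})\cap\mathsf{V}_{\lambda}=\mathsf{HOD}\cap\mathsf{V}_{\lambda}$. From $M^{\mathsf{V}_{\lambda}}\subseteq M$ one gets $\mathsf{V}_{\lambda}\subseteq M$, and elementarity applied to the parameter-free definition of $\mathsf{HOD}$ gives $i(\mathsf{HOD}\cap\mathsf{V}_{\delta})=\mathsf{HOD}^{M}\cap\mathsf{V}_{i(\delta)}$; since $\lambda<i(\delta)$ the coherence clause becomes
\[
\mathsf{HOD}^{M}\cap\mathsf{V}_{\lambda}=\mathsf{HOD}\cap\mathsf{V}_{\lambda},
\]
so $\mathsf{HOD}^{M}\cap\mathsf{V}_{\mu}=\mathsf{HOD}\cap\mathsf{V}_{\mu}$ for all $\mu\le\lambda$. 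Now set $e:=i\upharpoonright\mathsf{V}_{\kappa+1}$. Since $\mathsf{V}_{\kappa+1}\in\mathsf{V}_{\lambda}$ we have $e\in M$; inside $M$, $e$ is an elementary embedding $\mathsf{V}_{\kappa+1}\rightarrow\mathsf{V}_{i(\kappa)+1}^{M}$ with $crit(e)=\delta$ and $e(\delta)=i(\delta)$, and (using $\mathsf{HOD}^{M}\cap\mathsf{V}_{\kappa}=\mathsf{HOD}\cap\mathsf{V}_{\kappa}$ together with elementarity of $i$) $e(\mathsf{HOD}^{M}\cap\mathsf{V}_{\kappa})=i(\mathsf{HOD}\cap\mathsf{V}_{\kappa})=\mathsf{HOD}^{M}\cap\mathsf{V}_{i(\kappa)}^{M}$. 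Consequently, \emph{provided} $e\upharpoonright(\mathsf{HOD}^{M}\cap\mathsf{V}_{\kappa})\in\mathsf{HOD}^{M}$, the model $M$ satisfies ``there are $\alpha<i(\delta)$ (namely $\alpha=\kappa$, with witness $j=e$) and an elementary $j:\mathsf{V}_{\alpha+1}\rightarrow\mathsf{V}_{i(\kappa)+1}$ with $crit(j)=\bar\delta$, $j(\bar\delta)=i(\delta)$, $j\upharpoonright(\mathsf{HOD}\cap\mathsf{V}_{\alpha})\in\mathsf{HOD}$ and $j(\mathsf{HOD}\cap\mathsf{V}_{\alpha})=\mathsf{HOD}\cap\mathsf{V}_{i(\kappa)}$''. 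The only parameters in this assertion are $i(\delta)$ and $i(\kappa)$, so pulling it back through the elementary embedding $i$ yields exactly the required $\alpha<\delta$ and $j$ in $\mathsf{V}$, and then Theorem~\ref{characteration of extender model} gives that $\mathsf{HOD}$ is a weak extender model for $\delta$ supercompact.

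Thus the entire argument reduces to the single point
\[
e\upharpoonright(\mathsf{HOD}^{M}\cap\mathsf{V}_{\kappa})=i\upharpoonright(\mathsf{HOD}\cap\mathsf{V}_{\kappa})\in\mathsf{HOD}^{M},
\]
and I expect this to be the genuine obstacle — observe that nothing so far has used the $\mathsf{HOD}$ Hypothesis, so it must enter precisely here. Viewed inside $M$, the map $i\upharpoonright(\mathsf{HOD}\cap\mathsf{V}_{\kappa})$ is an elementary embedding of $(\mathsf{V}_{\kappa})^{\mathsf{HOD}^{M}}$ into $(\mathsf{V}_{i(\kappa)})^{\mathsf{HOD}^{M}}$ with critical point $\delta$, and what must be shown is that $\mathsf{HOD}^{M}$ itself captures this embedding — a universality-type conclusion that cannot be extracted from Theorem~\ref{general universality theorem} or Theorem~\ref{characteration of extender model} without circularity. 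Since $i$ is elementary, $M$ models ``the $\mathsf{HOD}$ Hypothesis'' and ``$i(\delta)$ is $\mathsf{HOD}$-supercompact'', and $\mathsf{HOD}^{M}\cap\mathsf{V}_{\mu}=\mathsf{HOD}\cap\mathsf{V}_{\mu}$ for all $\mu\le\lambda$.

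The plan for this last step is to argue inside $M$ by contradiction, using the analysis of stationary partitions that underlies $\omega$-strong measurability (Definition~\ref{strongly measurable}, Theorem~\ref{strong mc imply mc}) together with a Vop\v{e}nka-algebra coding of $i\upharpoonright(\mathsf{HOD}\cap\mathsf{V}_{\kappa})$ as a set of ordinals: if this map were not in $\mathsf{HOD}^{M}$, one should be able to extract, on cofinally many regular cardinals $\mu$, the nonexistence of an $\mathsf{HOD}^{M}$-partition of $cof(\omega)\cap\mu$ into the requisite number of stationary pieces, so that a proper class of regular cardinals of $M$ is $\omega$-strongly measurable in $\mathsf{HOD}^{M}$, contradicting $M\models$``the $\mathsf{HOD}$ Hypothesis'' and hence, by elementarity of $i$, the $\mathsf{HOD}$ Hypothesis in $\mathsf{V}$. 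Making this precise — locating the partitions and the cardinals at which they must fail — is the technical core of the proof; everything else is the routine reflection bookkeeping described above.
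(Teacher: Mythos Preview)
Your reduction is correct: the reflection framework in your first three paragraphs matches the approach the paper attributes to Woodin (compare Definition~\ref{sup HOD cardinal}, Theorem~\ref{definition equivalence}, and the deployment of this framework in the proof of Theorem~\ref{main result of the paper}), and you have isolated the right obstacle --- getting $i\upharpoonright(\mathsf{HOD}\cap\mathsf{V}_{\kappa})\in\mathsf{HOD}^{M}$, with the $\mathsf{HOD}$ Hypothesis entering exactly here.

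But your plan for this step is not how the argument works. It is direct, not by contradiction, and Vop\v{e}nka algebras play no role. The missing idea is what the paper records as Lemma~\ref{isolated lemma}: if $\langle S_\alpha:\alpha<\gamma\rangle\in\mathsf{HOD}$ is a partition of $cof(\omega)\cap\theta$ into stationary sets for a regular $\theta$, and $j$ is an elementary embedding with $j(\bar\gamma,\bar\theta)=(\gamma,\theta)$ and $j(\langle\bar S_\alpha\rangle)=\langle S_\alpha\rangle$, then setting $\eta_0=\sup j[\bar\theta]$ one has
\[
\{\beta<\gamma:S_\beta\cap\eta_0\text{ is stationary in }\eta_0\}\ =\ j[\bar\gamma].
\]
The left side is computed from the partition (which is in $\mathsf{HOD}$) and the single ordinal $\eta_0$, so the range $j[\bar\gamma]$ --- and hence $j\upharpoonright\bar\gamma$ --- lies in $\mathsf{HOD}$. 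Now the $\mathsf{HOD}$ Hypothesis is used \emph{constructively}: one chooses $\lambda$ to dominate a regular $\theta$ that is not $\omega$-strongly measurable in $\mathsf{HOD}$ with $\theta>|\mathsf{V}_{\kappa+\omega}|$, obtains such a partition with $\gamma=|\mathsf{V}_{\kappa+\omega}|$ in $\mathsf{HOD}$, arranges that it lies in the range of the embedding, and applies the lemma. From $j\upharpoonright|\mathsf{V}_{\bar\kappa+\omega}|\in\mathsf{HOD}$ together with a $\mathsf{HOD}$-surjection of an ordinal onto $\mathsf{HOD}\cap\mathsf{V}_{\bar\kappa}$ one then recovers $j\upharpoonright(\mathsf{HOD}\cap\mathsf{V}_{\bar\kappa})\in\mathsf{HOD}$; the proof of Theorem~\ref{main result of the paper} carries out exactly this computation in a closely related setting. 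Your proposed contrapositive --- ``if the map failed to be in $\mathsf{HOD}$ then partitions would fail to exist on a proper class'' --- has no mechanism behind it; the implication only runs in the forward direction, from the existence of one good partition to the computability of the embedding in $\mathsf{HOD}$.
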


From \cite[Lemma 136]{Woodin 2}, if $N$ is a weak extender model for $\delta$ supercompact, then $\delta$ is $N$-supercompact. So if the $\mathsf{HOD}$ Hypothesis holds, then ``$\mathsf{HOD}$ is a weak extender model for $\delta$ supercompact" is equivalent to $\delta$ is $\mathsf{HOD}$-supercompact.

From Theorem \ref{big thm from woodin first}, Theorem \ref{characteration of extender model} and Theorem \ref{Magidor supercompact}, if the $\mathsf{HOD}$ Hypothesis holds and $\kappa$ is extendible (or $\kappa$ is $\mathsf{HOD}$-supercompact) in $V$, then $\kappa$ is supercompact in $\mathsf{HOD}$.

The following remarkable Universality Theorem follows from Theorem \ref{big thm from woodin first} and Theorem \ref{general universality theorem}.

\begin{theorem}\label{big thm from woodin second}
(Woodin, Global Universality Theorem, (\cite[Theorem 201]{Woodin 2})) \quad Suppose the $\mathsf{HOD}$ Hypothesis holds and $\delta$ is $\mathsf{HOD}$-supercompact. If $j: \mathsf{HOD}\cap \mathsf{V}_{\gamma+1}\rightarrow M\subseteq \mathsf{HOD}\cap \mathsf{V}_{j(\gamma)+1}$ is an elementary embedding with $crit(j)\geq\delta$. Then $j \in \mathsf{HOD}$.
\end{theorem}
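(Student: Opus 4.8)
The plan is to obtain this as a direct consequence of Theorem~\ref{big thm from woodin first} and Theorem~\ref{general universality theorem}, taking $N:=\mathsf{HOD}$. First, since the $\mathsf{HOD}$ Hypothesis holds and $\delta$ is $\mathsf{HOD}$-supercompact, Theorem~\ref{big thm from woodin first} tells us that $\mathsf{HOD}$ is a weak extender model for $\delta$ supercompact, so $\mathsf{HOD}$ satisfies the standing hypothesis of Theorem~\ref{general universality theorem}. Since $\mathsf{HOD}$ is an inner model, $\mathsf{HOD}\cap\mathsf{V}_{\alpha}=(\mathsf{V}_{\alpha})^{\mathsf{HOD}}$ for every $\alpha$, so $j$ is an elementary embedding between initial segments of $\mathsf{HOD}$ computed inside $\mathsf{HOD}$. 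We may assume $j$ is non-trivial and that $crit(j)\le\gamma$: if $crit(j)>\gamma$ then $j$ fixes every ordinal $\le\gamma$ and hence, by induction on rank, every element of $\mathsf{HOD}\cap\mathsf{V}_{\gamma+1}$, so $j$ is the identity and belongs to $\mathsf{HOD}$ trivially. In particular $\gamma\ge crit(j)\ge\delta$.

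It remains to pass between the $\mathsf{V}_{\gamma+1}$-formulation above and the $H_{\gamma^{+}}$-formulation of Theorem~\ref{general universality theorem}. I would treat first the principal case, in which $\gamma$ is a strong limit cardinal of $\mathsf{HOD}$, so that $(\mathsf{V}_{\gamma})^{\mathsf{HOD}}=(H_{\gamma})^{\mathsf{HOD}}$. In that case the structures $((\mathsf{V}_{\gamma+1})^{\mathsf{HOD}},\in)$ and $((H_{\gamma^{+}})^{\mathsf{HOD}},\in)$ are mutually interpretable inside $\mathsf{HOD}$: every $X\in(H_{\gamma^{+}})^{\mathsf{HOD}}$ is the transitive collapse of some well-founded extensional relation on a subset of $\gamma$, and such a relation has rank at most $\gamma$, hence lies in $(\mathsf{V}_{\gamma+1})^{\mathsf{HOD}}$ (this is where one uses that $\gamma$ is a strong limit cardinal); conversely $(\mathsf{V}_{\gamma+1})^{\mathsf{HOD}}$ is a definable subclass of $(H_{\gamma^{+}})^{\mathsf{HOD}}$, since $\gamma$ is the largest cardinal of the latter. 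Transporting $j$ along this coding produces an elementary embedding $\hat{j}\colon (H_{\gamma^{+}})^{\mathsf{HOD}}\to (H_{j(\gamma)^{+}})^{\mathsf{HOD}}$ with $crit(\hat{j})=crit(j)\ge\delta$ and $\hat{j}\upharpoonright(\mathsf{V}_{\gamma+1})^{\mathsf{HOD}}=j$. By Theorem~\ref{general universality theorem}, applied with $N=\mathsf{HOD}$, we get $\hat{j}\in\mathsf{HOD}$, whence $j=\hat{j}\upharpoonright(\mathsf{V}_{\gamma+1})^{\mathsf{HOD}}\in\mathsf{HOD}$, as both $\hat{j}$ and $(\mathsf{V}_{\gamma+1})^{\mathsf{HOD}}$ lie in $\mathsf{HOD}$.

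The hard part of this plan is exactly the translation just sketched. Even when $\gamma$ is a strong limit cardinal of $\mathsf{HOD}$ one must verify that $\hat{j}$ is well-defined (independent of the choice of codes), elementary, has critical point $\ge\delta$, and has codomain contained in $(H_{j(\gamma)^{+}})^{\mathsf{HOD}}$ --- in particular one must accommodate the fact that the given $M$ need only be a subset of, not equal to, $\mathsf{HOD}\cap\mathsf{V}_{j(\gamma)+1}$; these points are routine but need care. More seriously, when $\gamma$ is \emph{not} a strong limit cardinal of $\mathsf{HOD}$ the above coding breaks down, and one cannot simply quote Theorem~\ref{general universality theorem}; instead one reruns its proof with $\mathsf{V}_{\gamma+1}$ in place of $H_{\gamma^{+}}$. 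That argument proceeds through the Magidor-type characterization of weak extender models in Theorem~\ref{characteration of extender model}: using the weak-extender-model structure of $\mathsf{HOD}$ one obtains, for a sufficiently large $\kappa$, an ``$\mathsf{HOD}$-good'' supercompactness embedding $k\colon\mathsf{V}_{\alpha+1}\to\mathsf{V}_{\kappa+1}$ with $\alpha<\delta$, $crit(k)=\bar\delta$, $k(\bar\delta)=\delta$ and $k\upharpoonright(\mathsf{HOD}\cap\mathsf{V}_{\alpha})\in\mathsf{HOD}$, and then exploits this, together with $crit(j)\ge\delta$ and the elementarity of $j$, to reflect $j$ and conclude that $j\in\mathsf{HOD}$.
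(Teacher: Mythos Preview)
Your approach is exactly the paper's: the paper states only that the result ``follows from Theorem~\ref{big thm from woodin first} and Theorem~\ref{general universality theorem}'' and gives no further argument. Your proposal is therefore correct and aligned with the paper; in fact you go further than the paper does, by explicitly flagging and addressing the discrepancy between the $\mathsf{V}_{\gamma+1}$-formulation in the statement and the $H_{\gamma^{+}}$-formulation in Theorem~\ref{general universality theorem}, a point the paper leaves implicit.
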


As a corollary of Theorem \ref{big thm from woodin second}, if the $\mathsf{HOD}$ Hypothesis holds and $\delta$ is $\mathsf{HOD}$-supercompact, then there is no non-trivial elementary embedding $j: \mathsf{HOD}\rightarrow \mathsf{HOD}$ such that $\delta\leq crit(j)$.

The following  definition of super-$\mathsf{HOD}$ cardinal is isolated from the proof of Theorem \ref{big thm from woodin first} in \cite[Theorem 193]{Woodin 2}. From Theorem \ref{definition equivalence}, Definition \ref{sup HOD cardinal} provides a different and equivalent definition of $\mathsf{HOD}$-supercompact cardinal.

\begin{definition}\label{sup HOD cardinal}
Define that $\kappa$ is a super-$\mathsf{HOD}$ cardinal if for any $\lambda>\kappa$ and any $a\in \mathsf{V}_{\lambda}$, there exist $j: \mathsf{V}_{\lambda_0+\omega}\rightarrow \mathsf{V}_{\lambda+\omega}, a_0\in \mathsf{V}_{\lambda_0}$ and $\kappa_0<\lambda_0<\kappa$ such that $crit(j)=\kappa_0, j(\kappa_0)=\kappa, j(a_0)=a$ and $j(\mathsf{HOD}\cap \mathsf{V}_{\lambda_0})=\mathsf{HOD}\cap \mathsf{V}_{\lambda}$.
\end{definition}

\begin{lemma}\label{key equivalence}
Suppose $\lambda>\kappa$ are uncountable regular cardinals, $|\mathsf{V}_{\lambda}|=\lambda$ and $\mathsf{HOD}\cap \mathsf{V}_{\lambda} = (\mathsf{HOD})^{\mathsf{V}_{\lambda}}$. Then the following two statements are equivalent:
\begin{enumerate}[(1)]
  \item There exists an elementary embedding $j: \mathsf{V}\rightarrow M$ such that $crit(j)=\kappa, j(\kappa)>\lambda, M^{\mathsf{V}_{\lambda}}\subseteq M$ and $j(\mathsf{HOD}\cap \mathsf{V}_{\kappa})\cap \mathsf{V}_{\lambda}=\mathsf{HOD}\cap \mathsf{V}_{\lambda}$.
  \item  There exists a normal fine $\kappa$-complete ultrafilter $U$ on $P_{\kappa}(\mathsf{V}_{\lambda})$ such that $Z \in U$ where $Z=\{X \prec \mathsf{V}_{\lambda}$: the transitive collapse of $X$ is $\mathsf{V}_{\theta}$ for some $\theta$  such that $\mathsf{HOD}\cap \mathsf{V}_{\theta} = (\mathsf{HOD})^{\mathsf{V}_{\theta}}$\}.
\end{enumerate}
\end{lemma}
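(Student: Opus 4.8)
The plan is to run the classical equivalence between supercompactness-type elementary embeddings and normal fine ultrafilters on $P_{\kappa}(\mathsf{V}_{\lambda})$, keeping careful track of how the $\mathsf{HOD}$-agreement clause in (1) matches ``$Z\in U$'' in (2). Two preliminaries will be used throughout. First, since $|\mathsf{V}_{\lambda}|=\lambda$ with $\lambda$ regular uncountable, $\lambda$ is inaccessible, so $\lambda^{<\kappa}=\lambda$, $|P_{\kappa}(\mathsf{V}_{\lambda})|=\lambda$, and any inner model $M$ closed under $\mathsf{V}_{\lambda}$-sequences satisfies $\mathsf{V}_{\lambda+1}\subseteq M$; in particular such an $M$ computes $\mathsf{V}_{\lambda}$, transitive collapses of subsets of $\mathsf{V}_{\lambda}$, the relation $X\prec \mathsf{V}_{\lambda}$, and the object $(\mathsf{HOD})^{\mathsf{V}_{\lambda}}$ correctly. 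Second, $\mathsf{HOD}$ is definable by a formula $\varphi_{\mathsf{HOD}}$ which in any inner model $N$ defines $(\mathsf{HOD})^{N}$; hence if $j:\mathsf{V}\rightarrow M$ is elementary, then applying $j$ to the true statement ``$\mathsf{HOD}\cap \mathsf{V}_{\kappa}=\{x\in \mathsf{V}_{\kappa}:\varphi_{\mathsf{HOD}}(x)\}$'' yields
\[
j(\mathsf{HOD}\cap \mathsf{V}_{\kappa}) = (\mathsf{HOD})^{M}\cap (\mathsf{V}_{j(\kappa)})^{M}.
\]

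For $(1)\Rightarrow(2)$, given $j:\mathsf{V}\rightarrow M$ as in (1), I set $A\in U \iff j[\mathsf{V}_{\lambda}]\in j(A)$ for $A\subseteq P_{\kappa}(\mathsf{V}_{\lambda})$. Since $M^{\mathsf{V}_{\lambda}}\subseteq M$ we get $j[\mathsf{V}_{\lambda}]\in M$ and $\mathsf{V}_{\lambda+1}\subseteq M$, and since $j(\kappa)>\lambda$ the set $j[\mathsf{V}_{\lambda}]$ lies in $j(P_{\kappa}(\mathsf{V}_{\lambda}))=P_{j(\kappa)}(j(\mathsf{V}_{\lambda}))^{M}$ (a bijection between $j[\mathsf{V}_{\lambda}]$ and $\lambda$ lies in $M$, so its $M$-cardinality is $\lambda<j(\kappa)$), so $U$ is a genuine ultrafilter; its fineness, normality and $\kappa$-completeness follow from $crit(j)=\kappa$ in the usual way. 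To see $Z\in U$, i.e.\ $j[\mathsf{V}_{\lambda}]\in j(Z)$: the map $j\upharpoonright \mathsf{V}_{\lambda}$ is elementary from $(\mathsf{V}_{\lambda},\in)$ onto $(j[\mathsf{V}_{\lambda}],\in)$, so $M\models j[\mathsf{V}_{\lambda}]\prec j(\mathsf{V}_{\lambda})$ and the transitive collapse of $j[\mathsf{V}_{\lambda}]$ is $\mathsf{V}_{\lambda}=(\mathsf{V}_{\lambda})^{M}$, which forces the witnessing ordinal in the definition of $j(Z)$ to be $\lambda$; so all that remains is $M\models (\mathsf{HOD})^{M}\cap(\mathsf{V}_{\lambda})^{M}=(\mathsf{HOD})^{(\mathsf{V}_{\lambda})^{M}}$. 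By the displayed identity and hypothesis (1), $(\mathsf{HOD})^{M}\cap \mathsf{V}_{\lambda}=j(\mathsf{HOD}\cap \mathsf{V}_{\kappa})\cap \mathsf{V}_{\lambda}=\mathsf{HOD}\cap \mathsf{V}_{\lambda}$, which by the standing hypothesis $\mathsf{HOD}\cap \mathsf{V}_{\lambda}=(\mathsf{HOD})^{\mathsf{V}_{\lambda}}$ equals $(\mathsf{HOD})^{\mathsf{V}_{\lambda}}$; this equality of subsets of $\mathsf{V}_{\lambda}\in M$ is absolute, hence also holds in $M$, as required.

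For $(2)\Rightarrow(1)$, let $j:\mathsf{V}\rightarrow M$ be the ultrapower embedding given by $U$. The usual ultrapower analysis of a normal fine $\kappa$-complete ultrafilter on $P_{\kappa}(\mathsf{V}_{\lambda})$ gives $crit(j)=\kappa$, $[\mathrm{id}]_{U}=j[\mathsf{V}_{\lambda}]$, $j(\kappa)>\lambda$ (since $[\mathrm{id}]_{U}\in j(P_{\kappa}(\mathsf{V}_{\lambda}))$ has $M$-cardinality $\geq\lambda$), and $M^{\mathsf{V}_{\lambda}}\subseteq M$ with $\mathsf{V}_{\lambda+1}\subseteq M$; so three of the four assertions of (1) hold and only $j(\mathsf{HOD}\cap \mathsf{V}_{\kappa})\cap \mathsf{V}_{\lambda}=\mathsf{HOD}\cap \mathsf{V}_{\lambda}$ remains. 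By the displayed identity, $j(\mathsf{HOD}\cap \mathsf{V}_{\kappa})\cap \mathsf{V}_{\lambda}=(\mathsf{HOD})^{M}\cap \mathsf{V}_{\lambda}$, so by the standing hypothesis it suffices to show $(\mathsf{HOD})^{M}\cap \mathsf{V}_{\lambda}=(\mathsf{HOD})^{\mathsf{V}_{\lambda}}$. Now $Z\in U$ unwinds to $[\mathrm{id}]_{U}=j[\mathsf{V}_{\lambda}]\in j(Z)$; reading the definition of $j(Z)$ in $M$ and again using that the transitive collapse of $j[\mathsf{V}_{\lambda}]$ is $\mathsf{V}_{\lambda}=(\mathsf{V}_{\lambda})^{M}$ (so the witnessing ordinal is $\lambda$), we read off exactly $M\models (\mathsf{HOD})^{M}\cap(\mathsf{V}_{\lambda})^{M}=(\mathsf{HOD})^{(\mathsf{V}_{\lambda})^{M}}$, i.e.\ $(\mathsf{HOD})^{M}\cap \mathsf{V}_{\lambda}=(\mathsf{HOD})^{\mathsf{V}_{\lambda}}$, completing the argument.

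I expect the only genuinely delicate part to be the $\mathsf{HOD}$-bookkeeping: that the defining formula for $\mathsf{HOD}$ commutes with $j$ and with restriction to $\mathsf{V}_{\kappa}$ (which is what gives the displayed identity), that for the distinguished generator $j[\mathsf{V}_{\lambda}]=[\mathrm{id}]_{U}$ the transitive collapse is precisely $\mathsf{V}_{\lambda}$ (pinning the parameter $\theta$ in the definition of $Z$ down to $\lambda$), and that the standing hypothesis $\mathsf{HOD}\cap \mathsf{V}_{\lambda}=(\mathsf{HOD})^{\mathsf{V}_{\lambda}}$ is exactly what converts ``$(\mathsf{HOD})^{M}$ agrees with $\mathsf{HOD}$ below $\lambda$'' into ``$(\mathsf{V}_{\lambda})^{M}$ is $\mathsf{HOD}$-correct''. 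Everything else is the classical measure/embedding dictionary for supercompactness together with the routine absoluteness afforded by $\mathsf{V}_{\lambda+1}\subseteq M$ (in particular $(\mathsf{V}_{\lambda})^{M}=\mathsf{V}_{\lambda}$ and $M$'s correct computation of transitive collapses, of $\prec$, and of $(\mathsf{HOD})^{\mathsf{V}_{\lambda}}$).
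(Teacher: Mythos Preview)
Your proof is correct and follows essentially the same route as the paper: reduce to checking that $Z\in U$ is equivalent to the $\mathsf{HOD}$-agreement clause by computing $j(Z)$, observing that the transitive collapse of the seed $j[\mathsf{V}_\lambda]$ is $\mathsf{V}_\lambda$ itself, and invoking the standing hypothesis $\mathsf{HOD}\cap\mathsf{V}_\lambda=(\mathsf{HOD})^{\mathsf{V}_\lambda}$. The only cosmetic difference is bookkeeping: the paper treats $\mathsf{HOD}\cap\mathsf{V}_\kappa$ as a set parameter in the definition of $Z$ (noting that $\theta<\kappa$ for $X\in P_\kappa(\mathsf{V}_\lambda)$) and reads off $j(Z)$ with $j(\mathsf{HOD}\cap\mathsf{V}_\kappa)$ in place of $\mathsf{HOD}$, whereas you treat $\mathsf{HOD}$ as a definable class and pass through your displayed identity $j(\mathsf{HOD}\cap\mathsf{V}_\kappa)=(\mathsf{HOD})^M\cap(\mathsf{V}_{j(\kappa)})^M$; these are equivalent and your identity is exactly what connects the two viewpoints.
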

\begin{proof}\label{}
It suffices to check that $Z \in U$  iff $j(\mathsf{HOD}\cap \mathsf{V}_{\kappa})\cap \mathsf{V}_{\lambda}=\mathsf{HOD}\cap \mathsf{V}_{\lambda}$. Note that $Z \in U$  iff $\{j(x): x\in \mathsf{V}_{\lambda}\}\in j(Z)$. Note that $j(Z)=\{X \prec \mathsf{V}_{j(\lambda)}$: the transitive collapse of $X$ is $\mathsf{V}_{\theta}$ for some $\theta$  such that $j(\mathsf{HOD}\cap \mathsf{V}_{\kappa})\cap \mathsf{V}_{\theta} = (\mathsf{HOD})^{\mathsf{V}_{\theta}}$\}. Since the transitive collapse of $\{j(x): x\in \mathsf{V}_{\lambda}\}$ is $\mathsf{V}_{\lambda}$ and $\mathsf{HOD}\cap \mathsf{V}_{\lambda} = (\mathsf{HOD})^{\mathsf{V}_{\lambda}}$, we have $\{j(x): x\in \mathsf{V}_{\lambda}\}\in j(Z)$ iff $j(\mathsf{HOD}\cap \mathsf{V}_{\kappa})\cap \mathsf{V}_{\lambda} = (\mathsf{HOD})^{\mathsf{V}_{\lambda}}=\mathsf{HOD}\cap \mathsf{V}_{\lambda}$.
\end{proof}

\begin{theorem}\label{definition equivalence}
The following three statements are equivalent:
\begin{enumerate}[(1)]
  \item $\kappa$ is $\mathsf{HOD}$-supercompact.
  \item $\kappa$ is a super-$\mathsf{HOD}$ cardinal.
  \item  For any $\lambda>\kappa$, there exists a normal fine $\kappa$-complete ultrafilter $U$ on $P_{\kappa}(\mathsf{V}_{\lambda})$ such that $Z \in U$ where $Z=\{X \prec \mathsf{V}_{\lambda}$: the transitive collapse of $X$ is $\mathsf{V}_{\theta}$ for some $\theta$  such that $\mathsf{HOD}\cap \mathsf{V}_{\theta} = (\mathsf{HOD})^{\mathsf{V}_{\theta}}$\}.\footnote{The isolation of this statement as the equivalence of $\mathsf{HOD}$-supercompact cardinal is due to Woodin from \cite{Woodin 5}.}
\end{enumerate}
\end{theorem}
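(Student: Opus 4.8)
The plan is to prove the implications $(3)\Rightarrow(1)$, $(1)\Rightarrow(2)$, $(2)\Rightarrow(3)$, using Lemma~\ref{key equivalence} as the bridge between the ultrafilter formulation and the embedding formulation, and a reflection argument in the spirit of Magidor's Theorem~\ref{Magidor supercompact} as the bridge between the ``global'' embeddings of $(1)$ and the ``small'' embeddings of Definition~\ref{sup HOD cardinal}. The first move is a quantifier reduction: call $\lambda>\kappa$ \emph{suitable} if $\lambda$ is regular, $\lvert\mathsf{V}_\lambda\rvert=\lambda$, and $\mathsf{HOD}\cap\mathsf{V}_\lambda=(\mathsf{HOD})^{\mathsf{V}_\lambda}$; there is a proper class of suitable $\lambda$ (the $\mathsf{HOD}$-agreement clause holds whenever $\mathsf{V}_\lambda\prec_{\Sigma_2}\mathsf{V}$, since ``$x\in\mathsf{OD}$'' is uniformly $\Sigma_2$, and the class of such $\lambda$ is club). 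Each of $(1)$, $(2)$, $(3)$ is a ``for all $\lambda>\kappa$'' statement that need only be verified for suitable $\lambda$: in $(1)$, an embedding witnessing $\mathsf{HOD}$-supercompactness at a suitable $\lambda^\ast>\lambda$ also witnesses it at $\lambda$, since $M^{\mathsf{V}_{\lambda^\ast}}\subseteq M$ gives $M^{\mathsf{V}_\lambda}\subseteq M$ and the $\mathsf{HOD}$-agreement at $\lambda^\ast$ restricts to $\lambda$; in $(2)$, a super-$\mathsf{HOD}$ witness at a suitable $\lambda^\ast>\lambda$ restricts to one at $\lambda$ once $\lambda$ and $a$ are folded into a single parameter (so $\lambda\in\mathrm{ran}(j)$) and one cuts $j$ down to $\mathsf{V}_{\lambda_0+\omega}$ for $\lambda_0=j^{-1}(\lambda)$; and in $(3)$, an ultrafilter on $P_\kappa(\mathsf{V}_{\lambda^\ast})$ concentrating on the corresponding set $Z^\ast$ projects, along $Y\mapsto Y\cap\mathsf{V}_\lambda$, to one on $P_\kappa(\mathsf{V}_\lambda)$ concentrating on $Z$.

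For $(3)\Leftrightarrow(1)$ it now suffices to observe that, for suitable $\lambda$, Lemma~\ref{key equivalence} applies with $\kappa$ and $\lambda$, its clause $(1)$ being exactly the statement that some elementary embedding witnesses $\mathsf{HOD}$-supercompactness of $\kappa$ at $\lambda$ and its clause $(2)$ being our clause $(3)$ at $\lambda$; thus the two properties agree at every suitable $\lambda$, hence as universally quantified statements. For $(1)\Rightarrow(2)$, given $\lambda$ and $a\in\mathsf{V}_\lambda$, fix a suitable $\mu$ far above $\lambda$ and an embedding $i\colon\mathsf{V}\to M$ with $\mathrm{crit}(i)=\kappa$, $i(\kappa)>\mu$, $M^{\mathsf{V}_\mu}\subseteq M$ and $i(\mathsf{HOD}\cap\mathsf{V}_\kappa)\cap\mathsf{V}_\mu=\mathsf{HOD}\cap\mathsf{V}_\mu$; then $i\upharpoonright\mathsf{V}_{\lambda+\omega}$ belongs to $M$ and witnesses, inside $M$, the existence of an elementary $e\colon\mathsf{V}_{\lambda+\omega}\to\mathsf{V}_{i(\lambda)+\omega}$ with $\mathrm{crit}(e)=\kappa<i(\kappa)$, $e(\kappa)=i(\kappa)$, $e(a)=i(a)$, and $e(\mathsf{HOD}\cap\mathsf{V}_\lambda)=\mathsf{HOD}\cap\mathsf{V}_{i(\lambda)}$; pulling this back along $i$ yields precisely a super-$\mathsf{HOD}$ witness at $\lambda$. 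For $(2)\Rightarrow(3)$, one reverses Magidor's argument: letting the parameter $a$ range, the super-$\mathsf{HOD}$ embeddings $j\colon\mathsf{V}_{\lambda_0+\omega}\to\mathsf{V}_{\lambda+\omega}$ combine into a normal fine $\kappa$-complete ultrafilter on $P_\kappa(\mathsf{V}_\lambda)$, and the clause $j(\mathsf{HOD}\cap\mathsf{V}_{\lambda_0})=\mathsf{HOD}\cap\mathsf{V}_\lambda$ forces its ranges to collapse onto sets $\mathsf{V}_\theta$ of the kind described in $Z$, so that $Z$ is in the ultrafilter (alternatively, derive $(1)$ from $(2)$ this way and invoke $(1)\Rightarrow(3)$).

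The heart of the matter is the $\mathsf{HOD}$-bookkeeping, since $\mathsf{HOD}$ appears in the three formulations under different guises — as $j(\mathsf{HOD}\cap\mathsf{V}_\kappa)\cap\mathsf{V}_\lambda$ in the embedding form, as the true $\mathsf{HOD}\cap\mathsf{V}_\lambda$, and as the \emph{internally} computed $(\mathsf{HOD})^{\mathsf{V}_\theta}$ in the definition of $Z$ — and these must be reconciled along every embedding appearing in the proof. This is made to work by three facts: a supercompact (hence $\mathsf{HOD}$-supercompact) $\kappa$ is $\Sigma_2$-correct, so $\mathsf{HOD}\cap\mathsf{V}_\kappa=(\mathsf{HOD})^{\mathsf{V}_\kappa}$, and this equality is carried upward by the embeddings in play; $\mathsf{HOD}\cap\mathsf{V}_\theta=(\mathsf{HOD})^{\mathsf{V}_\theta}$ holds for a proper class of $\theta$, which both legitimizes the restriction to suitable $\lambda$ and makes $Z$ large; and a closure hypothesis $M^{\mathsf{V}_\lambda}\subseteq M$ forces $M$ to compute $\mathsf{V}_{\lambda+1}$, hence $(\mathsf{HOD})^{\mathsf{V}_\lambda}$, correctly. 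I expect each individual implication to be a short computation once this bookkeeping is in place; the difficulty is that the three formulations genuinely ``see'' $\mathsf{HOD}$ differently and must be aligned simultaneously, and that the arbitrary-$\lambda$ versions have to be cashed out in terms of the suitable ones without losing the $Z\in U$ clause.
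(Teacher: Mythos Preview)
Your proposal is correct and follows the same cyclic route $(1)\Rightarrow(2)\Rightarrow(3)\Rightarrow(1)$ as the paper, with the same use of Lemma~\ref{key equivalence} for $(3)\Rightarrow(1)$ and the same restriction to suitable $\lambda$. The one substantive difference is in $(2)\Rightarrow(3)$: you build the ultrafilter on $P_\kappa(\mathsf{V}_\lambda)$ Magidor-style, by letting the parameter $a$ range over $\mathsf{V}_\lambda$ and combining the resulting small embeddings into a single $U$, then arguing that every such embedding has range collapsing into $Z$. The paper instead fixes a \emph{single} super-$\mathsf{HOD}$ embedding $\pi\colon\mathsf{V}_{\bar\lambda+\omega}\to\mathsf{V}_{\lambda+\omega}$, derives from it the small ultrafilter $\bar U\in\mathsf{V}_{\bar\lambda+\omega}$ on $P_{\bar\kappa}(\mathsf{V}_{\bar\lambda})$, and takes $U=\pi(\bar U)$; then $Z\in U$ reduces to $\sigma_\pi=\pi[\mathsf{V}_{\bar\lambda}]\in Z$, which follows immediately from $\mathsf{HOD}\cap\mathsf{V}_{\bar\lambda}=(\mathsf{HOD})^{\mathsf{V}_{\bar\lambda}}$. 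The push-forward trick is shorter because it sidesteps the well-definedness and compatibility checks your combination argument needs, but your approach is equally valid and arguably more transparent about why the $\mathsf{HOD}$ clause in Definition~\ref{sup HOD cardinal} translates into $Z\in U$. For $(1)\Rightarrow(2)$ the paper simply cites \cite[Lemma~133]{Woodin 2}, whose content is exactly the reflection argument you spell out.
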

\begin{proof}\label{}
$(1)\Rightarrow (2)$: From \cite[Lemma 133]{Woodin 2}, if $\delta$ is $\mathsf{HOD}$-supercompact, then $\delta$ is super-$\mathsf{HOD}$ cardinal.

$(2)\Rightarrow (3)$: Suppose $\kappa$ is super-$\mathsf{HOD}$ cardinal, $\lambda > \kappa$, and $\mathsf{V}_{\lambda}$ is a $\Sigma_2$ elementary substructure of $\mathsf{V}$ such that $|\mathsf{V}_{\kappa}|=\kappa$ and $\mathsf{HOD}\cap \mathsf{V}_{\kappa}=(\mathsf{HOD})^{\mathsf{V}_{\kappa}}$. We show that there exists a normal fine $\kappa$-complete ultrafilter $U$ on $P_{\kappa}(\mathsf{V}_{\lambda})$ such that $Z \in U$ where $Z=\{X \prec \mathsf{V}_{\lambda}$: the transitive collapse of $X$ is $\mathsf{V}_{\theta}$ for some $\theta$  such that $\mathsf{HOD}\cap \mathsf{V}_{\theta} = (\mathsf{HOD})^{\mathsf{V}_{\theta}}$\}.

Since $\kappa$ is super-$\mathsf{HOD}$, there exists $\overline{\kappa}< \overline{\lambda} < \kappa$ and an elementary embedding $\pi: \mathsf{V}_{\overline{\lambda}+\omega} \to \mathsf{V}_{\lambda+\omega}$ such that $crit(\pi) = \overline{\kappa}, \pi(\overline{\lambda}) = \lambda$, and $\pi(\mathsf{HOD}\cap \mathsf{V}_{\overline{\lambda}}) = \mathsf{HOD}\cap \mathsf{V}_{\lambda}$. Let $\overline{U}$ be the $\overline{\kappa}$-complete normal fine ultrafilter on $P_{\overline{\kappa}}(\mathsf{V}_{\overline{\lambda}})$ given by $\pi$. Thus $\overline{U} \in \mathsf{V}_{\overline{\lambda}+\omega}$. Then $\pi(\overline{U})$ is a $\kappa$-complete normal fine ultrafilter on $P_{\kappa}(\mathsf{V}_{\lambda})$.

It suffices to show that $Z\in  \pi(\overline{U})$. Let $\pi(\overline{Z})=Z$ and $\sigma_{\pi}=\lbrace\pi(a)$ : $a \in \mathsf{V}_{\overline{\lambda}}\rbrace$. Since $\mathsf{HOD}\cap \mathsf{V}_{\lambda} = (\mathsf{HOD})^{\mathsf{V}_{\lambda}}$ and  $\pi(\mathsf{HOD}\cap \mathsf{V}_{\overline{\lambda}}) = \mathsf{HOD}\cap \mathsf{V}_{\lambda}$, we have $\mathsf{HOD}\cap \mathsf{V}_{\overline{\lambda}} = (\mathsf{HOD})^{\mathsf{V}_{\overline{\lambda}}}$.  Thus $\sigma_{\pi} \in Z$. Note that $\overline{Z}\in \overline{U}$ iff $\sigma_{\pi}\in \pi(\overline{Z})$. So $\overline{Z}\in \overline{U}$ and hence $Z\in  \pi(\overline{U})$.

$(3)\Rightarrow (1)$: Follows from Lemma \ref{key equivalence} since we can only consider $\lambda>\kappa$ such that $|\mathsf{V}_{\lambda}|=\lambda$ and $\mathsf{HOD}\cap \mathsf{V}_{\lambda} = (\mathsf{HOD})^{\mathsf{V}_{\lambda}}$.
\end{proof}

\begin{definition}\label{}
For regular cardinals $\delta<\kappa$, we say $(\delta, \kappa)$ is a $\mathsf{HOD}$-partition pair if there exists a  partition $\langle S_{\alpha}\mid \alpha<\delta\rangle\in \mathsf{HOD}$ of $\{\alpha<\kappa\mid cf(\alpha)=\omega\}$ into pairwise disjoint stationary sets.
\end{definition}
If $\mathsf{V}=\mathsf{HOD}$, then for any regular cardinals $\delta<\kappa$, $(\delta, \kappa)$ is a $\mathsf{HOD}$-partition pair. Note that the $\mathsf{HOD}$ Hypothesis implies that for any $\delta$ there is regular cardinal $\kappa>\delta$ such that $(\delta, \kappa)$ is a $\mathsf{HOD}$-partition pair.

\begin{theorem}\label{HOD partion pair}
(Woodin, \cite[Theorem 195]{Woodin 2})\quad If $\delta$ is $\mathsf{HOD}$-supercompact and for any $\gamma>\delta$ there is regular cardinal $\lambda>\gamma$ such that $(\gamma, \lambda)$ is a $\mathsf{HOD}$-partition pair. Then $\mathsf{HOD}$ is a weak extender model for $\delta$ supercompact.
\end{theorem}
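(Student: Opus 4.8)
The plan is to verify, for $N=\mathsf{HOD}$, the Magidor-style characterisation of ``weak extender model for $\delta$ supercompact'' given by clause~(2) of Theorem~\ref{characteration of extender model}. Fix $\kappa>\delta$; we must find $\alpha<\delta$ and an elementary $j\colon \mathsf{V}_{\alpha+1}\to \mathsf{V}_{\kappa+1}$ with $crit(j)=\bar\delta$, $j(\bar\delta)=\delta$, $j\upharpoonright(\mathsf{HOD}\cap \mathsf{V}_{\alpha})\in \mathsf{HOD}$ and $j(\mathsf{HOD}\cap \mathsf{V}_{\alpha})=\mathsf{HOD}\cap \mathsf{V}_{\kappa}$. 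Since $\delta$ is $\mathsf{HOD}$-supercompact it is super-$\mathsf{HOD}$ (Theorem~\ref{definition equivalence}). Let $<^{\ast}$ be the canonical well-order of $\mathsf{HOD}\cap \mathsf{V}_{\kappa}$, let $\rho$ be its order type (so $\rho\geq\kappa>\delta$, since all ordinals below $\kappa$ lie in $\mathsf{HOD}$), let $E\colon\rho\to \mathsf{HOD}\cap \mathsf{V}_{\kappa}$ be the collapsing bijection, and fix the partition $\vec T$ described in the third paragraph. Applying Definition~\ref{sup HOD cardinal} to a sufficiently large $\lambda$ and to a parameter $a\in \mathsf{V}_{\lambda}$ coding $\kappa,<^{\ast},\rho,E$ and $\vec T$, we obtain $\pi\colon \mathsf{V}_{\lambda_0+\omega}\to \mathsf{V}_{\lambda+\omega}$ with $crit(\pi)=\bar\delta<\bar\kappa<\lambda_0<\delta$, $\pi(\bar\delta)=\delta$, $\pi(\bar\kappa)=\kappa$, $\pi(a_0)=a$ and $\pi(\mathsf{HOD}\cap \mathsf{V}_{\lambda_0})=\mathsf{HOD}\cap \mathsf{V}_{\lambda}$, where $\bar\kappa:=\pi^{-1}(\kappa)$. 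Set $\alpha:=\bar\kappa$ and $j:=\pi\upharpoonright \mathsf{V}_{\bar\kappa+1}$. Then $crit(j)=\bar\delta$, $j(\bar\delta)=\delta$, and, since $\mathsf{HOD}\cap \mathsf{V}_{\bar\kappa}=(\mathsf{HOD}\cap \mathsf{V}_{\lambda_0})\cap \mathsf{V}_{\bar\kappa}$ and $\pi$ commutes with intersection, $j(\mathsf{HOD}\cap \mathsf{V}_{\bar\kappa})=(\mathsf{HOD}\cap \mathsf{V}_{\lambda})\cap \mathsf{V}_{\kappa}=\mathsf{HOD}\cap \mathsf{V}_{\kappa}$. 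So only the clause $j\upharpoonright(\mathsf{HOD}\cap \mathsf{V}_{\bar\kappa})\in \mathsf{HOD}$ remains.

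I reduce this to showing $\pi''\bar\rho\in \mathsf{HOD}$, where $\bar\rho:=\pi^{-1}(\rho)<\delta$. Taking $\lambda$ large enough that $\mathsf{V}_{\lambda}$ (and hence, via $\pi$, $\mathsf{V}_{\lambda_0}$) correctly computes the instances of ordinal-definability involved, $a_0$ codes the canonical well-order of $\mathsf{HOD}\cap \mathsf{V}_{\bar\kappa}$, its order type $\bar\rho$, and a bijection $E_0\colon\bar\rho\to \mathsf{HOD}\cap \mathsf{V}_{\bar\kappa}$ with $\pi(E_0)=E$. Since $\pi$ preserves the canonical well-orders and $\pi(E_0(\xi))=E(\pi(\xi))$ for $\xi<\bar\rho$, we get $j\upharpoonright(\mathsf{HOD}\cap \mathsf{V}_{\bar\kappa})=E\circ(\pi\upharpoonright\bar\rho)\circ E_0^{-1}$. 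As $E,E_0\in \mathsf{HOD}$, this lies in $\mathsf{HOD}$ iff $\pi\upharpoonright\bar\rho\in \mathsf{HOD}$, and since an increasing function with ordinal domain is determined by its range, iff $\pi''\bar\rho\in \mathsf{HOD}$.

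To get hold of $\pi''\bar\rho$ I use the partition-pair hypothesis. Since $\rho>\delta$, there are a regular $\mu_0>\rho$ and a partition $\vec T=\langle T_{\nu}\mid\nu<\rho\rangle\in \mathsf{HOD}$ of $cof(\omega)\cap\mu_0$ into stationary sets (this is the $\vec T$ put into $a$; take $\lambda>\mu_0$). By elementarity $\vec{\bar T}:=\pi^{-1}(\vec T)=\langle\bar T_{\nu}\mid\nu<\bar\rho\rangle\in \mathsf{HOD}$ is a partition of $cof(\omega)\cap\bar\mu_0$ (where $\bar\mu_0:=\pi^{-1}(\mu_0)<\delta$) into sets stationary in $\bar\mu_0$, with $\pi(\bar T_{\nu})=T_{\pi(\nu)}$ for $\nu<\bar\rho$. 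An elementary embedding between $\mathsf{V}$-segments with critical point above $\omega$ is continuous at every ordinal of its domain of cofinality $\omega$; using this (and the resulting preservation of ``being of cofinality $\omega$'') one checks, with $\eta:=\sup\pi''\bar\mu_0<\mu_0$, that $C:=\mathrm{cl}(\pi''\bar\mu_0)$ is a club in $\eta$ and $C\cap cof(\omega)=\pi''(cof(\omega)\cap\bar\mu_0)=\bigcup_{\nu<\bar\rho}\pi''\bar T_{\nu}$. Hence, for $\nu<\rho$: if $\nu=\pi(\xi)$ with $\xi<\bar\rho$, then $\pi''\bar T_{\xi}\subseteq T_{\nu}\cap\eta$ is stationary in $\eta$; while if $\nu\notin\pi''\bar\rho$, then, the blocks of $\vec T$ being pairwise disjoint, $C$ is a club of $\eta$ disjoint from $T_{\nu}\cap\eta$. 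Therefore $\pi''\bar\rho=\{\nu<\rho\mid T_{\nu}\cap\eta\text{ is stationary in }\eta\}$, with stationarity computed in $\mathsf{V}$.

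The final, and main, step is to show that this set belongs to $\mathsf{HOD}$. Since $\vec T,\rho,\eta\in \mathsf{HOD}$ and stationarity is upward absolute, it is enough that for $\nu\notin\pi''\bar\rho$ the set $T_{\nu}\cap\eta$ be non-stationary already in $\mathsf{HOD}$, i.e.\ that a club of $\eta$ disjoint from it can be found inside $\mathsf{HOD}$ (the club $C$ witnesses this in $\mathsf{V}$, but $C\notin \mathsf{HOD}$). I expect this to be the hard part. It is precisely here that the full force of ``$\vec{\bar T}$ is a partition of $cof(\omega)\cap\bar\mu_0$ into $\bar\rho$ sets stationary in $\mathsf{HOD}$'' must be exploited: the stationary blocks $\pi''\bar T_{\xi}$, $\xi<\bar\rho$, exhaust the cofinality-$\omega$ part of the club $C$, so the cofinality-$\omega$ ordinals below $\eta$ missed by all of them --- a set containing $T_{\nu}\cap\eta$ for each $\nu\notin\pi''\bar\rho$ --- must be thin enough that $\mathsf{HOD}$ sees it as non-stationary; carrying out this absoluteness argument, which has to negotiate the possible failure of downward absoluteness of stationarity from $\mathsf{V}$ to $\mathsf{HOD}$, is the real work. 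Granting it, $\pi''\bar\rho\in \mathsf{HOD}$, so $j=\pi\upharpoonright \mathsf{V}_{\bar\kappa+1}$ witnesses clause~(2) of Theorem~\ref{characteration of extender model} at $\kappa$; as $\kappa>\delta$ was arbitrary, $\mathsf{HOD}$ is a weak extender model for $\delta$ supercompact.
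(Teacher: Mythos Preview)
Your overall strategy is correct and matches Woodin's method: use the super-$\mathsf{HOD}$ reformulation to obtain $\pi$, then recover $\pi''\bar\rho$ via the stationary-coding trick (this is precisely the content of Lemma~\ref{isolated lemma}). However, the ``hard part'' you isolate at the end is a phantom difficulty. You have already shown
\[
\pi''\bar\rho=\{\nu<\rho : T_{\nu}\cap\eta\text{ is stationary in }\eta\},
\]
with stationarity computed in $\mathsf{V}$. But this set is ordinal-definable in $\mathsf{V}$ from the parameters $\vec T$ and $\eta$: whether a given subset of an ordinal is stationary is a first-order property of $(\mathsf{V},\in)$. Since $\vec T\in\mathsf{HOD}$ by hypothesis and $\eta$ is an ordinal, the displayed set is $\mathsf{OD}$ and therefore lies in $\mathsf{HOD}$ outright. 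No comparison of $\mathsf{V}$-stationarity with $\mathsf{HOD}$-stationarity is needed, and no absoluteness argument is required. (Incidentally, your remark that ``stationarity is upward absolute'' has the direction reversed: stationarity passes \emph{down} from $\mathsf{V}$ to $\mathsf{HOD}$, since $\mathsf{HOD}$ has fewer clubs; it is non-stationarity that need not pass down, which is exactly what generated your worry --- but the worry is moot.)

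This is how the paper handles the analogous step in the proof of Theorem~\ref{main result of the paper}: the sequence $\langle\tau_{\eta}:\eta<\delta,\,cf(\eta)>\omega\rangle$ is observed to lie in $\mathsf{HOD}^{\mathsf{V}_{\lambda}}=\mathsf{HOD}\cap \mathsf{V}_{\lambda}$ simply because it is definable from $\langle S_{\alpha}\rangle\in\mathsf{HOD}$, and then $\tau_{\eta_0}\in\mathsf{HOD}$ is read off as a coordinate of a sequence already in $\mathsf{HOD}$. With this one-line correction in place of your final paragraph, your argument is complete.
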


The following Theorem \ref{big thm from woodin in SEM} and Theorem \ref{equi be HC and MC} are a reformulation and summary of Woodin's results in \cite{Woodin 2} (eg. \cite[Theorem 212]{Woodin 2}, \cite[Theorem 19]{Woodin 1}, etc.).

\begin{theorem}\label{big thm from woodin in SEM}
(Woodin, \cite{Woodin 2})\quad Suppose $\delta$ is $\mathsf{HOD}$-supercompact.\footnote{Theorem 212 in \cite{Woodin 2} assumes that $\delta$ is extendible. In fact it suffices to assume that $\delta$ is $\mathsf{HOD}$-supercompact.} Then the following are equivalent:
\begin{enumerate}[(1)]
  \item The $\mathsf{HOD}$ Hypothesis.
  \item $\mathsf{HOD}$ is a weak extender model for $\delta$ supercompact.
  \item There exists a weak extender model $N$ for $\delta$ supercompact
  such that $N \subseteq \mathsf{HOD}$.
  \item Every singular cardinal $\gamma>\delta$ is singular in $\mathsf{HOD}$ and $\gamma^{+}=(\gamma^{+})^{\mathsf{HOD}}$.
  \item There is a proper class of regular cardinals that are not measurable in $\mathsf{HOD}$.
  \item For any $\gamma>\delta$ there is regular cardinal $\lambda>\gamma$ such that $(\gamma, \lambda)$ is a $\mathsf{HOD}$-partition pair.
\end{enumerate}
\end{theorem}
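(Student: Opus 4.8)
The plan is to establish all six equivalences through the cycle $(1)\Rightarrow(2)\Rightarrow(3)\Rightarrow(4)\Rightarrow(5)\Rightarrow(1)$, supplemented by the two side implications $(1)\Rightarrow(6)$ and $(6)\Rightarrow(2)$, which splices $(6)$ into the cycle. Throughout, the standing hypothesis that $\delta$ is $\mathsf{HOD}$-supercompact is exactly what licenses the two substantive inputs from Woodin's work, namely Theorem \ref{big thm from woodin first} and Theorem \ref{HOD partion pair}.

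Several links are nearly immediate. For $(1)\Rightarrow(2)$, apply Theorem \ref{big thm from woodin first} directly. For $(2)\Rightarrow(3)$, take $N=\mathsf{HOD}$. For $(6)\Rightarrow(2)$, apply Theorem \ref{HOD partion pair}. For $(5)\Rightarrow(1)$, use the contrapositive of Theorem \ref{strong mc imply mc}: a regular cardinal not measurable in $\mathsf{HOD}$ is not $\omega$-strongly measurable in $\mathsf{HOD}$, so a proper class of the former yields a proper class of the latter, which is precisely the $\mathsf{HOD}$ Hypothesis.

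The remaining three links form the bookkeeping core. For $(3)\Rightarrow(4)$: given a weak extender model $N\subseteq\mathsf{HOD}$ for $\delta$ supercompact and a singular cardinal $\gamma>\delta$ of $\mathsf{V}$, Theorem \ref{property of weak extender model} gives that $\gamma$ is singular in $N$ and $\gamma^{+}=(\gamma^{+})^{N}$; a cofinal map witnessing the singularity of $\gamma$ lies in $N\subseteq\mathsf{HOD}$, so $\gamma$ is singular in $\mathsf{HOD}$, and from the general monotonicity $(\gamma^{+})^{W_{1}}\le(\gamma^{+})^{W_{2}}$ for inner models $W_{1}\subseteq W_{2}$, applied along $N\subseteq\mathsf{HOD}\subseteq\mathsf{V}$, one gets $\gamma^{+}=(\gamma^{+})^{N}\le(\gamma^{+})^{\mathsf{HOD}}\le\gamma^{+}$, hence $(\gamma^{+})^{\mathsf{HOD}}=\gamma^{+}$. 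For $(4)\Rightarrow(5)$: for each singular $\gamma>\delta$ — and there is a proper class of such cardinals — condition $(4)$ forces $\gamma$ to be a cardinal of $\mathsf{HOD}$ with successor $\gamma^{+}$ there (otherwise $(\gamma^{+})^{\mathsf{HOD}}\le\gamma$), so $\gamma^{+}$ is a successor cardinal of $\mathsf{HOD}$, hence not measurable in $\mathsf{HOD}$, while being regular in $\mathsf{V}$; these $\gamma^{+}$ form a proper class of witnesses for $(5)$. For $(1)\Rightarrow(6)$: fix $\gamma>\delta$ and, using the $\mathsf{HOD}$ Hypothesis, pick a regular $\lambda>(2^{\gamma})^{\mathsf{HOD}}$ (so $\lambda>\gamma$) that is not $\omega$-strongly measurable in $\mathsf{HOD}$; reading Definition \ref{strongly measurable} with the parameter $\kappa=\gamma$, and since $(2^{\gamma})^{\mathsf{HOD}}<\lambda$, the failure of $\omega$-strong measurability at $\gamma$ says exactly that some partition $\langle S_{\alpha}\mid\alpha<\gamma\rangle\in\mathsf{HOD}$ of $\{\alpha<\lambda\mid \mathrm{cf}(\alpha)=\omega\}$ into stationary sets exists, i.e. $(\gamma,\lambda)$ is a $\mathsf{HOD}$-partition pair.

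The real depth sits in the two black boxes, Theorem \ref{big thm from woodin first} and Theorem \ref{HOD partion pair}, which carry the genuine content; granting those, the chief points requiring care are keeping straight the direction of the cardinal and cofinality comparisons across the tower $N\subseteq\mathsf{HOD}\subseteq\mathsf{V}$ in $(3)\Rightarrow(4)$, and, in $(1)\Rightarrow(6)$, correctly handling the parameter $\kappa$ in the definition of $\omega$-strong measurability so that the proper class supplied by the $\mathsf{HOD}$ Hypothesis can be thinned to lie above $(2^{\gamma})^{\mathsf{HOD}}$.
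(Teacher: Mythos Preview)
Your proof is correct and follows essentially the same approach as the paper: both invoke Theorem \ref{big thm from woodin first} for $(1)\Rightarrow(2)$, Theorem \ref{property of weak extender model} for getting to $(4)$, Theorem \ref{strong mc imply mc} for $(5)\Rightarrow(1)$, Theorem \ref{HOD partion pair} for $(6)\Rightarrow(2)$, and the successor-cardinal argument for $(4)\Rightarrow(5)$. Your organization into a single cycle $(1)\Rightarrow(2)\Rightarrow(3)\Rightarrow(4)\Rightarrow(5)\Rightarrow(1)$ with $(6)$ spliced in is marginally tidier than the paper's more scattered list of implications, but the content is the same.
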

\begin{proof}
By Theorem \ref{big thm from woodin first}, $(1)\Rightarrow (2)$. By Theorem \ref{property of weak extender model}, $(2)\Rightarrow (4)$. By Theorem \ref{strong mc imply mc}, $(5)\Rightarrow (1)$. By Theorem \ref{HOD partion pair}, $(6)\Rightarrow (2)$. It is a theorem in $\mathsf{ZFC}$ that $(2)\Rightarrow (3)$, $(1)\Rightarrow (6)$, $(4)\Rightarrow (1)$ and $(4)\Rightarrow (5)$. By Theorem \ref{characteration of extender model} and Theorem \ref{Magidor supercompact}, $\delta$ is supercompact in $\mathsf{HOD}$ if and only if $\mathsf{HOD}$ is a weak extender model for $\delta$ supercompact.

$(4)\Rightarrow (1)$: if (4) holds, then $\lbrace\gamma^{+}: \gamma>\delta$ is a singular cardinal$\rbrace$ is a proper class of regular cardinals which are not $\omega$-strongly measurable in $\mathsf{HOD}$. By the similar argument, we have  $(4)\Rightarrow (5)$.

Finally, it suffices to show that $(3)\Rightarrow (1)$. By Theorem \ref{property of weak extender model}, $(3)\Rightarrow (4)$. Since $(4)\Rightarrow (1)$, we have $(3)\Rightarrow (1)$.
\end{proof}

\begin{theorem}\label{equi be HC and MC}
(Woodin, \cite{Woodin 2})\quad Suppose $\delta$ is extendible. Then the following are equivalent:
\begin{enumerate}[(1)]
  \item The $\mathsf{HOD}$ Hypothesis.
  \item There exists a regular cardinal $\kappa\geqslant\delta$ such that $\kappa$ is not measurable in $\mathsf{HOD}$.
   \item There exists a regular cardinal $\kappa\geqslant\delta$ such that $(\delta,\kappa)$ is a $\mathsf{HOD}$-partition pair.
   \item For any cardinal $\kappa$, if $\kappa$ is $\mathsf{HOD}$-supercompact, then $\mathsf{HOD}$ is a weak extender model for $\kappa$-supercompact.
   \item There exists a regular cardinal $\kappa\geqslant\delta$ such that $\kappa$ is not $\omega$-strongly measurable in $\mathsf{HOD}$.
\end{enumerate}
\end{theorem}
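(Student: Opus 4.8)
Since $\delta$ is extendible, $\delta$ is $\mathsf{HOD}$-supercompact by Theorem~\ref{extendible imply sc}, so all the equivalences of Theorem~\ref{big thm from woodin in SEM} hold for this $\delta$; I refer to its clauses by number. The strategy is to extract every implication out of $(1)$, together with $(4)\Rightarrow(1)$, essentially for free from the results already available, and then to close the loop with a single reflection argument. The free implications: $(1)\Rightarrow(2)$, because clause~$(5)$ of Theorem~\ref{big thm from woodin in SEM} yields a proper class of regular cardinals not measurable in $\mathsf{HOD}$, any one of them above $\delta$ witnessing $(2)$ (for instance $\gamma^{+}$ for a singular $\gamma>\delta$, which by clause~$(4)$ is a successor cardinal of $\mathsf{HOD}$); $(2)\Rightarrow(5)$ by the contrapositive of Theorem~\ref{strong mc imply mc}, whence also $(1)\Rightarrow(5)$; $(1)\Rightarrow(3)$, because the $\mathsf{HOD}$ Hypothesis provides a regular $\kappa>\max(\delta,(2^{\delta})^{\mathsf{HOD}})$ that is not $\omega$-strongly measurable in $\mathsf{HOD}$, so instantiating Definition~\ref{strongly measurable} at $\overline{\kappa}=\delta$ gives a partition $\langle S_{\alpha}\mid\alpha<\delta\rangle\in\mathsf{HOD}$ of $cof(\omega)\cap\kappa$ into stationary sets, i.e.\ $(\delta,\kappa)$ is a $\mathsf{HOD}$-partition pair; $(1)\Rightarrow(4)$, directly from Theorem~\ref{big thm from woodin first} applied to any $\mathsf{HOD}$-supercompact cardinal; and $(4)\Rightarrow(1)$, by applying $(4)$ to $\delta$ itself, obtaining that $\mathsf{HOD}$ is a weak extender model for $\delta$ supercompact, and then invoking Theorem~\ref{big thm from woodin in SEM}. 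Thus only $(5)\Rightarrow(1)$ and $(3)\Rightarrow(1)$ remain.

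Both follow by reflecting a single local witness upward along extendibility embeddings, which is the heart of the argument. For $(5)\Rightarrow(1)$: let $\kappa\geq\delta$ be regular and not $\omega$-strongly measurable in $\mathsf{HOD}$, and let $\eta$ be arbitrary. Using extendibility, fix an elementary $j:\mathsf{V}_{\alpha}\to\mathsf{V}_{\beta}$ with $crit(j)=\delta$ and $j(\delta)>\alpha$, where $\alpha>\max(\kappa,\eta)$ is taken so that $\mathsf{V}_{\alpha}$ --- and, arranging matters, $\mathsf{V}_{\beta}$ --- is $\Sigma_2$-correct in $\mathsf{V}$ (there is a club of such ordinals), hence in particular computes $\mathsf{HOD}$ correctly below its rank, because $\mathsf{HOD}$ is $\Sigma_2$-definable, and is a strong limit. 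The assertion ``$\kappa$ is a regular cardinal that is not $\omega$-strongly measurable in $\mathsf{HOD}$'' is determined by $\mathsf{V}_{\theta}$ and $\mathsf{HOD}\cap\mathsf{V}_{\theta}$ for any $\theta$ slightly above $(2^{\kappa})^{\mathsf{HOD}}$, so it holds in $\mathsf{V}_{\alpha}$; applying $j$ and transferring back through the $\Sigma_2$-correctness of $\mathsf{V}_{\beta}$, the same assertion holds of $j(\kappa)$ in $\mathsf{V}$. Since $\kappa\geq\delta$ gives $j(\kappa)\geq j(\delta)>\alpha>\eta$, we have produced a regular cardinal above $\eta$ that is not $\omega$-strongly measurable in $\mathsf{HOD}$, and as $\eta$ was arbitrary there is a proper class of them, i.e.\ the $\mathsf{HOD}$ Hypothesis. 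For $(3)\Rightarrow(1)$ one performs the same reflection on the witnessing partition: from a $\mathsf{HOD}$-partition pair $(\delta,\kappa)$ (so $\kappa>\delta$) and an arbitrary regular $\gamma>\delta$, choosing $\alpha>\max(\kappa,\gamma)$ and $j$ as above gives a genuine $\mathsf{HOD}$-partition pair $(j(\delta),j(\kappa))$ with $j(\kappa)>j(\delta)>\alpha>\gamma$; amalgamating the blocks indexed by ordinals in $[\gamma,j(\delta))$ into one block produces a $\mathsf{HOD}$-partition pair $(\gamma,j(\kappa))$. Hence clause~$(6)$ of Theorem~\ref{big thm from woodin in SEM} holds, and the $\mathsf{HOD}$ Hypothesis follows by that theorem. (Alternatively, $(2)\Rightarrow(1)$ can be obtained from the $\mathsf{HOD}$ Dichotomy Theorem~\ref{HOD Dichotomy}: failure of the $\mathsf{HOD}$ Hypothesis is failure of clause~$(4)$, so alternative~(b) of~\ref{HOD Dichotomy} holds, and then every regular cardinal above $\delta$ --- and, since an extendible cardinal is measurable in $\mathsf{HOD}$, also $\delta$ --- is measurable in $\mathsf{HOD}$, contradicting $(2)$.)

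The main obstacle is exactly this reflection step: one must confirm that ``$\kappa$ is regular and not $\omega$-strongly measurable in $\mathsf{HOD}$'' and ``$(\delta,\kappa)$ is a $\mathsf{HOD}$-partition pair'' are truly absolute between $\mathsf{V}$ and the rank-initial segments $\mathsf{V}_{\alpha},\mathsf{V}_{\beta}$ used in the embedding. This needs: that these segments compute $\mathsf{HOD}$ correctly up to their rank; that the power sets taken inside $\mathsf{HOD}$, and all clubs and stationary subsets of $\kappa$ and of $j(\kappa)$, already appear below $\alpha$ resp.\ $\beta$ and are computed there as in $\mathsf{V}$; and that the extendibility embedding can be chosen with both domain and range among such correct levels --- only the last of these genuinely uses extendibility beyond the $\mathsf{HOD}$-supercompactness already in hand, and it follows from the $\Sigma_2$-definability of $\mathsf{HOD}$ together with a routine reflection argument for extendible cardinals. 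Once that bookkeeping is settled, the theorem is an assembly of Theorems~\ref{extendible imply sc}, \ref{strong mc imply mc}, \ref{big thm from woodin first} and \ref{big thm from woodin in SEM} (together with Theorem~\ref{HOD Dichotomy} for the alternative route to~$(2)$).
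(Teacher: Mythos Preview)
Your proof is correct and follows essentially the same strategy as the paper's: harvest the easy implications from Theorem~\ref{big thm from woodin in SEM}, and close the remaining loop by reflecting a single local witness along extendibility embeddings. The paper's routing differs only cosmetically --- it proves $(2)\Rightarrow(1)$ (subsuming your $(5)\Rightarrow(1)$, since it first reduces to ``not $\omega$-strongly measurable'') and $(3)\Rightarrow(4)$ directly via Theorem~\ref{HOD partion pair}, rather than your $(3)\Rightarrow(1)$ via clause~(6) of Theorem~\ref{big thm from woodin in SEM} --- but the reflection step is the same in both cases.

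One technical point: you ask that $\mathsf{V}_{\beta}$ be $\Sigma_2$-correct, which is not something you can arrange freely, since $\beta$ is determined by the extendibility embedding rather than chosen. The paper sidesteps this cleanly: it only requires $\mathsf{V}_{\eta}\models\mathsf{ZFC}$ on the domain side (which transfers to $\mathsf{V}_{j(\eta)}$ by elementarity), and then uses the one-sided inclusion $\mathsf{HOD}^{\mathsf{V}_{j(\eta)}}\subseteq\mathsf{HOD}$ together with absoluteness of stationarity (since $\mathcal{P}(j(\kappa))\subseteq\mathsf{V}_{j(\eta)}$) to push the witness back down to $\mathsf{V}$. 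This is all that is actually needed for the transfer from the target model to $\mathsf{V}$, and your argument goes through once you replace the $\Sigma_2$-correctness of $\mathsf{V}_{\beta}$ by this weaker observation. The paper's formulation via the class $I$ of locally-witnessed cardinals packages this bookkeeping more economically than your version.
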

\begin{proof}
By Theorem \ref{big thm from woodin first} and Theorem \ref{property of weak extender model}, $(1)\Rightarrow (2)$. It is a theorem in $\mathsf{ZFC}$ that $(1)\Rightarrow (3)$.

By Theorem \ref{extendible imply sc} and Theorem \ref{big thm from woodin in SEM}, $(4)\Rightarrow (1)$. $(2)\Rightarrow (1)$: Let $I$ be the set of regular cardinals $\gamma$ such that there exists $\eta>\gamma$ such that $\mathsf{V}_{\eta}\models \mathsf{ZFC}$ and $\mathsf{V}_{\eta}\models \gamma$ is not $\omega$-strongly measurable in $\mathsf{HOD}$. Note that if $\gamma\in I$, then $\gamma$ is not $\omega$-strongly measurable in $\mathsf{HOD}$. Since $\kappa$ is not measurable in $\mathsf{HOD}, \kappa$ is not $\omega$-strongly measurable in $\mathsf{HOD}$ in $\mathsf{V}_{\eta}$ for sufficiently large $\eta$ and hence $\kappa\in I$. Let $\eta$ be the witness of $\kappa\in I$. Since $\delta$ is extendible, for any $\alpha$, there exists $j: \mathsf{V}_{\eta+1}\rightarrow \mathsf{V}_{j(\eta)+1}$ such that $crit(j)=\delta$ and $j(\delta)>\alpha$. Then $j(\eta)$ witnesses that $j(\kappa)\in I$ and $j(\kappa)>\alpha$.

$(3)\Rightarrow (4)$: Suppose  there exists a regular cardinal $\kappa>\delta$ such that $(\delta,\kappa)$ is a $\mathsf{HOD}$-partition pair and $\kappa$ is $\mathsf{HOD}$-supercompact. Let $\theta>\kappa$ be large enough such that $\mathsf{HOD}\bigcap 2^{\kappa}=\mathsf{HOD}^{\mathsf{V}_{\theta}}\bigcap 2^{\kappa}$. Let $j: \mathsf{V}_{\theta+1}\rightarrow \mathsf{V}_{j(\theta)+1}$ be an elementary embedding such that $crit(j)=\delta$ and $j(\delta)>\kappa$. Let $\varphi(\delta)$ denote the statement: for any regular $\lambda<\delta$ there exists regular $\gamma>\lambda$ such that $(\lambda,\gamma)$ is a $\mathsf{HOD}$-partition pair. Note that $\mathsf{V}\models \varphi(\delta)$. Since $\mathsf{HOD}\bigcap 2^{\kappa}=\mathsf{HOD}^{\mathsf{V}_{\theta}}\bigcap 2^{\kappa}, \mathsf{V}_{\theta}\models \varphi(\delta)$. By elementarity of $j$, since $\mathsf{HOD}^{\mathsf{V}_{j(\theta)}}\subseteq \mathsf{HOD}$, for any $\lambda<j(\delta)$ there exists $\gamma>\lambda$ such that $(\lambda,\gamma)$ is a $\mathsf{HOD}$-partition pair. Since $j$ can be chosen with $j(\delta)$ arbitrarily large, it follows that for any $\lambda$ there exists $\gamma>\lambda$ such that $(\lambda,\gamma)$ is a $\mathsf{HOD}$-partition pair. By Theorem \ref{HOD partion pair}, $\mathsf{HOD}$ is a weak extender model for $\kappa$-supercompact.
\end{proof}

In the following, we discuss some basic facts about forcing with respect to the $\mathsf{HOD}$ Hypothesis. It is not hard to force statements listed in Theorem \ref{big thm from woodin in SEM} and Theorem \ref{equi be HC and MC}. Suppose  $\kappa$ is a supercompact cardinal and $\phi$ is any statement listed in Theorem \ref{big thm from woodin in SEM} and Theorem \ref{equi be HC and MC}. Then one can force that $\mathsf{V} \neq \mathsf{HOD}$, $\kappa$ is supercompact and $\phi$ holds as follows: First force to make $\kappa$ indestructible with the appropriate preparatory forcing, then force $\mathsf{V} = \mathsf{HOD}$ and finally add a Cohen real; In the final model,  $\mathsf{V} \neq \mathsf{HOD}, \kappa$ is supercompact and $\phi$ holds.

It is not hard to force the $\mathsf{HOD}$ Hypothesis since $\mathsf{V}=\mathsf{HOD}$ implies the $\mathsf{HOD}$ Hypothesis. It is a folklore that relative to $\mathsf{ZFC}$, we can force $\mathsf{V}=\mathsf{HOD}$ by a proper class forcing notion. For nearly any known large cardinal notion $\phi$, relative to ``$\mathsf{ZFC}\,+\, \phi$" we can force that $\mathsf{V}=\mathsf{HOD}$ and $\phi$ holds. There is a simple way to force that $\mathsf{V} \neq \mathsf{HOD}$ and the $\mathsf{HOD}$ Hypothesis holds: First force $\mathsf{V} =\mathsf{HOD}$ and then add a Cohen real.

\begin{lemma}\label{key lemma on forcing}
Suppose $\delta$ is $\mathsf{HOD}$-supercompact, $\mathbb{P}\in \mathsf{V}_{\delta}$ and $G$ is $\mathbb{P}$-generic over $\mathsf{V}$. If $\mathbb{P}$ is weakly homogeneous and ordinal definable, then $\mathsf{V}\models$ The $\mathsf{HOD}$ Hypothesis if and only if $\mathsf{V}[G]\models$ The $\mathsf{HOD}$ Hypothesis.
\end{lemma}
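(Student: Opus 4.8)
The plan is to split the biconditional into a \emph{structural} step, saying that $\mathsf{V}$, $\mathsf{V}[G]$ and their $\mathsf{HOD}$s agree ``above $\delta$'', and then a purely formal step using the equivalences of Theorem \ref{big thm from woodin in SEM} together with the fact that $\delta$ is $\mathsf{HOD}$-supercompact \emph{in $\mathsf{V}$}. First I would establish the structural step. Since $\delta$ is $\mathsf{HOD}$-supercompact it is, in particular, the critical point of an elementary embedding of $\mathsf{V}$, hence measurable and so inaccessible in $\mathsf{V}$; as $\mathbb{P}\in\mathsf{V}_{\delta}$ this gives $|\mathbb{P}|<\delta$, so $\mathbb{P}$ is $\delta$-c.c.\ and $\mathsf{V}$ and $\mathsf{V}[G]$ have the same cardinals, the same cofinalities, and the same stationary subsets of regular cardinals, among the ordinals $>\delta$. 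For the $\mathsf{HOD}$s I would re-run the proof of Theorem \ref{key fact on forcing} keeping track of sizes: one may take the auxiliary regular cardinal $\kappa$ there to be $<\delta$, so $|S|=(2^{\kappa})^{\mathsf{V}}<\delta$, and then the Vop\v{e}nka algebra $\mathbb{B}\in\mathsf{HOD}^{\mathsf{V}[G]}$ that is used has $\mathsf{HOD}^{\mathsf{V}[G]}$-cardinality at most $(2^{2^{|S|}})^{\mathsf{HOD}^{\mathsf{V}[G]}}\le(2^{2^{|S|}})^{\mathsf{V}}<\delta$. Combining with Proposition \ref{key fact}(1),(4), we get $\mathsf{HOD}^{\mathsf{V}[G]}\subseteq\mathsf{HOD}^{\mathsf{V}}$ and $\mathsf{HOD}^{\mathsf{V}}$ is a generic extension of $\mathsf{HOD}^{\mathsf{V}[G]}$ by a poset of size $<\delta$; hence $\mathsf{HOD}^{\mathsf{V}}$ and $\mathsf{HOD}^{\mathsf{V}[G]}$ also agree, above $\delta$, on cardinals, cofinalities and stationary subsets of regular cardinals.

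For the direction $\mathsf{V}\models\text{HOD Hypothesis}\Rightarrow\mathsf{V}[G]\models\text{HOD Hypothesis}$ I would argue as follows. In $\mathsf{V}$, Theorem \ref{big thm from woodin in SEM} applies since $\delta$ is $\mathsf{HOD}$-supercompact, so the HOD Hypothesis yields (via Theorems \ref{big thm from woodin first} and \ref{property of weak extender model}, i.e.\ item (4) there) that every singular cardinal $\gamma>\delta$ is singular in $\mathsf{HOD}^{\mathsf{V}}$ with $\gamma^{+}=(\gamma^{+})^{\mathsf{HOD}^{\mathsf{V}}}$. By the structural step, ``$\gamma$ is a singular cardinal $>\delta$'', ``$\gamma$ is singular in $\mathsf{HOD}$'' and ``$\gamma^{+}=(\gamma^{+})^{\mathsf{HOD}}$'' are verbatim the same assertions in $\mathsf{V}$ and in $\mathsf{V}[G]$, so item (4) of Theorem \ref{big thm from woodin in SEM} holds in $\mathsf{V}[G]$. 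Since the implication ``$(4)\Rightarrow(1)$'' of Theorem \ref{big thm from woodin in SEM} is a theorem of $\mathsf{ZFC}$ (the successors of singular cardinals $>\delta$ form a proper class of regular cardinals not $\omega$-strongly measurable in $\mathsf{HOD}$), it applies inside $\mathsf{V}[G]$ and gives the HOD Hypothesis there.

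For the converse, $\mathsf{V}[G]\models\text{HOD Hypothesis}\Rightarrow\mathsf{V}\models\text{HOD Hypothesis}$, I would route through $\mathsf{HOD}$-partition pairs. The implication ``$(1)\Rightarrow(6)$'' of Theorem \ref{big thm from woodin in SEM} is a theorem of $\mathsf{ZFC}$, so in $\mathsf{V}[G]$ one gets: for every regular $\gamma>\delta$ there is a regular $\lambda>\gamma$ with $(\gamma,\lambda)$ a $\mathsf{HOD}^{\mathsf{V}[G]}$-partition pair. Fixing such a $\gamma$, the witnessing $\lambda$ is also a regular cardinal $>\delta$ of $\mathsf{V}$, and the witnessing partition $\langle S_{\alpha}\mid\alpha<\gamma\rangle\in\mathsf{HOD}^{\mathsf{V}[G]}\subseteq\mathsf{HOD}^{\mathsf{V}}$ remains a partition in $\mathsf{HOD}^{\mathsf{V}}$ whose pieces are still stationary there (the small Vop\v{e}nka forcing is $\delta$-c.c., hence preserves stationary subsets of $\lambda$); any points of cofinality $\omega$ below $\lambda$ that $\mathsf{HOD}^{\mathsf{V}}$ sees but $\mathsf{HOD}^{\mathsf{V}[G]}$ does not form a set in $\mathsf{HOD}^{\mathsf{V}}$ that can be split via Solovay's theorem in $\mathsf{HOD}^{\mathsf{V}}$ and absorbed into the pieces. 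Thus $(\gamma,\lambda)$ is a $\mathsf{HOD}^{\mathsf{V}}$-partition pair, so item (6) of Theorem \ref{big thm from woodin in SEM} holds in $\mathsf{V}$; since $\delta$ is $\mathsf{HOD}$-supercompact in $\mathsf{V}$, Theorem \ref{HOD partion pair} gives that $\mathsf{HOD}^{\mathsf{V}}$ is a weak extender model for $\delta$ supercompact (item (2)), hence items (4) and then (1), i.e.\ the HOD Hypothesis, hold in $\mathsf{V}$.

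The place where essentially all the real work sits — and the step I expect to need the most care — is the structural step: checking, using inaccessibility of $\delta$, that the Vop\v{e}nka algebra relating $\mathsf{HOD}^{\mathsf{V}}$ and $\mathsf{HOD}^{\mathsf{V}[G]}$ genuinely has size below $\delta$, and pinning down the meaning of ``$cof(\omega)$'' and ``stationary'' in the definition of $\mathsf{HOD}$-partition pair closely enough that partitions transfer between $\mathsf{HOD}^{\mathsf{V}[G]}$ and $\mathsf{HOD}^{\mathsf{V}}$. One could instead try to show first that $\delta$ remains $\mathsf{HOD}$-supercompact in $\mathsf{V}[G]$ — lifting an embedding $j\colon\mathsf{V}\to M$ with $crit(j)=\delta$ through $\mathbb{P}\in\mathsf{V}_{\delta}$ — and then argue symmetrically with the equivalence $(1)\Leftrightarrow(4)$ in both models; but identifying $j$'s action on $\mathsf{HOD}^{\mathsf{V}[G]}\cap(\mathsf{V}_{\delta})^{\mathsf{V}[G]}$ appears to require exactly the same structural analysis, so I would not take that route first.
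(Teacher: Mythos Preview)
Your structural step and forward direction are correct and match the paper's argument exactly: both use that $\mathsf{HOD}^{\mathsf{V}}$ is a $\delta$-c.c.\ generic extension of $\mathsf{HOD}^{\mathsf{V}[G]}$ (your size bound on the Vop\v{e}nka algebra makes this explicit), then invoke $(1)\Rightarrow(4)$ in $\mathsf{V}$ via $\delta$ being $\mathsf{HOD}$-supercompact there, transfer $(4)$ to $\mathsf{V}[G]$, and finish with the $\mathsf{ZFC}$ implication $(4)\Rightarrow(1)$.

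For the converse the paper does \emph{not} go through $\mathsf{HOD}$-partition pairs; it stays with the singular-cardinal characterization $(4)$ throughout and simply says ``by a similar argument''. Your detour through $(6)$ is a genuine departure, and the transfer step has a gap. The definition of a $\mathsf{HOD}$-partition pair uses $\mathrm{cf}(\alpha)=\omega$ and stationarity \emph{in the ambient model}, so the partition you obtain in $\mathsf{V}[G]$ is of $\{\beta<\lambda : \mathrm{cf}^{\mathsf{V}[G]}(\beta)=\omega\}$, whereas in $\mathsf{V}$ you must partition the possibly smaller set $\{\beta<\lambda : \mathrm{cf}^{\mathsf{V}}(\beta)=\omega\}$. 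Your sentence about ``points of cofinality $\omega$ that $\mathsf{HOD}^{\mathsf{V}}$ sees but $\mathsf{HOD}^{\mathsf{V}[G]}$ does not'' has the containment backwards and names the wrong models' cofinality; and simply restricting each $S_\alpha$ to $\{\beta : \mathrm{cf}^{\mathsf{V}}(\beta)=\omega\}$ may kill stationarity, since some $S_\alpha$ could lie entirely inside the set of points whose $\mathsf{V}$-cofinality is an uncountable regular below $|\mathbb{P}|^{+}$ that $\mathbb{P}$ collapses to cofinality $\omega$. A Solovay split inside $\mathsf{HOD}^{\mathsf{V}}$ does not repair this, because you need the pieces to be stationary in $\mathsf{V}$, not merely in $\mathsf{HOD}^{\mathsf{V}}$.

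The paper's ``similar argument'' is the symmetric computation through $(4)$: if $\gamma>\delta$ is singular in $\mathsf{V}$ it is singular in $\mathsf{V}[G]$; granted $(4)$ in $\mathsf{V}[G]$, $\gamma$ is singular in $\mathsf{HOD}^{\mathsf{V}[G]}$ with correct successor, and the $\delta$-c.c.\ relationship pushes this to $\mathsf{HOD}^{\mathsf{V}}$, giving $(4)$ and hence $(1)$ in $\mathsf{V}$. This transfers cleanly with no cofinality-$\omega$ bookkeeping. It does implicitly rely on $(1)\Rightarrow(4)$ \emph{in $\mathsf{V}[G]$}, i.e.\ on $\delta$ being $\mathsf{HOD}$-supercompact there---precisely the alternative you considered and set aside. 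That preservation is the cleaner route: your own structural analysis already yields $\mathsf{HOD}^{\mathsf{V}}\cap\mathsf{V}_\delta=\mathsf{HOD}^{\mathsf{V}[G]}\cap\mathsf{V}_\delta$ (both $\mathbb{P}$ and the Vop\v{e}nka algebra live below the inaccessible $\delta$), so the coherence condition $j(\mathsf{HOD}\cap\mathsf{V}_\delta)\cap\mathsf{V}_\lambda=\mathsf{HOD}\cap\mathsf{V}_\lambda$ transfers to the lifted embedding.
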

\begin{proof}
From Theorem \ref{big thm from woodin first}, the $\mathsf{HOD}$ Hypothesis is equivalent to the statement: every singular cardinal $\gamma>\delta$ is singular in $\mathsf{HOD}$ and $\gamma^{+}=(\gamma^{+})^{\mathsf{HOD}}$. Suppose $\mathsf{V}\models$ The $\mathsf{HOD}$ Hypothesis. We show that $\mathsf{V}[G]\models$ The $\mathsf{HOD}$ Hypothesis. Suppose in $\mathsf{V}[G], \gamma>\delta$ is singular. Then in $\mathsf{V}, \gamma>\delta$ is singular. Since $\mathsf{V}\models$ The $\mathsf{HOD}$ Hypothesis, $\gamma$ is singular in $\mathsf{HOD}^{\mathsf{V}}$. By Proposition \ref{key fact on forcing}, $\mathsf{HOD}^{\mathsf{V}}$ is a $\delta$-c.c. generic extension of $\mathsf{HOD}^{\mathsf{V}[G]}$. Then $\gamma$ is singular in $\mathsf{HOD}^{\mathsf{V}[G]}$. Note that $(\gamma^{+})^{\mathsf{V}[G]}= \gamma^{+}= (\gamma^{+})^{\mathsf{HOD}^{\mathsf{V}}}= (\gamma^{+})^{\mathsf{HOD}^{\mathsf{V}[G]}}$. So in $\mathsf{V}[G]$, if $\gamma>\delta$ is singular, then $\gamma$ is singular in $\mathsf{HOD}$ and $\gamma^{+}=(\gamma^{+})^{\mathsf{HOD}}$. By a similar argument, we can show that if $\mathsf{V}[G]\models$ The $\mathsf{HOD}$ Hypothesis, then $\mathsf{V}\models$ The $\mathsf{HOD}$ Hypothesis.
\end{proof}

\begin{proposition}
(\cite[Corollary 8]{Woodin 1})\quad Suppose $\delta$ is $\mathsf{HOD}$-supercompact, $G$ is $\mathbb{P}$-generic over $\mathsf{V}$ and $\mathbb{P}\in \mathsf{V}_{\delta}$. Then $\mathsf{V}\models$ The $\mathsf{HOD}$ Hypothesis if and only if $\mathsf{V}[G]\models$ The $\mathsf{HOD}$ Hypothesis.
\end{proposition}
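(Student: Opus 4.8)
The plan is to reduce the general statement to Lemma \ref{key lemma on forcing} by absorbing $\mathbb{P}$ into a L\'evy collapse. Since $\delta$ is $\mathsf{HOD}$-supercompact it is in particular supercompact, hence inaccessible, so $\lambda_0:=\abs{\mathbb{P}}<\delta$; put $\mathbb{Q}=\mathrm{Coll}(\omega,\lambda_0)$, a weakly homogeneous, ordinal definable poset lying in $\mathsf{V}_{\delta}$ (and literally the same poset whether computed in $\mathsf{V}$ or in $\mathsf{V}[G]$). By the standard absorption property of the L\'evy collapse, $\mathbb{P}\times\mathbb{Q}$ has a dense subset isomorphic to $\mathbb{Q}$; hence if $H$ is $\mathbb{Q}$-generic over $\mathsf{V}[G]$ then $\mathsf{V}[G][H]=\mathsf{V}[K]$ for some $K$ that is $\mathbb{Q}$-generic over $\mathsf{V}$.

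Now I would argue in two steps. First, applying Lemma \ref{key lemma on forcing} in $\mathsf{V}$ to the weakly homogeneous, ordinal definable poset $\mathbb{Q}\in\mathsf{V}_{\delta}$ shows that the $\mathsf{HOD}$ Hypothesis holds in $\mathsf{V}$ iff it holds in $\mathsf{V}[K]$. Second, it remains to see that the $\mathsf{HOD}$ Hypothesis holds in $\mathsf{V}[G]$ iff it holds in $\mathsf{V}[G][H]=\mathsf{V}[K]$; for this I would run the proof of Lemma \ref{key lemma on forcing} once more, this time with $\mathsf{V}[G]$ as the ground model and $\mathbb{Q}$ (computed in $\mathsf{V}[G]$, still weakly homogeneous, ordinal definable, and in $\mathsf{V}[G]_{\delta}$) as the forcing. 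Chaining the two equivalences then gives the Proposition.

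The point that requires care is that the hypothesis of Lemma \ref{key lemma on forcing} be available over $\mathsf{V}[G]$, i.e.\ that $\delta$ remains $\mathsf{HOD}$-supercompact in $\mathsf{V}[G]$. I would establish this by a L\'evy--Solovay style lifting, using characterization (3) of Theorem \ref{definition equivalence}: since $\mathbb{P}$ is small, for a proper class of $\lambda$ (the $\Sigma_2$-correct ones) $\mathsf{V}[G]$ still computes $\mathsf{HOD}\cap\mathsf{V}_{\lambda}$ correctly, the normal fine $\delta$-complete ultrafilters on $P_{\delta}(\mathsf{V}_{\lambda})$ witnessing $\mathsf{HOD}$-supercompactness of $\delta$ in $\mathsf{V}$ extend to $\mathsf{V}[G]$ in the usual way, and the set $Z$ of Theorem \ref{definition equivalence}(3), read in $\mathsf{V}[G]$, remains of measure one because membership in it is controlled by the transitive collapse being a $\Sigma_2$-correct initial segment of $\mathsf{V}[G]$.

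The main obstacle is precisely this last point: because $\mathbb{P}$ need not be weakly homogeneous or ordinal definable, $\mathsf{HOD}^{\mathsf{V}[G]}$ is in general not related to $\mathsf{HOD}^{\mathsf{V}}$ as an extension is to its ground, so neither $\mathsf{HOD}$-supercompactness nor the singular-cardinal reformulation of the $\mathsf{HOD}$ Hypothesis transfers directly from $\mathsf{V}$ to $\mathsf{V}[G]$. The approach above sidesteps this by only ever comparing each of $\mathsf{V}$ and $\mathsf{V}[G]$ with the common \emph{homogeneous} extension $\mathsf{V}[G][H]=\mathsf{V}[K]$, where Theorem \ref{key fact on forcing} supplies the needed $\delta$-c.c.\ ground--extension relationship between the relevant $\mathsf{HOD}$'s. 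If one prefers to bypass preservation of $\mathsf{HOD}$-supercompactness altogether, one can instead argue directly that $\omega$-strong measurability in $\mathsf{HOD}$ of a regular cardinal $\gamma>\lambda_0$ is decided in the same way in $\mathsf{V}$, in $\mathsf{V}[G]$ and in $\mathsf{V}[G][H]$, using that $\mathsf{HOD}^{\mathsf{V}}$ and $\mathsf{HOD}^{\mathsf{V}[G]}$ are each $\delta$-c.c.\ generic extensions of $\mathsf{HOD}^{\mathsf{V}[G][H]}$ together with the way small and $\delta$-c.c.\ forcing preserve stationary subsets of regular cardinals above $\lambda_0$.
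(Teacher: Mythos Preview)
Your proposal is correct and follows essentially the same route as the paper: absorb $\mathbb{P}$ into a L\'evy collapse below $\delta$ and apply Lemma~\ref{key lemma on forcing} twice, once from $\mathsf{V}$ and once from $\mathsf{V}[G]$, to the common extension. The paper's version differs only cosmetically, collapsing an inaccessible $\kappa<\delta$ with $\mathbb{P}\in\mathsf{V}_{\kappa}$ rather than $\abs{\mathbb{P}}$, and it simply invokes Lemma~\ref{key lemma on forcing} over $\mathsf{V}[G]$ without further comment.

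You are in fact more careful than the paper on one point. Applying Lemma~\ref{key lemma on forcing} with $\mathsf{V}[G]$ as ground model requires that $\delta$ be $\mathsf{HOD}$-supercompact in $\mathsf{V}[G]$, since the proof of that lemma passes through the equivalence of the $\mathsf{HOD}$ Hypothesis with the singular-cardinal condition (Theorem~\ref{big thm from woodin in SEM}(4)), and the nontrivial direction of that equivalence needs $\mathsf{HOD}$-supercompactness in the model where it is invoked. The paper silently takes this preservation for granted; your L\'evy--Solovay lifting via characterization~(3) of Theorem~\ref{definition equivalence} is a reasonable way to supply it, and your alternative of comparing $\omega$-strong measurability directly (using that $\mathsf{HOD}^{\mathsf{V}}$ and $\mathsf{HOD}^{\mathsf{V}[G]}$ are both $\delta$-c.c.\ extensions of $\mathsf{HOD}^{\mathsf{V}[G][H]}$) is also viable and has the advantage of bypassing the preservation question entirely.
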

\begin{proof}
Take $\kappa<\delta$ be an inaccessible cardinal such that $\mathbb{P}\in \mathsf{V}_{\kappa}$.
Let $I$ be a $\mathsf{V}[G]$-generic filter on $Coll(\omega, \kappa)$ and $J$ be a $\mathsf{V}$-generic filter on $Coll(\omega, \kappa)$ such that $\mathsf{V}[G][I]=\mathsf{V}[J]$. Since $Coll(\omega, \kappa)$ is ordinal definable and weakly homogeneous, by Lemma \ref{key lemma on forcing}, the $\mathsf{HOD}$ Hypothesis is absolute between $\mathsf{V}[G]$ and $\mathsf{V}[G][I]$, as well as between $\mathsf{V}$ and $\mathsf{V}[J]$. So the $\mathsf{HOD}$ Hypothesis is absolute between $\mathsf{V}$ and $\mathsf{V}[G]$.
\end{proof}

\begin{corollary}\label{class absolute for strong hod}\footnote{This corollary strengthens theorem 214 in \cite{Woodin 2}.}
\begin{enumerate}[(1)]
  \item  Suppose $\delta$ is $\mathsf{HOD}$-supercompact. Then $\mathsf{V}\models$ The $\mathsf{HOD}$ Hypothesis iff for any partial order $\mathbb{P}\in \mathsf{V}_{\delta}, V^{\mathbb{P}}\models$ The $\mathsf{HOD}$ Hypothesis.
  \item  If there exists a proper class of $\mathsf{HOD}$-supercompact  cardinals, then $\mathsf{V}\models$ The $\mathsf{HOD}$ Hypothesis iff for any partial order $\mathbb{P}, \mathsf{V}^{\mathbb{P}}\models$ The $\mathsf{HOD}$ Hypothesis.
\end{enumerate}
\end{corollary}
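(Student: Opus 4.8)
The plan is to read off both parts as essentially immediate consequences of the Proposition immediately preceding this corollary (the reformulation of \cite[Corollary 8]{Woodin 1}), which already establishes the full two-way absoluteness of the $\mathsf{HOD}$ Hypothesis under a single set forcing $\mathbb{P}\in \mathsf{V}_{\delta}$ with $\delta$ being $\mathsf{HOD}$-supercompact. All that remains is quantifier bookkeeping together with the observation that the right-to-left direction of each biconditional is trivial.

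For (1): the direction from right to left is witnessed by the trivial forcing $\mathbb{P}=\{0\}\in \mathsf{V}_{\delta}$, for which $\mathsf{V}^{\mathbb{P}}=\mathsf{V}$. For the direction from left to right, fix an arbitrary $\mathbb{P}\in \mathsf{V}_{\delta}$ and let $G$ be $\mathbb{P}$-generic over $\mathsf{V}$; since $\delta$ is $\mathsf{HOD}$-supercompact and $\mathbb{P}\in \mathsf{V}_{\delta}$, the preceding Proposition gives that $\mathsf{V}\models$ The $\mathsf{HOD}$ Hypothesis iff $\mathsf{V}[G]\models$ The $\mathsf{HOD}$ Hypothesis, and in particular the forward implication. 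As $\mathbb{P}$ and $G$ were arbitrary, (1) follows.

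For (2): again the right-to-left direction is the trivial-forcing instance of (1) (or directly). For left to right, let $\mathbb{P}$ be an arbitrary partial order and $G$ be $\mathbb{P}$-generic over $\mathsf{V}$. Since the $\mathsf{HOD}$-supercompact cardinals form a proper class, they are unbounded in the ordinals, so we may choose an $\mathsf{HOD}$-supercompact $\delta$ with $\mathbb{P}\in \mathsf{V}_{\delta}$; then part (1), applied to this $\delta$ and $\mathbb{P}$, yields $\mathsf{V}\models$ The $\mathsf{HOD}$ Hypothesis iff $\mathsf{V}[G]\models$ The $\mathsf{HOD}$ Hypothesis. Since $\mathbb{P}$ was arbitrary, this gives (2).

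There is no genuine obstacle at this stage: the substantive work sits in Lemma \ref{key lemma on forcing} and the preceding Proposition (which in turn rest on Theorem \ref{big thm from woodin first}, Theorem \ref{key fact on forcing} and Proposition \ref{key fact}). If one insists on isolating a point needing care, it is only the verification that a proper class of $\mathsf{HOD}$-supercompact cardinals really does supply, for each set-sized poset $\mathbb{P}$, an $\mathsf{HOD}$-supercompact $\delta$ with $\mathbb{P}\in \mathsf{V}_{\delta}$ — immediate since a proper class of ordinals is unbounded — and the routine remark that the right-to-left directions are witnessed by trivial forcing.
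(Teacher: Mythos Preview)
Your proposal is correct and matches the paper's approach: the corollary is stated without proof precisely because it is immediate from the preceding Proposition, and your write-up spells out exactly that derivation (trivial forcing for the backward directions, the Proposition for the forward directions, and unboundedness of a proper class for part (2)).
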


\begin{definition}\label{}
The Strong $\mathsf{HOD}$ Hypothesis denotes the statement: there is a proper class of regular cardinals which are not measurable in $\mathsf{HOD}$.
\end{definition}

By Theorem \ref{strong mc imply mc}, the Strong $\mathsf{HOD}$ Hypothesis implies the $\mathsf{HOD}$ Hypothesis. By Theorem \ref{big thm from woodin in SEM}, if there exists an $\mathsf{HOD}$-supercompact cardinal, then the $\mathsf{HOD}$ Hypothesis is equivalent to the Strong $\mathsf{HOD}$ Hypothesis. From Corollary \ref{class absolute for strong hod}, if $\delta$ is $\mathsf{HOD}$-supercompact, then $\mathsf{V}\models$ The Strong $\mathsf{HOD}$ Hypothesis iff for any partial order $\mathbb{P}\in \mathsf{V}_{\delta}, \mathsf{V}^{\mathbb{P}}\models$ The Strong $\mathsf{HOD}$ Hypothesis. The difficulty in forcing the failure of the Strong $\mathsf{HOD}$ Hypothesis comes from the difficulty in making the successors of singular cardinals measurable in  $\mathsf{HOD}$.

\section{The $\mathsf{HOD}$ Hypothesis and a supercompact cardinal}

In \cite{joint work with Joel}, very large cardinals such as supercompact cardinals in $\mathsf{V}$ are forced not to exhibit their large cardinal properties in $\mathsf{HOD}$: they can be very small (not even weakly compact) in $\mathsf{HOD}$. A reasonable natural question would be how far this can be taken, that is whether there exists a supercompact cardinal in $\mathsf{V}$ which is not only not even weakly compact in $\mathsf{HOD}$ but also has no other cardinals in $\mathsf{HOD}$ which exhibit large cardinal behavior. In the following, we prove in Theorem \ref{main result of the paper} that under the $\mathsf{HOD}$ Hypothesis the answer is no: if $\kappa$ is supercompact and  the $\mathsf{HOD}$ Hypothesis holds, then there is a proper class of regular cardinals below $\kappa$, which are measurable in $\mathsf{HOD}$.

The main idea of Theorem \ref{main result of the paper} is as follows. Suppose $\kappa$ is supercompact and the $\mathsf{HOD}$ Hypothesis holds. Take $\alpha<\kappa$ and $\lambda>\kappa$ such that $\lambda$ is a limit of regular cardinals which are not $\omega$-strongly measurable in $\mathsf{HOD}$ and $\mathsf{HOD}\cap \mathsf{V}_{\lambda}=\mathsf{HOD}^{\mathsf{V}_{\lambda}}$. To find a measurable cardinal between $\alpha$ and $\kappa$ in $\mathsf{HOD}$, we need find elementary embeddings $\pi_1: \mathsf{V}_{\lambda_1+1} \to \mathsf{V}_{\lambda_2+1}$ and  $\pi_2: \mathsf{V}_{\lambda_2+1} \to \mathsf{V}_{\lambda+1}$ such that $crit(\pi_1)=\kappa_1, \alpha<\kappa_1<\kappa$ and $\pi_3(\mathsf{HOD}\cap \mathsf{V}_{\lambda_1}) \subseteq\mathsf{HOD}$ where $\pi_3=\pi_2\circ\pi_1$, which we will get in Theorem \ref{equivalence of supercompact}. We want to show that $\kappa_1$ is measurable in $\mathsf{HOD}$. To do this, it suffices to show that any $\overline{\gamma}<\lambda_1, \pi_3\upharpoonright (\mathsf{HOD}\cap \mathsf{V}_{\overline{\gamma}})\in \mathsf{HOD}$. Suppose $\pi_3(\overline{\gamma})=\gamma$. Take  $\delta>|\mathsf{V}_{\gamma+\omega+1}|$ such that $\delta<\lambda$ and $\delta$ is not $\omega$-strongly measurable in $\mathsf{HOD}$. By the $\mathsf{HOD}$ Hypothesis, there exists $\langle S_{\alpha}\mid\alpha< |\mathsf{V}_{\gamma+\omega}|\rangle\in \mathsf{HOD}$ which is  a  partition of $S^{\delta}_{\omega}$ into  stationary sets in $\delta$. Then applying Lemma \ref{isolated lemma} to $\pi_3$ and $\langle S_{\alpha}\mid\alpha< |\mathsf{V}_{\gamma+\omega}|\rangle$, we have $\tau_{\eta_0}=\pi_3\upharpoonright |\mathsf{V}_{\overline{\gamma}+\omega}|$. From $\langle S_{\alpha}\mid\alpha< |\mathsf{V}_{\gamma+\omega}|\rangle\in \mathsf{HOD}$, we can show that  $\pi_3\upharpoonright |\mathsf{V}_{\overline{\gamma}+\omega}|\in \mathsf{HOD}$. Finally, from $\pi_3(\mathsf{HOD}\cap \mathsf{V}_{\lambda_1}) \subseteq\mathsf{HOD}$ and $\pi_3\upharpoonright |\mathsf{V}_{\overline{\gamma}+\omega}|\in \mathsf{HOD}$, by a standard argument, we can show that $\pi_3\upharpoonright (\mathsf{HOD}\cap \mathsf{V}_{\overline{\gamma}})\in \mathsf{HOD}$.

The following Theorem \ref{equivalence of supercompact} gives a new formulation of supercompactness which is important in the proof of our main result Theorem \ref{main result of the paper}. Compared to Magidor's characterization of supercompactness in Theorem \ref{Magidor supercompact}, the new component of this formulation is the coherence condition for $\pi_1$ in Theorem \ref{equivalence of supercompact}(3). From \cite{Woodin 5}, Woodin also proved Theorem \ref{equivalence of supercompact}.

The idea behind Theorem \ref{equivalence of supercompact} is as follows. Let $j_0: \mathsf{V}\rightarrow M_0$ be the witness embedding for $\kappa$-supercompactness such that $M_0$ is closed under $\mathsf{V}_{\lambda+1}$-sequences.  Then $j_0(j_0): M_0\rightarrow M_1$. Take $j=j_0(j_0)\circ j_0$.   Then $j: \mathsf{V}\rightarrow M_1$. Then we can get the coherence of the intermediate embeddings in $\mathsf{V}$ via showing in $M_1$ the coherence of the intermediate embeddings $\pi_1=j_0\upharpoonright \mathsf{V}_{\lambda+1}$ and $\pi_2=j_0(\pi_1)=j_0(j_0)\upharpoonright j_0(\mathsf{V}_{\lambda+1})$.

\begin{theorem}\label{equivalence of supercompact}
$\kappa$ is supercompact  if and only if for all $\lambda > \kappa$, any $\alpha<\kappa$ and for all $N \subseteq \mathsf{V}_{\kappa}$, there exist  $\kappa_1 < \lambda_1 < \kappa_2 < \lambda_2 < \kappa$, and elementary embeddings $\pi_1: \mathsf{V}_{\lambda_1+1} \to \mathsf{V}_{\lambda_2+1}$ and  $\pi_2: \mathsf{V}_{\lambda_2+1} \to \mathsf{V}_{\lambda+1}$ such that
\begin{enumerate}[(1)]
  \item $\alpha<\kappa_1, crit(\pi_1)=\kappa_1$ and $crit(\pi_2)=\kappa_2$;
  \item $\pi_2(\kappa_2) = \kappa$ and $\pi_1(\kappa_1) = \kappa_2$; and
  \item $\pi_1(N\cap \mathsf{V}_{\lambda_1}) = N \cap \mathsf{V}_{\lambda_2}$ and $\pi_2(N\cap \mathsf{V}_{\lambda_2}) = N \cap \mathsf{V}_{\lambda}$.
\end{enumerate}
\end{theorem}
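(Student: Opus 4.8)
The plan is as follows. The ``only if'' direction is immediate from Magidor's characterization of supercompactness (Theorem~\ref{Magidor supercompact}). Given the right-hand side, fix $\lambda>\kappa$ and apply it with, say, $\alpha=0$ and $N=\emptyset$ to get $\kappa_1<\lambda_1<\kappa_2<\lambda_2<\kappa$ and embeddings $\pi_1,\pi_2$ as stated, and set $\pi:=\pi_2\circ\pi_1\colon\mathsf{V}_{\lambda_1+1}\to\mathsf{V}_{\lambda+1}$. Since $\mathrm{crit}(\pi_2)=\kappa_2>\lambda_1$, $\pi_2$ fixes $\mathsf{V}_{\lambda_1}$ pointwise, so $\mathrm{crit}(\pi)=\mathrm{crit}(\pi_1)=\kappa_1$ and $\pi(\kappa_1)=\pi_2(\kappa_2)=\kappa$. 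Hence $\pi\upharpoonright\mathsf{V}_{\lambda_1}\colon\mathsf{V}_{\lambda_1}\to\mathsf{V}_{\lambda}$ is an elementary embedding with critical point $\kappa_1$, $\pi(\kappa_1)=\kappa$ and $\lambda_1<\kappa$; since $\lambda>\kappa$ was arbitrary, Magidor's criterion yields that $\kappa$ is supercompact.

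For the ``if'' direction, fix $\lambda>\kappa$, $\alpha<\kappa$ and $N\subseteq\mathsf{V}_\kappa$, and write $\Psi(\delta,\eta,\beta,A)$ for the instance of the right-hand side with supercompact $\delta$, ambient parameter $\eta$, bound $\beta$ and coherence set $A$; we must prove $\Psi(\kappa,\lambda,\alpha,N)$. The idea is to verify a $j$-image of this statement in a carefully chosen iterated ultrapower and reflect. Fix a $\beth$-fixed point $\theta$ with $\theta>2^{|\mathsf{V}_{\lambda+2}|}$ and let $j_0\colon\mathsf{V}\to M_0$ witness that $\kappa$ is $\theta$-supercompact: $\mathrm{crit}(j_0)=\kappa$, $j_0(\kappa)>\theta$ and ${}^{\theta}M_0\subseteq M_0$, so $M_0$ and $\mathsf{V}$ agree below rank $\theta$. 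Let $j_1\colon M_0\to M_1$ be the canonical ``second step'' $j_0(j_0)$, i.e.\ the class embedding with $j_1\upharpoonright\mathsf{V}_{j_0(\gamma)}^{M_0}=j_0(j_0\upharpoonright\mathsf{V}_\gamma)$ for all $\gamma$ (these pieces cohere by elementarity of $j_0$), and put $j:=j_1\circ j_0\colon\mathsf{V}\to M_1$. Then $\mathrm{crit}(j_1)=j_0(\kappa)$, $\mathrm{crit}(j)=\kappa$, and $j(x)=j_0(j_0(x))$ for all $x$.

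Set $\pi_1:=j_0\upharpoonright\mathsf{V}_{\lambda+1}$ and $\pi_2:=j_0(\pi_1)\;(=j_1\upharpoonright\mathsf{V}_{j_0(\lambda)+1}^{M_0})$. I would first establish the bookkeeping: both $\pi_1$ and $\pi_2$ belong to $M_1$ (using that $\theta$ is large relative to $\lambda$ together with the closure of $j_0$, and of $j_1$ inside $M_0$, so that the relevant restrictions are captured by short sequences over $M_1$, and so that $M_1$ computes $\mathsf{V}_{\lambda+1},\mathsf{V}_{j_0(\lambda)+1}$ correctly); inside $M_1$, $\pi_1$ is an elementary embedding $\mathsf{V}_{\lambda+1}\to\mathsf{V}_{j_0(\lambda)+1}$ with $\mathrm{crit}(\pi_1)=\kappa$ and $\pi_1(\kappa)=j_0(\kappa)$, and $\pi_2$ is an elementary embedding $\mathsf{V}_{j_0(\lambda)+1}\to\mathsf{V}_{j(\lambda)+1}$ with $\mathrm{crit}(\pi_2)=j_0(\kappa)$ and $\pi_2(j_0(\kappa))=j(\kappa)$; and $\pi_2\circ\pi_1=j\upharpoonright\mathsf{V}_{\lambda+1}$. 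Thus the two new intermediate pairs are $(\kappa,\lambda)$ and $(j_0(\kappa),j_0(\lambda))$, with all four ordinals below $j(\kappa)$. Together with $j(\alpha)=\alpha$, these data would witness $M_1\models\Psi(j(\kappa),j(\lambda),\alpha,j(N))$, and since $j(\kappa),j(\lambda),\alpha,j(N)$ all lie in the range of $j$, elementarity of $j$ gives $\mathsf{V}\models\Psi(\kappa,\lambda,\alpha,N)$.

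The crux — and where I expect the main difficulty to lie — is the coherence clause~(3): one must verify, inside $M_1$, that $\pi_1$ and $\pi_2$ restrict coherently to $j(N)$, i.e.\ $\pi_1\bigl(j(N)\cap\mathsf{V}_\lambda\bigr)=j(N)\cap\mathsf{V}_{j_0(\lambda)}$ and $\pi_2\bigl(j(N)\cap\mathsf{V}_{j_0(\lambda)}\bigr)=j(N)\cap\mathsf{V}_{j(\lambda)}$. It is precisely here that a single supercompactness embedding does not suffice: the naive candidate $j\upharpoonright\mathsf{V}_{\lambda+1}$ for a one-step reflection fails, because $j(N)$ in general spreads across all ranks below $j(\kappa)$ rather than staying concentrated below $\kappa$, and applying $j$ once more displaces that truncation. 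Routing through $M_0$ first repairs this: the coherence set relevant to $\pi_2$ is $j_0(N)$, which sits essentially inside the range of $j_0$, so that $\pi_2=j_0(\pi_1)$ — being literally $j_0$ applied to $\pi_1$ — acts on it exactly as the elementarity of $j_0$ prescribes, namely coherently. Turning this into a proof amounts to a careful tracking of $j(N)=j_0(j_0(N))$ against the ranges of $j_0$ and $j_1$ and the levels $\mathsf{V}_\lambda$, $\mathsf{V}_{j_0(\lambda)}$, $\mathsf{V}_{j(\lambda)}$ as computed in $M_1$, which is the main technical content of the theorem.
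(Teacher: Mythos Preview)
Your approach is correct and essentially identical to the paper's: iterate the supercompactness embedding once, set $\pi_1=j_0\upharpoonright \mathsf{V}_{\lambda+1}$ and $\pi_2=j_0(\pi_1)$, verify the witnessing data for $j(\kappa),j(\lambda),\alpha,j(N)$ inside $M_1$, and reflect via $j=j_0(j_0)\circ j_0$; the paper likewise identifies the coherence clause~(3) as the nontrivial step and verifies it by the computation $j(N)=j_0(j_0(N))$ together with elementarity of $j_0$. One cosmetic remark: you have swapped the ``if'' and ``only if'' labels (in ``$A$ iff $B$'', the ``only if'' direction is $A\Rightarrow B$), and the paper in fact writes out only the forward direction ($\kappa$ supercompact $\Rightarrow$ RHS), leaving the converse implicit from Magidor --- which you supply correctly.
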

\begin{proof}
Fix $\lambda > \kappa, \alpha<\kappa$ and $N \subseteq \mathsf{V}_{\kappa}$. Take $j_0: \mathsf{V}\rightarrow M_0$ such that $crit(j_0)=\kappa$ and $M_0$ is closed under $\mathsf{V}_{\lambda+1}$-sequences. Then $j_0(j_0): M_0\rightarrow M_1$ and $M_1$ is closed under $j_0(\mathsf{V}_{\lambda+1})$-sequences in $M_0$.  Let $j=j_0(j_0)\circ j_0$. Then $j: V\rightarrow M_1$.
It suffices to show in $M_1$ that there exist $\kappa_1 < \lambda_1 < \kappa_2 < \lambda_2 < j(\kappa)$, $\pi_1: \mathsf{V}_{\lambda_1+1} \to \mathsf{V}_{\lambda_2+1}$ and  $\pi_2: \mathsf{V}_{\lambda_2+1} \to \mathsf{V}_{j(\lambda)+1}$ such that
\begin{enumerate}[(1)]
  \item $j(\alpha)=\alpha<\kappa_1, crit(\pi_1)=\kappa_1$ and $crit(\pi_2)=\kappa_2$;
  \item $\pi_2(\kappa_2) = j(\kappa)$ and $\pi_1(\kappa_1) = \kappa_2$; and
  \item $\pi_1(j(N)\cap \mathsf{V}_{\lambda_1}) = j(N) \cap \mathsf{V}_{\lambda_2}$ and $\pi_2(j(N)\cap \mathsf{V}_{\lambda_2}) = j(N) \cap \mathsf{V}_{j(\lambda)}$.
\end{enumerate}
Let $\kappa_1=\kappa,  \lambda_1=\lambda, \kappa_2=j_0(\kappa)$, $\lambda_2=j_0(\lambda)$, $\pi_1=j_0\upharpoonright \mathsf{V}_{\lambda+1}$ and $\pi_2=j_0(\pi_1)=j_0(j_0)\upharpoonright j_0(\mathsf{V}_{\lambda+1})$.
Then $\pi_1: \mathsf{V}_{\lambda+1}\rightarrow \mathsf{V}_{j_0(\lambda)+1}$ and  $\pi_2: \mathsf{V}_{j_0(\lambda)+1}\rightarrow \mathsf{V}_{j(\lambda)+1}$. Since $M_1$ is closed under $\mathsf{V}_{\lambda+1}$-sequences in $\mathsf{V}$, $\pi_1\in M_1$. Since $M_1$ is closed under $j_0(\mathsf{V}_{\lambda+1})$-sequences in $M_0$, $\pi_2\in M_1$. It is easy to check that (1) and (2) hold. We only check (3) as follows. Since $N\subseteq \mathsf{V}_{\kappa}$ and $crit(j_0)=\kappa, j_0(N)=N$. Note that $\pi_1(j(N)\cap \mathsf{V}_{\lambda})=j_0(j_0(j_0)(j_0(N)))\cap \mathsf{V}_{j_0(\lambda)}=j_0(j_0(j_0(N)))\cap \mathsf{V}_{j_0(\lambda)}=j_0(j_0(N))\cap \mathsf{V}_{j_0(\lambda)}=j(N)\cap \mathsf{V}_{j_0(\lambda)}$. By the similar argument, we have $\pi_2(j(N)\cap \mathsf{V}_{j_0(\lambda)}) = j(N) \cap \mathsf{V}_{j(\lambda)}$.
\end{proof}

The following lemma is isolated from Woodin's proof of Theorem \ref{big thm from woodin first} in \cite[Theorem 193]{Woodin 2}. Since Lemma \ref{isolated lemma} will be used in the proof of Theorem \ref{main result of the paper}, we prove it with details here.  The technique in the proof of Lemma \ref{isolated lemma} also appears in Woodin's lemma in \cite[Theorem 11]{Kunen inconsistency}.
\begin{lemma}\label{isolated lemma}
Suppose \ $\kappa$ is an uncountable regular cardinal, \ $\gamma<\kappa$, and $\langle S_{\alpha}: \alpha<\gamma\rangle$ is a partition of \ $cof(\omega)\cap\kappa$ into stationary sets. Let \ $j$ be an elementary embedding with critical point \ $\delta$ such that \ $j(\delta)<\gamma$. Let \ $j(\overline{\gamma},\overline{\kappa})=(\gamma,\kappa)$ and \ $j(\langle \overline{S}_{\alpha}: \alpha<\overline{\gamma}\rangle)=\langle S_{\alpha}: \alpha<\gamma\rangle$. For \ $\eta<\overline{\kappa}$ such that \ $cof(\eta)>\omega$, let \ $\sigma_{\eta}=\{\alpha<\overline{\gamma}: \overline{S}_{\alpha}\cap\eta$ is stationary in \ $\eta\}$. Let \ $\langle \tau_{\eta}: \eta<\kappa, cof(\eta)>\omega\rangle=j(\langle \sigma_{\eta}: \eta<\overline{\kappa}, cof(\eta)>\omega\rangle)$. Let \ $\eta_0=\sup\{j(\varepsilon): \varepsilon<\overline{\kappa}\}$. Then \ $\tau_{\eta_0}=\{j(\alpha): \alpha<\overline{\gamma}\}$ (i.e. for $\theta<\gamma, \theta\in ran(j)$ if and only if $S_{\theta}\cap\eta_0$ is stationary in $\eta_0$).
\end{lemma}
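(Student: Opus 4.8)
The plan is to track a single club subset $C\subseteq\eta_0$ of order type $\overline\kappa$ and use it to convert the statement ``$S_\theta\cap\eta_0$ is stationary in $\eta_0$'' into a statement about which stationary sets of the partition $\langle\overline S_\alpha\rangle$ reflect. First I would set $C=\{j(\varepsilon):\varepsilon<\overline\kappa\}$. Since $j\restriction\overline\kappa$ is order-preserving and continuous at points of cofinality $>\omega$ that are already in the range (and since $\eta_0=\sup C$ has uncountable cofinality, as $\overline\kappa$ is regular uncountable and $j$ is continuous enough there), $C$ is a club subset of $\eta_0$, and the collapsing map $\varepsilon\mapsto j(\varepsilon)$ identifies $\eta_0$-stationarity with $\overline\kappa$-stationarity: for a set $A\subseteq\eta_0$, $A\cap C$ is stationary in $\eta_0$ iff $\{\varepsilon<\overline\kappa: j(\varepsilon)\in A\}$ is stationary in $\overline\kappa$.

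Next I would compute $\tau_{\eta_0}$ directly from the definition. By elementarity applied to the definition of $\langle\sigma_\eta\rangle$, we have $\tau_{\eta_0}=\{\alpha<\gamma: S_\alpha\cap\eta_0\text{ is stationary in }\eta_0\}$. So fix $\theta<\gamma$ and ask when $S_\theta\cap\eta_0$ is stationary in $\eta_0$. Because $C$ is club in $\eta_0$, $S_\theta\cap\eta_0$ is stationary in $\eta_0$ iff $S_\theta\cap C$ is stationary in $\eta_0$, iff $\{\varepsilon<\overline\kappa: j(\varepsilon)\in S_\theta\}$ is stationary in $\overline\kappa$. Now $j(\varepsilon)\in S_\theta$ iff $j(\varepsilon)\in (j(\langle\overline S_\alpha\rangle))_\theta$; since each $j(\varepsilon)$ has cofinality $\omega$ (critical point $\delta$ is above $\omega$, so $j$ preserves cofinality $\omega$) and $\varepsilon$ ranges over ordinals of cofinality $\omega$ in $\overline\kappa$, elementarity gives $j(\varepsilon)\in (j(\langle\overline S_\alpha\rangle))_\theta$ iff there is $\alpha<\overline\gamma$ with $\varepsilon\in\overline S_\alpha$ and $j(\alpha)=\theta$. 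If $\theta\notin\operatorname{ran}(j)$ this never happens, so $\{\varepsilon: j(\varepsilon)\in S_\theta\}$ is empty, hence $S_\theta\cap\eta_0$ is nonstationary and $\theta\notin\tau_{\eta_0}$. If $\theta=j(\alpha)$ for a (unique) $\alpha<\overline\gamma$, then $\{\varepsilon<\overline\kappa: j(\varepsilon)\in S_\theta\}=\overline S_\alpha$ up to a set disjoint from $\operatorname{cof}(\omega)$, which is stationary in $\overline\kappa$ by hypothesis; hence $S_\theta\cap\eta_0$ is stationary in $\eta_0$ and $\theta\in\tau_{\eta_0}$. This yields $\tau_{\eta_0}=\{j(\alpha):\alpha<\overline\gamma\}$.

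I expect the main obstacle to be the careful verification that $C$ really is club in $\eta_0$ and that $\eta_0$-stationarity transfers correctly — in particular that $j$ is continuous at the relevant limit ordinals and that $\eta_0$ has uncountable cofinality, so that restricting to a club does not destroy stationarity, together with the bookkeeping that the ordinals $j(\varepsilon)$ for $\varepsilon$ of cofinality $\omega$ are cofinal in $\eta_0$ and exhaust the cofinality-$\omega$ points of $C$. The rest is a routine application of elementarity to the definitions of $\sigma_\eta$ and $\tau_\eta$, combined with the observation that $\langle\overline S_\alpha:\alpha<\overline\gamma\rangle$ being (by elementarity) a partition of $\operatorname{cof}(\omega)\cap\overline\kappa$ into sets that are \emph{stationary in $\overline\kappa$} is exactly what is needed to make each $\overline S_\alpha$ witness $\theta=j(\alpha)\in\tau_{\eta_0}$.
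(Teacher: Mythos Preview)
Your approach is essentially the paper's: both reduce the question to pulling $S_\theta$ back along $j$ and identifying the pullback with either $\emptyset$ (when $\theta\notin\operatorname{ran}(j)$) or $\overline S_\alpha$ (when $\theta=j(\alpha)$). The one point you flagged is indeed the only real issue, and your formulation of it is slightly off: $C=j[\overline\kappa]$ is \emph{not} a club in general, because $j$ can be discontinuous at ordinals of cofinality $\geq\delta=\operatorname{crit}(j)$. What is true is that $j$ is continuous at ordinals of cofinality $<\delta$, so $C$ is $\omega$-closed; equivalently, the closure $\overline C$ is a genuine club with $\overline C\cap\operatorname{cof}(\omega)=C\cap\operatorname{cof}(\omega)$, and since every $S_\theta$ lives inside $\operatorname{cof}(\omega)$ your stationarity transfer goes through with $\overline C$ in place of $C$.

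The paper phrases the same content as: for every club $D\subseteq\eta_0$ there is a club $E\subseteq\overline\kappa$ with $\{j(\varepsilon):\varepsilon\in E,\ \operatorname{cof}(\varepsilon)=\omega\}\subseteq D\cap\operatorname{cof}(\omega)$ (take $E$ to be the set of limit $\varepsilon$ with $\sup j[\varepsilon]\in D$; continuity of $j$ at $\operatorname{cof}(\omega)$ points gives $j(\varepsilon)=\sup j[\varepsilon]\in D$). It then argues the two inclusions $\tau_{\eta_0}\subseteq j[\overline\gamma]$ and $j[\overline\gamma]\subseteq\tau_{\eta_0}$ directly from this, exactly as you do. So once you replace ``$C$ is club'' by ``$\overline C$ is club and agrees with $C$ on $\operatorname{cof}(\omega)$'', your proof and the paper's coincide.
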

\begin{proof}\label{}
Note that \ $\langle \overline{S}_{\alpha}: \alpha<\overline{\gamma}\rangle$
is a partition of \ $cof(\omega)\cap\overline{\kappa}$ into stationary sets. For
\ $\eta<\kappa, cof(\eta)>\omega, \tau_{\eta}=\{\alpha<\gamma: S_{\alpha}\cap\eta$ is stationary in \ $\eta\}$. Note that \ $\eta_0<\kappa$ and \ $cof(\eta_0)=\overline{\kappa}>\omega$.

It is easy to check that for any club \ $C\subseteq \eta_0$ there exists a club \ $D\subseteq \overline{\kappa}$ such that \ $\{j(\varepsilon): \varepsilon\in D, cof(\varepsilon)=\omega\}\subseteq \{\varepsilon\in C: cof(\varepsilon)=\omega\}$.

We first show that \ $\tau_{\eta_0}\subseteq\{j(\alpha): \alpha<\overline{\gamma}\}$.
Suppose \ $\beta\in \tau_{\eta_0}$. Then \ $S_{\beta}\cap\eta_0$ is stationary in \ $\eta_0$. Let \ $C$ be any club in \ $\eta_0$. Then there exists \ $\varepsilon\in C\cap S_{\beta}$. Let \ $\overline{\varepsilon}$ be the preimage of \ $\varepsilon$ under \ $j$. Note that \ $\bigcup_{\alpha<\overline{\gamma}} \overline{S}_{\alpha}=\overline{\kappa}\cap cof(\omega)$. So there exists \ $\alpha<\overline{\gamma}$ such that \ $\overline{\varepsilon}\in \overline{S}_{\alpha}$. Then \ $\varepsilon\in S_{j(\alpha)}$. Since \ $\varepsilon\in S_{j(\alpha)}\cap S_{\beta}, \beta=j(\alpha)$.

Next we show that \ $\{j(\alpha): \alpha<\overline{\gamma}\}\subseteq \tau_{\eta_0}$. Suppose there exists \ $\overline{\alpha}<\overline{\gamma}$ such that \ $j(\overline{\alpha})=\alpha$ but \ $\alpha\notin \tau_{\eta_0}$. Then there exists club \ $C_{\alpha}\subseteq\eta_0$ such that \ $C_{\alpha}\cap S_{\alpha}=\emptyset$. Then there exists a club \ $D_{\alpha}\subseteq \overline{\kappa}$ such that \ $\{j(\varepsilon): \varepsilon\in D_{\alpha}, cof(\varepsilon)=\omega\}\subseteq \{\varepsilon\in C_{\alpha}: cof(\varepsilon)=\omega\}$. Then there exists \ $\zeta\in D_{\alpha}\cap \overline{S}_{\overline{\alpha}}$ such that \ $j(\zeta)\in C_{\alpha}$. So \ $j(\zeta)\in C_{\alpha}\cap S_{\alpha}$ which leads to a contradiction.
\end{proof}

Now we prove the main result Theorem \ref{main result of the paper}. The idea of Theorem \ref{main result of the paper} comes from Theorem \ref{equivalence of supercompact} and proof of Theorem \ref{big thm from woodin first} in Theorem \cite[Theorem 193]{Woodin 2}. We first prove this main result and then give a summary of the proof in the end.

\begin{theorem}\label{main result of the paper}
Suppose $\kappa$ is supercompact and the $\mathsf{HOD}$ Hypothesis holds. Then  for each $\alpha<\kappa$,  there exists $\gamma$ such that $\alpha<\gamma<\kappa$ and $\gamma$ is measurable in $\mathsf{HOD}$.
\end{theorem}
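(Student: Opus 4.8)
The strategy is to combine the new characterization of supercompactness from Theorem \ref{equivalence of supercompact} with the combinatorial machinery of Lemma \ref{isolated lemma}, following the blueprint sketched before the statement. Fix $\alpha<\kappa$. Using the $\mathsf{HOD}$ Hypothesis, first choose $\lambda>\kappa$ which is a limit of regular cardinals that are not $\omega$-strongly measurable in $\mathsf{HOD}$ and with $\mathsf{HOD}\cap \mathsf{V}_{\lambda}=(\mathsf{HOD})^{\mathsf{V}_{\lambda}}$ (such $\lambda$ exist since the class of regular cardinals not $\omega$-strongly measurable in $\mathsf{HOD}$ is proper, and by reflection one can also arrange the correctness of $\mathsf{HOD}$ at $\lambda$). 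Apply Theorem \ref{equivalence of supercompact} with this $\lambda$, the given $\alpha$, and $N=\mathsf{HOD}\cap \mathsf{V}_{\kappa}$: this yields $\kappa_1<\lambda_1<\kappa_2<\lambda_2<\kappa$ and elementary embeddings $\pi_1:\mathsf{V}_{\lambda_1+1}\to\mathsf{V}_{\lambda_2+1}$, $\pi_2:\mathsf{V}_{\lambda_2+1}\to\mathsf{V}_{\lambda+1}$ with $\alpha<\kappa_1=crit(\pi_1)$, $\pi_1(\kappa_1)=\kappa_2$, $\pi_2(\kappa_2)=\kappa$, and the coherence condition $\pi_1(\mathsf{HOD}\cap \mathsf{V}_{\lambda_1})=\mathsf{HOD}\cap \mathsf{V}_{\lambda_2}$, $\pi_2(\mathsf{HOD}\cap \mathsf{V}_{\lambda_2})=\mathsf{HOD}\cap \mathsf{V}_{\lambda}$. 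Set $\pi_3=\pi_2\circ\pi_1:\mathsf{V}_{\lambda_1+1}\to\mathsf{V}_{\lambda+1}$, so $crit(\pi_3)=\kappa_1$, $\alpha<\kappa_1<\kappa$, and $\pi_3(\mathsf{HOD}\cap \mathsf{V}_{\lambda_1})\subseteq\mathsf{HOD}$. The claim will be that $\gamma:=\kappa_1$ is measurable in $\mathsf{HOD}$.

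To show $\kappa_1$ is measurable in $\mathsf{HOD}$, the standard route is to prove that the $\mathsf{HOD}$-ultrafilter derived from $\pi_3$ on $\kappa_1$ is a member of $\mathsf{HOD}$; equivalently, that $\pi_3\upharpoonright(\mathsf{HOD}\cap \mathsf{V}_{\bar\gamma})\in\mathsf{HOD}$ for every $\bar\gamma<\lambda_1$, since then the restriction of $\pi_3$ to $(\mathsf{HOD})^{\mathsf{V}_{\lambda_1}}$ is an $\mathsf{HOD}$-amenable elementary embedding of $\mathsf{HOD}\cap \mathsf{V}_{\lambda_1}$ into $\mathsf{HOD}\cap \mathsf{V}_{\lambda}$ with critical point $\kappa_1$, which suffices to derive a normal measure on $\kappa_1$ inside $\mathsf{HOD}$. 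So fix $\bar\gamma<\lambda_1$ and put $\gamma^*=\pi_3(\bar\gamma)$. Using that $\lambda$ is a limit of regular cardinals not $\omega$-strongly measurable in $\mathsf{HOD}$, pick $\delta$ with $|\mathsf{V}_{\gamma^*+\omega+1}|<\delta<\lambda$, $\delta$ regular, $\delta$ not $\omega$-strongly measurable in $\mathsf{HOD}$. By Definition \ref{strongly measurable} applied with the relevant $\kappa$-parameter at least $|\mathsf{V}_{\gamma^*+\omega}|$ and using $(2^{|\mathsf{V}_{\gamma^*+\omega}|})^{\mathsf{HOD}}<\delta$, there is a partition $\langle S_\xi:\xi<|\mathsf{V}_{\gamma^*+\omega}|\rangle\in\mathsf{HOD}$ of $cof(\omega)\cap\delta$ into stationary sets. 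Now apply Lemma \ref{isolated lemma} to the embedding $\pi_3$ (restricted appropriately so that $\delta$ and the partition lie in its range side) together with this $\mathsf{HOD}$-partition: taking $\eta_0=\sup\{\pi_3(\varepsilon):\varepsilon<\bar\delta\}$ where $\pi_3(\bar\delta)=\delta$, the lemma gives that $\tau_{\eta_0}=\{\pi_3(\xi):\xi<|\mathsf{V}_{\bar\gamma+\omega}|\}$, i.e. the range of $\pi_3$ on $|\mathsf{V}_{\bar\gamma+\omega}|$ is exactly the set of indices $\theta$ with $S_\theta\cap\eta_0$ stationary in $\eta_0$. Since the partition $\langle S_\xi\rangle$ and the ordinal $\eta_0$ are in $\mathsf{HOD}$ (the latter because it is the $\bar\delta$-th point of a canonically definable set, or more carefully because $\eta_0$ is an ordinal), the set $\{\pi_3(\xi):\xi<|\mathsf{V}_{\bar\gamma+\omega}|\}$ is in $\mathsf{HOD}$; from this one recovers the order-isomorphism and hence $\pi_3\upharpoonright|\mathsf{V}_{\bar\gamma+\omega}|\in\mathsf{HOD}$. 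Finally, from $\pi_3(\mathsf{HOD}\cap \mathsf{V}_{\lambda_1})\subseteq\mathsf{HOD}$ together with $\pi_3\upharpoonright|\mathsf{V}_{\bar\gamma+\omega}|\in\mathsf{HOD}$, a standard coding argument (enumerate $\mathsf{HOD}\cap \mathsf{V}_{\bar\gamma}$ inside $\mathsf{HOD}$ using the ordinal $|\mathsf{V}_{\bar\gamma+\omega}|$ worth of indices and read off $\pi_3$ of each element via the image structure) yields $\pi_3\upharpoonright(\mathsf{HOD}\cap \mathsf{V}_{\bar\gamma})\in\mathsf{HOD}$.

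\textbf{Main obstacle.} The delicate point is the correct bookkeeping of the parameter $\delta$ and the partition relative to the two-step embedding: Lemma \ref{isolated lemma} is stated for a single elementary embedding $j$ with $j(\bar\delta)=\delta$ and a partition $\langle S_\alpha\rangle$ living in the target, whereas here $\pi_3$ maps $\mathsf{V}_{\lambda_1+1}$ into $\mathsf{V}_{\lambda+1}$, so one must ensure that $\delta<\lambda$ and the $\mathsf{HOD}$-partition of $cof(\omega)\cap\delta$ are genuinely in the range of $\pi_3$ (equivalently, that $\bar\delta:=\pi_3^{-1}(\delta)<\lambda_1$ and the pulled-back partition is a partition of $cof(\omega)\cap\bar\delta$ into stationary sets with $|\mathsf{V}_{\bar\gamma+\omega}|$ pieces). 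This is where the choice $|\mathsf{V}_{\gamma^*+\omega+1}|<\delta$ and the fact that $\pi_3$ is elementary on $\mathsf{V}_{\lambda_1+1}$ (so it transfers stationarity, the partition structure, and cardinality computations faithfully below $\lambda_1$) are used; one also needs $\delta$ to be not $\omega$-strongly measurable in $\mathsf{HOD}$ precisely to guarantee the existence of the fine partition in $\mathsf{HOD}$. The remaining step, deducing $\pi_3\upharpoonright|\mathsf{V}_{\bar\gamma+\omega}|\in\mathsf{HOD}$ from the $\mathsf{HOD}$-membership of the set $\{\pi_3(\xi):\xi<|\mathsf{V}_{\bar\gamma+\omega}|\}$, is routine once one observes that $\pi_3$ restricted to an ordinal is determined by its range as the unique order isomorphism onto that range, and this is absolute to $\mathsf{HOD}$. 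After establishing $\mathsf{HOD}$-amenability of $\pi_3\upharpoonright(\mathsf{HOD}\cap \mathsf{V}_{\lambda_1})$, the conclusion that $\kappa_1$ carries an $\mathsf{HOD}$-normal measure, hence is measurable in $\mathsf{HOD}$, is immediate; since $\alpha$ was arbitrary below $\kappa$, this gives the desired proper class (below $\kappa$) of regular cardinals measurable in $\mathsf{HOD}$.
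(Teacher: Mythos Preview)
Your proposal is correct and follows essentially the same approach as the paper: apply Theorem \ref{equivalence of supercompact} with $N=\mathsf{HOD}\cap \mathsf{V}_\kappa$ to obtain $\pi_3=\pi_2\circ\pi_1$ with $\pi_3(\mathsf{HOD}\cap \mathsf{V}_{\lambda_1})\subseteq\mathsf{HOD}$, then use Lemma \ref{isolated lemma} with a $\mathsf{HOD}$-partition at a suitable $\delta<\lambda$ to recover $\pi_3\upharpoonright|\mathsf{V}_{\bar\gamma+\omega}|\in\mathsf{HOD}$, and finish with the coding argument via an $\mathsf{HOD}$-surjection $j$ onto $\mathsf{HOD}\cap \mathsf{V}_{\bar\gamma}$. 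One minor slip: Theorem \ref{equivalence of supercompact} with $N=\mathsf{HOD}\cap \mathsf{V}_\kappa$ only gives $\pi_2(\mathsf{HOD}\cap \mathsf{V}_{\lambda_2})=N\cap \mathsf{V}_\lambda=\mathsf{HOD}\cap \mathsf{V}_\kappa$ (not $\mathsf{HOD}\cap \mathsf{V}_\lambda$, since $\kappa<\lambda$), but this is harmless because you only ever use $\pi_3(\mathsf{HOD}\cap \mathsf{V}_{\lambda_1})\subseteq\mathsf{HOD}$.
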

\begin{proof}\label{}
Fix $\alpha<\kappa$. Take $\lambda>\kappa$ such that $|\mathsf{V}_{\lambda}| =\lambda$, $\lambda$ is a limit of regular cardinals which are not $\omega$-strongly measurable in $\mathsf{HOD}$ and $\mathsf{HOD}\cap \mathsf{V}_{\lambda}=\mathsf{HOD}^{\mathsf{V}_{\lambda}}$. Let $N=\mathsf{HOD}\cap \mathsf{V}_{\kappa}$. By Theorem \ref{equivalence of supercompact},  there exist  $\kappa_1 < \lambda_1 < \kappa_2 < \lambda_2 < \kappa$, and elementary embeddings $\pi_1: \mathsf{V}_{\lambda_1+1} \to \mathsf{V}_{\lambda_2+1}$ and  $\pi_2: \mathsf{V}_{\lambda_2+1} \to \mathsf{V}_{\lambda+1}$ such that $crit(\pi_1)=\kappa_1, \alpha<\kappa_1$, $\pi_1(\mathsf{HOD}\cap \mathsf{V}_{\lambda_1}) = \mathsf{HOD} \cap \mathsf{V}_{\lambda_2}$ and $\pi_2(\mathsf{HOD}\cap \mathsf{V}_{\lambda_2}) = \mathsf{HOD} \cap \mathsf{V}_{\kappa}\cap \mathsf{V}_{\lambda}=\mathsf{HOD} \cap \mathsf{V}_{\kappa}$. Let $\pi_3=\pi_2\circ\pi_1$. We want to show that $\kappa_1$ is measurable in $\mathsf{HOD}$. Since $crit(\pi_3)=\kappa_1$, it suffices to show that for any $\overline{\gamma}<\lambda_1, \pi_3\upharpoonright (\mathsf{HOD}\cap \mathsf{V}_{\overline{\gamma}})\in \mathsf{HOD}$.

Suppose $\pi_3(\overline{\gamma})=\gamma$. Take  $\delta>|\mathsf{V}_{\gamma+\omega+1}|$ such that $\delta<\lambda$ and $\delta$ is not $\omega$-strongly measurable in $\mathsf{HOD}$. Let $S^{\delta}_{\omega}=\lbrace\alpha<\delta\mid cf(\alpha)=\omega\rbrace$.  Since  $\delta$ is not $\omega$-strongly measurable in $\mathsf{HOD}$, there exists $\langle S_{\alpha}\mid\alpha< |\mathsf{V}_{\gamma+\omega}|\rangle\in \mathsf{HOD}$ which is  a  partition of $S^{\delta}_{\omega}$ into  stationary sets in $\delta$. Let $a=\langle\gamma, \delta,  (S_{\alpha}\mid\alpha< |\mathsf{V}_{\gamma+\omega}|)\rangle$. Note that $a\in \mathsf{V}_{\lambda}\cap \mathsf{HOD}$. Since $\mathsf{HOD}\cap \mathsf{V}_{\lambda}=\mathsf{HOD}^{\mathsf{V}_{\lambda}}, a$ is definable in $\mathsf{V}_{\lambda}$. Let  $\pi_3(\overline{\delta})=\delta$ and $\pi_3(\langle \overline{S_{\alpha}}\mid\alpha< |\mathsf{V}_{\overline{\gamma}+\omega}|\rangle)=\langle S_{\alpha}\mid\alpha< |\mathsf{V}_{\gamma+\omega}|\rangle$.  Then $\langle \overline{S_{\alpha}}\mid\alpha<  |\mathsf{V}_{\overline{\gamma}+\omega}|\rangle\in \mathsf{HOD}^{\mathsf{V}_{\lambda_1}}$ is a partition of $S^{\overline{\delta}}_{\omega}$  into  stationary sets in $\overline{\delta}$. For $\eta<\overline{\delta}$ such that $cf(\eta)>\omega$, let $\sigma_{\eta}=\lbrace \alpha< |\mathsf{V}_{\overline{\gamma}+\omega}|: \overline{S_{\alpha}}\cap\eta$ is stationary in $\eta\rbrace$. Note that $\langle \sigma_{\eta}: \eta<\overline{\delta}, cf(\eta)>\omega\rangle\in \mathsf{HOD}^{\mathsf{V}_{\lambda_1}}$. Let $\langle \tau_{\eta}: \eta<\delta, cf(\eta)>\omega\rangle=\pi_3(\langle \sigma_{\eta}: \eta<\overline{\delta}, cf(\eta)>\omega\rangle)$. For each  $\eta<\delta$ such that  $cf(\eta)>\omega, \tau_{\eta}=\lbrace \alpha< |\mathsf{V}_{\gamma+\omega}|:  S_{\alpha}\cap\eta$ is stationary in $\eta\rbrace$. Then $\langle \tau_{\eta}: \eta<\delta, cf(\eta)>\omega\rangle\in \mathsf{HOD}^{\mathsf{V}_{\lambda}}=\mathsf{HOD}\cap \mathsf{V}_{\lambda}$. Let $\eta_0=sup\lbrace \pi_3(\xi)\mid\xi< \overline{\delta}\rbrace$. By Lemma \ref{isolated lemma}, we have $\tau_{\eta_0}=\{\pi_3(x): x\in |\mathsf{V}_{\overline{\gamma}+\omega}|\}$.

To show that $\pi_3\upharpoonright (\mathsf{HOD}\cap \mathsf{V}_{\overline{\gamma}})\in \mathsf{HOD}$, it suffices to show that $\lbrace \pi_3(x): x\in \mathsf{HOD}\cap \mathsf{V}_{\overline{\gamma}} \rbrace\in \mathsf{HOD}$. Take $j\in \mathsf{HOD}\cap  \mathsf{V}_{\lambda_1}$ such that $j: \theta\rightarrow \mathsf{HOD}\cap \mathsf{V}_{\overline{\gamma}}$ is a surjection for some $\theta< |\mathsf{V}_{\overline{\gamma}+\omega}|$. For $x\in \mathsf{HOD}\cap \mathsf{V}_{\overline{\gamma}}, \pi_3(x)=\pi_3(j(\xi))=\pi_3(j)(\pi_3(\xi))$ for some $\xi\in\theta$. Since $\tau_{\eta_0}\in \mathsf{HOD}$, we have $\pi_3\upharpoonright |\mathsf{V}_{\overline{\gamma}+\omega}| \in \mathsf{HOD}$. Since $j\in \mathsf{HOD}\cap  \mathsf{V}_{\lambda_1}$, $\pi_1(\mathsf{HOD}\cap \mathsf{V}_{\lambda_1}) = \mathsf{HOD} \cap \mathsf{V}_{\lambda_2}$ and $\pi_2(\mathsf{HOD}\cap \mathsf{V}_{\lambda_2}) = \mathsf{HOD} \cap \mathsf{V}_{\kappa}$, we have  $\pi_3(j)=\pi_2(\pi_1(j))\in \mathsf{HOD}$. Since $\pi_3(j)\in \mathsf{HOD}$ and  $\pi_3\upharpoonright |\mathsf{V}_{\overline{\gamma}+\omega}| \in \mathsf{HOD}$, we have $\pi_3\upharpoonright (\mathsf{HOD}\cap \mathsf{V}_{\overline{\gamma}})\in \mathsf{HOD}$.
\end{proof}

There are four key points in the proof of Theorem \ref{main result of the paper}:  (1) from Theorem \ref{equivalence of supercompact} we get an embedding $\pi_3: \mathsf{V}_{\lambda_1+1} \to \mathsf{V}_{\lambda+1}$  such that $crit(\pi_3)=\kappa_1$ and $\pi_3(\mathsf{HOD}\cap \mathsf{V}_{\lambda_1}) \subseteq \mathsf{HOD}$; (2) from the $\mathsf{HOD}$ Hypothesis, we can get a partition $\langle S_{\alpha}\mid\alpha< |\mathsf{V}_{\gamma+\omega}|\rangle$ in $\mathsf{HOD}$ of $\delta$ into stationary subsets; (3) from Lemma \ref{isolated lemma}, $\tau_{\eta_0}=\{\pi_3(x): x\in |\mathsf{V}_{\overline{\gamma}+\omega}|\}$, and from $\langle S_{\alpha}\mid\alpha< |\mathsf{V}_{\gamma+\omega}|\rangle\in\mathsf{HOD}$, we can show that $\pi_3\upharpoonright |\mathsf{V}_{\overline{\gamma}+\omega}| \in \mathsf{HOD}$; (4) from $\pi_3(\mathsf{HOD}\cap \mathsf{V}_{\lambda_1}) \subseteq \mathsf{HOD}$ and   $\pi_3\upharpoonright |\mathsf{V}_{\overline{\gamma}+\omega}| \in \mathsf{HOD}$, by a standard argument, we can show that $\pi_3\upharpoonright (\mathsf{HOD}\cap \mathsf{V}_{\overline{\gamma}})\in \mathsf{HOD}$.

\begin{corollary}\label{the main theorem}
Suppose $\kappa$ is supercompact and the $\mathsf{HOD}$ Hypothesis holds. Then $\mathsf{V}_{\kappa}\models$ there is a proper class of regular cardinals which are measurable in $\mathsf{HOD}$.
\end{corollary}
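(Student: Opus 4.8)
The plan is to read this off from Theorem~\ref{main result of the paper} via a routine reflection argument. First I would invoke the standard fact that a supercompact cardinal $\kappa$ is $\Sigma_{2}$-reflecting, so that $\mathsf{V}_{\kappa}\prec_{\Sigma_{2}}\mathsf{V}$; in particular $\kappa$ is inaccessible (so $\mathsf{V}_{\kappa}\models\mathsf{ZFC}$) and, as $\mathsf{HOD}$ is $\Sigma_{2}$-definable, $\mathsf{HOD}^{\mathsf{V}_{\kappa}}=\mathsf{HOD}\cap\mathsf{V}_{\kappa}$. This is the same situation already used at $\lambda$ inside the proof of Theorem~\ref{main result of the paper}, now applied at $\kappa$ itself. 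Consequently $\mathsf{HOD}^{\mathsf{V}_{\kappa}}$ and $\mathsf{HOD}$ have exactly the same sets of rank below $\kappa$; in particular, for each $\gamma<\kappa$ they compute the same $\mathcal{P}(\gamma)$ and, for any common element $\mathcal{U}$, agree on whether $\mathcal{U}$ is a $\gamma$-complete nonprincipal ultrafilter on $\gamma$, since that assertion only refers to subsets of $\gamma$ and to sequences of length less than $\gamma$ of such subsets.

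Next, fix an arbitrary $\alpha<\kappa$. By Theorem~\ref{main result of the paper} there is $\gamma$ with $\alpha<\gamma<\kappa$ that is measurable in $\mathsf{HOD}$, say witnessed by $\mathcal{U}\in\mathsf{HOD}$, a normal $\mathsf{HOD}$-ultrafilter on $\gamma$. By the construction in the proof of that theorem, $\gamma$ is moreover the critical point of a nontrivial elementary embedding between set-sized rank initial segments of $\mathsf{V}$, hence $\gamma$ is measurable --- and in particular a regular cardinal --- in $\mathsf{V}$; therefore $\gamma$ is also a regular cardinal in $\mathsf{V}_{\kappa}$, since cardinals and cofinalities below the inaccessible $\kappa$ are absolute to $\mathsf{V}_{\kappa}$. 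Since $\mathcal{U}$ is a set of subsets of $\gamma$ and $\gamma<\kappa$, we have $\mathcal{U}\in\mathsf{V}_{\kappa}$, so $\mathcal{U}\in\mathsf{HOD}\cap\mathsf{V}_{\kappa}=\mathsf{HOD}^{\mathsf{V}_{\kappa}}$; and by the agreement just noted, $\mathsf{HOD}^{\mathsf{V}_{\kappa}}$ satisfies that $\mathcal{U}$ is a $\gamma$-complete nonprincipal ultrafilter on $\gamma$ precisely because $\mathsf{HOD}$ does. Hence $\mathsf{V}_{\kappa}\models$ ``$\gamma$ is a regular cardinal that is measurable in $\mathsf{HOD}$''.

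Finally, since $\alpha<\kappa$ was arbitrary and $\mathrm{Ord}\cap\mathsf{V}_{\kappa}=\kappa$, the class of ordinals that $\mathsf{V}_{\kappa}$ regards as regular cardinals measurable in $\mathsf{HOD}$ is unbounded in $\kappa$, i.e.\ a proper class of $\mathsf{V}_{\kappa}$, which is the desired conclusion. I do not expect a genuine obstacle in this argument; the only point requiring any care is the reflection step --- that $\mathsf{HOD}^{\mathsf{V}_{\kappa}}=\mathsf{HOD}\cap\mathsf{V}_{\kappa}$, and hence that $\mathsf{HOD}^{\mathsf{V}_{\kappa}}$ agrees with $\mathsf{HOD}$ on $\mathcal{P}(\gamma)$ and on $\gamma$-completeness --- which is a standard consequence of $\kappa$ being $\Sigma_{2}$-reflecting.
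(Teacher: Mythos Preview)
Your argument is correct and is exactly the routine derivation the paper has in mind: the paper states this corollary with no proof, treating it as immediate from Theorem~\ref{main result of the paper}, and your reflection/absoluteness argument (using $\mathsf{V}_{\kappa}\prec_{\Sigma_2}\mathsf{V}$ to get $\mathsf{HOD}^{\mathsf{V}_\kappa}=\mathsf{HOD}\cap\mathsf{V}_\kappa$, then transferring the witnessing ultrafilter) is precisely how one fills in the details. Your observation that the $\gamma$ produced in the proof of Theorem~\ref{main result of the paper} is a critical point in $\mathsf{V}$, hence genuinely regular in $\mathsf{V}$ and in $\mathsf{V}_\kappa$, is the one nontrivial point, and you handle it correctly.
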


The following Local University Theorem follows from proof of Theorem \ref{main result of the paper}, and is a reformulation of Woodin's original version announced in \cite{Woodin 5}.\footnote{I would like to thank Woodin for the communication with the author  about his work on Local Universality Theorem.}

\begin{corollary}\label{Local Universality Theorem}
(Woodin, Local Universality Theorem)\quad Suppose $\kappa$ is supercompact and the $\mathsf{HOD}$ Hypothesis holds. Then for each $\alpha<\kappa$, there exists an elementary embedding $j: \mathsf{V}_{\lambda+1}\rightarrow \mathsf{V}_{j(\lambda)+1}$ such that
\begin{enumerate}[(1)]
  \item $crit(j)=\overline{\kappa}$, $\alpha<\overline{\kappa}<\lambda<\kappa$ and $j(\lambda)<\kappa$;
  \item  $j\upharpoonright (\mathsf{HOD}\cap \mathsf{V}_{\lambda})\in \mathsf{HOD}$ and
  \item $j(\mathsf{HOD}\cap \mathsf{V}_{\lambda})=\mathsf{HOD}\cap \mathsf{V}_{j(\lambda)}$.
\end{enumerate}
\end{corollary}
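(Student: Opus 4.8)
\emph{Proof proposal.} The plan is to read the embedding $j$ directly off the proof of Theorem~\ref{main result of the paper}, rather than just the measurability it yields. Fix $\alpha<\kappa$ and run that proof: applying Theorem~\ref{equivalence of supercompact} to $N=\mathsf{HOD}\cap\mathsf{V}_\kappa$ and to a $\lambda>\kappa$ chosen as there (so $|\mathsf{V}_\lambda|=\lambda$, $\lambda$ is a strong limit which is a limit of regular cardinals not $\omega$-strongly measurable in $\mathsf{HOD}$, and $\mathsf{HOD}\cap\mathsf{V}_\lambda=(\mathsf{HOD})^{\mathsf{V}_\lambda}$), it produces $\kappa_1<\lambda_1<\kappa_2<\lambda_2<\kappa$ and elementary embeddings $\pi_1\colon\mathsf{V}_{\lambda_1+1}\to\mathsf{V}_{\lambda_2+1}$, $\pi_2\colon\mathsf{V}_{\lambda_2+1}\to\mathsf{V}_{\lambda+1}$ with $\alpha<\kappa_1=crit(\pi_1)$, $\pi_1(\kappa_1)=\kappa_2=crit(\pi_2)$, $\pi_2(\kappa_2)=\kappa$, $\pi_1(\mathsf{HOD}\cap\mathsf{V}_{\lambda_1})=\mathsf{HOD}\cap\mathsf{V}_{\lambda_2}$ and $\pi_2(\mathsf{HOD}\cap\mathsf{V}_{\lambda_2})=\mathsf{HOD}\cap\mathsf{V}_\kappa$; write $\pi_3=\pi_2\circ\pi_1$. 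I would then fix any $\overline\gamma$ with $\kappa_1<\overline\gamma<\lambda_1$ (say $\overline\gamma=\kappa_1+\omega$) and put $j:=\pi_1\upharpoonright\mathsf{V}_{\overline\gamma+1}$, which is an elementary embedding $\mathsf{V}_{\overline\gamma+1}\to\mathsf{V}_{\pi_1(\overline\gamma)+1}$ (the satisfaction relation of $\mathsf{V}_{\overline\gamma+1}$ belongs to $\mathsf{V}_{\lambda_1+1}$, on which $\pi_1$ is elementary), with $\overline\kappa:=\kappa_1$ and with $\overline\gamma$ playing the role of ``$\lambda$''. Clause~(1) is immediate since $j(\overline\gamma)=\pi_1(\overline\gamma)<\lambda_2<\kappa$; clause~(3) follows from $\pi_1(\mathsf{HOD}\cap\mathsf{V}_{\lambda_1})=\mathsf{HOD}\cap\mathsf{V}_{\lambda_2}$, since by elementarity of $\pi_1$ one gets $\pi_1(\mathsf{HOD}\cap\mathsf{V}_{\overline\gamma})=\pi_1(\mathsf{HOD}\cap\mathsf{V}_{\lambda_1})\cap\mathsf{V}_{\pi_1(\overline\gamma)}=\mathsf{HOD}\cap\mathsf{V}_{\pi_1(\overline\gamma)}$.

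So the whole content is clause~(2): $\pi_1\upharpoonright(\mathsf{HOD}\cap\mathsf{V}_{\overline\gamma})\in\mathsf{HOD}$. The proof of Theorem~\ref{main result of the paper} already supplies, for this $\overline\gamma$, that $\pi_3\upharpoonright(\mathsf{HOD}\cap\mathsf{V}_{\overline\gamma})\in\mathsf{HOD}$, that $\pi_3\upharpoonright|\mathsf{V}_{\overline\gamma+\omega}|\in\mathsf{HOD}$ (it is the increasing enumeration of the set $\tau_{\eta_0}$ of Lemma~\ref{isolated lemma}), and that $\pi_1(j')\in\mathsf{HOD}$ for every $j'\in\mathsf{HOD}\cap\mathsf{V}_{\lambda_1}$ (as $\pi_1(j')\in\pi_1(\mathsf{HOD}\cap\mathsf{V}_{\lambda_1})=\mathsf{HOD}\cap\mathsf{V}_{\lambda_2}$). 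What is missing is the ``ordinal part'' $\pi_1\upharpoonright|\mathsf{V}_{\overline\gamma+\omega}|\in\mathsf{HOD}$, and the idea is to obtain it by rerunning the argument of Theorem~\ref{main result of the paper} with $\pi_2$ in place of $\pi_3$ and the ordinal $\overline\gamma':=\pi_1(\overline\gamma)$ in place of $\overline\gamma$. Here $\kappa_2=crit(\pi_2)<\overline\gamma'<\lambda_2$ and $\pi_2(\kappa_2)=\kappa<|\mathsf{V}_{\overline\gamma'+\omega}|$ (because $\overline\gamma'>\kappa_2$), so Lemma~\ref{isolated lemma} is applicable to $\pi_2$; and, crucially, the range of $\pi_2$ reaches up to $\mathsf{V}_\lambda$, so choosing a regular $\delta'$ not $\omega$-strongly measurable in $\mathsf{HOD}$ with $|\mathsf{V}_{\pi_2(\overline\gamma')+\omega+1}|<\delta'<\lambda$ and, by the $\mathsf{HOD}$ Hypothesis, a partition of $S^{\delta'}_\omega$ into $|\mathsf{V}_{\pi_2(\overline\gamma')+\omega}|$ stationary sets, both taken canonically so as to be definable in $\mathsf{V}_\lambda$ from $\pi_2(\overline\gamma')$, the partition and the derived sequence lie in $(\mathsf{HOD})^{\mathsf{V}_\lambda}=\mathsf{HOD}\cap\mathsf{V}_\lambda\subseteq\mathsf{HOD}$; exactly as in Theorem~\ref{main result of the paper} this yields $\pi_2\upharpoonright|\mathsf{V}_{\overline\gamma'+\omega}|\in\mathsf{HOD}$. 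Since $\pi_3=\pi_2\circ\pi_1$ and $\pi_2$ is injective, for $\xi<|\mathsf{V}_{\overline\gamma+\omega}|$ one has $\pi_1(\xi)<|\mathsf{V}_{\overline\gamma'+\omega}|$ and $\pi_1(\xi)$ is the unique $\mu$ with $\pi_2(\mu)=\pi_3(\xi)$; hence $\pi_1\upharpoonright|\mathsf{V}_{\overline\gamma+\omega}|$ is computable in $\mathsf{HOD}$ from $\pi_2\upharpoonright|\mathsf{V}_{\overline\gamma'+\omega}|$ and $\pi_3\upharpoonright|\mathsf{V}_{\overline\gamma+\omega}|$, so $\pi_1\upharpoonright|\mathsf{V}_{\overline\gamma+\omega}|\in\mathsf{HOD}$. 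Finally, as in the main theorem, taking $j'\in\mathsf{HOD}\cap\mathsf{V}_{\lambda_1}$ a surjection from some $\theta<|\mathsf{V}_{\overline\gamma+\omega}|$ onto $\mathsf{HOD}\cap\mathsf{V}_{\overline\gamma}$, one has $\pi_1(x)=\pi_1(j')(\pi_1(\xi))$ for $x=j'(\xi)$, so $\pi_1\upharpoonright(\mathsf{HOD}\cap\mathsf{V}_{\overline\gamma})$ is definable in $\mathsf{HOD}$ from the $\mathsf{HOD}$-parameters $j'$, $\pi_1(j')$ and $\pi_1\upharpoonright\theta$, which gives clause~(2).

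I expect the main difficulty to be precisely this detour through $\pi_2$. One cannot apply the argument of Theorem~\ref{main result of the paper} to $\pi_1$ directly, because $\pi_1$ maps into $\mathsf{V}_{\lambda_2+1}$ only and we have no control over $(\mathsf{HOD})^{\mathsf{V}_{\lambda_2}}$, so the stationary partition feeding Lemma~\ref{isolated lemma} cannot be guaranteed to be ordinal definable and to lie in the range of $\pi_1$; one is forced to route the ordinal part of $\pi_1$ through $\pi_2$ (whose range does reach $\mathsf{V}_\lambda$, where the $\mathsf{HOD}$ Hypothesis is usable) via the identity $\pi_3=\pi_2\circ\pi_1$. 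The remaining points are routine bookkeeping: making the choices of $\delta'$ and of the partition ordinal definable in $\mathsf{V}_\lambda$ from $\pi_2(\overline\gamma')$ so that $\pi_2\upharpoonright|\mathsf{V}_{\overline\gamma'+\omega}|$ is genuinely an element of $\mathsf{HOD}$, and noting that the hypothesis of Lemma~\ref{isolated lemma} forces $\overline\gamma>\kappa_1$ (equivalently $\overline\gamma'>\kappa_2$), which costs nothing since the statement only asks for $\overline\kappa<\lambda<\kappa$.
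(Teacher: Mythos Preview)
Your proposal is correct. The paper itself gives no proof of this corollary beyond the remark that it ``follows from proof of Theorem~\ref{main result of the paper}''; your argument supplies exactly the missing step. The proof of Theorem~\ref{main result of the paper} only yields $\pi_3\upharpoonright(\mathsf{HOD}\cap\mathsf{V}_{\overline\gamma})\in\mathsf{HOD}$, and since $\pi_3$ lands above $\kappa$ it cannot serve as the required $j$; your observation that the same stationary--partition argument (Lemma~\ref{isolated lemma}) can be run for $\pi_2$ (because $\pi_2$ also ranges into $\mathsf{V}_\lambda$, where $\mathsf{HOD}\cap\mathsf{V}_\lambda=(\mathsf{HOD})^{\mathsf{V}_\lambda}$ and the $\mathsf{HOD}$ Hypothesis is usable), and that $\pi_1\upharpoonright|\mathsf{V}_{\overline\gamma+\omega}|$ then follows from the factorization $\pi_3=\pi_2\circ\pi_1$, is precisely what is needed to land below $\kappa$. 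Your diagnosis of why one cannot run Lemma~\ref{isolated lemma} for $\pi_1$ directly is also on point: there is no control over $(\mathsf{HOD})^{\mathsf{V}_{\lambda_2}}$, so the required $\mathsf{HOD}$ partition need not lie in $\mathrm{ran}(\pi_1)$. One small remark: since $\pi_2(\overline\gamma')=\pi_3(\overline\gamma)=\gamma$, you may in fact reuse the \emph{same} $\delta$ and partition $\langle S_\alpha\rangle$ for both the $\pi_3$-- and $\pi_2$--applications of Lemma~\ref{isolated lemma}, which slightly streamlines the bookkeeping.
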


From \cite{Woodin 5}, Woodin essentially proved that if $\delta$ is $N$-supercompact and $N$ is a weak extender model for $\delta$-supercompact, then any measurable cardinal $\kappa\geq\delta$ is measurable in $N$. As a corollary, if $\delta$ is $\mathsf{HOD}$-supercompact and the $\mathsf{HOD}$ Hypothesis holds, then any measurable cardinal $\kappa\geq\delta$ is measurable in $\mathsf{HOD}$. Comparing  this result with Theorem \ref{main result of the paper}, and Global Universality Theorem with Local Universality Theorem, we can see the huge difference between $\mathsf{HOD}$-supercompact cardinals and supercompact cardinals under the $\mathsf{HOD}$ Hypothesis even if $\mathsf{HOD}$-supercompact cardinals and supercompact cardinals seem to be close in the large cardinal hierarchy: under the assumption of the $\mathsf{HOD}$ Hypothesis and $\mathsf{HOD}$-supercompact cardinals, large cardinals in $\mathsf{V}$ are reflected to be large cardinals in $\mathsf{HOD}$ in a global way; however, under the assumption of the $\mathsf{HOD}$ Hypothesis and supercompact cardinals, large cardinals in $\mathsf{V}$ are reflected to be large cardinals in $\mathsf{HOD}$ in a local way.

\section{Questions}

Theorem \ref{big thm from woodin in SEM} and Theorem \ref{equi be HC and MC} have established the equivalence of the $\mathsf{HOD}$ Hypothesis under the assumption of $\mathsf{HOD}$-supercompact cardinals and extendible cardinals. A natural question is whether we can establish the equivalence of the $\mathsf{HOD}$ Hypothesis only assuming supercompact cardinals. Especially, if $\kappa$ is supercompact, whether the $\mathsf{HOD}$ Hypothesis is the equivalent to the statement: for each $\alpha<\kappa$,  there exists $\gamma$ such that $\alpha<\gamma<\kappa$ and $\gamma$ is measurable in $\mathsf{HOD}$.\footnote{I would like to thank the referee for pointing out this question to me.} It seems for me the $\mathsf{HOD}$ Hypothesis expresses the global property of large cardinals in $\mathsf{HOD}$. In this paper, we prove the forward direction. I conjecture that the backward direction does not hold.  The difficulty in proving this conjecture comes from the difficulty in forcing the failure of the $\mathsf{HOD}$ Hypothesis. Under the assumption of only supercompactness, as far as we know, we do not know any equivalence of the $\mathsf{HOD}$ Hypothesis. Woodin conjectured in \cite{Woodin 5} that if $\delta$ is supercompact then the $\mathsf{HOD}$ Hypothesis is equivalent to the existence of a weak extender model $N$ for $\delta$ supercompact such that $N \subseteq \mathsf{HOD}$, which as far as we know is an open problem.

\end{document}